\title{Weak coupling for Schrödinger operators with complex potentials}
\author{Jussi Behrndt}
\author{Markus Holzmann}
\author{Petr Siegl}
\author{Nicolas Weber}
\address{Institute of Applied Mathematics, Graz University of Technology, Steyrergasse 30, 8010 Graz, Austria}
\email{behrndt@tugraz.at, holzmann@tugraz.at, siegl@tugraz.at, nicolas.weber@tugraz.at}
\setlist[enumerate]{topsep=3pt, itemsep=3pt, leftmargin=*}
\setlist[itemize]{topsep=3pt, itemsep=3pt}
\setlist[enumerate,1]{label={\upshape(\roman*)}}
\newcommand{\Hbeta}{H_\beta}
\newcommand{\KernelM}[1]{\calM^{(#1)}}
\newcommand{\KernelL}[1]{\calL^{(#1)}}
\newcommand{\KernelQ}[1]{\calQ^{(#1)}}
\newcommand{\ResolventKernel}[1]{\vphantom{R^2}\smash{\calG^{(#1)}}}
\newcommand{\errorR}[1]{\calJ^{(#1)}} % error integral 
\newcommand{\funcg}[1]{g_1}
\newcommand{\funch}[1]{g_2}
\newcommand{\funcU}{\calU}
\newcommand{\fbeta}[1]{\Lambda_{\beta}}
\newcommand{\feps}[1]{\Lambda_{\epsilon}}
\newcommand{\Rollnik}[2]{
	\def\arg{#1}\def\one{1}
	\ifx\arg\one
		\calR_{#2}(\R)
	\else
		\calR_{#2}(\R^{#1})
	\fi
}
\newcommand{\dmath}{\mathop{}\!\mathrm{d}}
\newcommand{\dx}{\dmath x}
\newcommand{\dy}{\dmath y}
\newcommand{\dt}{\dmath t}
\newcommand{\R}{\mathbb{R}}
\newcommand{\C}{\mathbb{C}}
\renewcommand{\Im}{\operatorname{Im}}
\renewcommand{\Re}{\operatorname{Re}}
\newcommand{\dom}{\operatorname{dom}}
\newcommand{\kernel}{\operatorname{ker}}
\newcommand{\ran}{\operatorname{ran}}
\renewcommand{\dim}{\operatorname{dim}}
\newcommand{\codim}{\operatorname{codim}}
\newcommand{\argorempty}[1]{\def\arg{#1}\ifx\arg\empty \,\cdot\, \else #1 \fi}
\def\tmpsmall{small}
\def\tmpbig{big}
\def\tmpBig{Big}
\newcommand{\spf}[3][]{\def\arg{#1}
	\ifx
		\arg\tmpsmall ( \argorempty{#2}, \argorempty{#3} )
	\else
		\ifx
			\arg\tmpbig \big( \argorempty{#2}, \argorempty{#3} \big)
		\else
			\ifx
				\arg\tmpBig \Big( \argorempty{#2}, \argorempty{#3} \Big)
			\else
				\left( \argorempty{#2}, \argorempty{#3} \right) 
			\fi
		\fi
	\fi
}
\newcommand{\norm}[2][]{\def\arg{#1}
	\ifx\arg\tmpsmall 
		\lVert #2 \rVert
	\else
		\ifx\arg\tmpbig 
			\big\lVert #2 \big\rVert 
		\else 
			\ifx\arg\tmpBig 
				\Big\lVert #2 \Big\rVert
			\else
				\left\lVert \argorempty{#2} \right\rVert
			\fi
		\fi
	\fi
}
\newcommand{\abs}[2][]{\def\arg{#1}
	\ifx\arg\tmpsmall\vert{#2}\vert
	\else
		\ifx\arg\tmpbig 
			\big\vert{#2}\big\vert
		\else 
			\ifx\arg\tmpBig 
				\Big\vert{#2}\Big\vert
			\else
				\left\lvert\argorempty{#2}\right\rvert
			\fi
		\fi
	\fi
}
\newcommand{\disc}[2]{D(#1;#2)}
\newcommand{\discBig}[2]{D\Big( #1; \: #2 \Big)}
\newcommand{\logabs}[1]{\abs[big]{\log\abs{#1}}}
\newcommand{\spacer}[2]{\phantom{#2}\text{#1}\phantom{#2}}
\newcommand{\calG}{\mathcal{G}}
\newcommand{\calH}{\mathcal{H}}
\newcommand{\calI}{\mathcal{I}}
\newcommand{\calJ}{\mathcal{J}}
\newcommand{\calL}{\mathcal{L}}
\newcommand{\calM}{\mathcal{M}}
\newcommand{\calN}{\mathcal{N}}
\newcommand{\calO}{\mathcal{O}}
\newcommand{\calQ}{\mathcal{Q}}
\newcommand{\calR}{\mathcal{R}}
\newcommand{\calU}{\mathcal{U}}
\let\temp\phi
\let\phi\varphi
\let\varphi\temp
\let\temp\epsilon
\let\epsilon\varepsilon
\let\varepsilon\temp
\let\originalleft\left
\let\originalright\right
\renewcommand{\left}{\mathopen{}\mathclose\bgroup\originalleft}
\renewcommand{\right}{\aftergroup\egroup\originalright}
\theoremstyle{plain}
\newtheorem{theorem}{Theorem}[section]
\newtheorem{proposition}[theorem]{Proposition}
\newtheorem{lemma}[theorem]{Lemma}
\newtheorem{corollary}[theorem]{Corollary}
\theoremstyle{definition}
\newtheorem{remark}[theorem]{Remark}
\newtheorem{example}[theorem]{Example}
\newcommand{\const}{R}
\newcommand{\constcor}{R'}
\begin{document}

\subjclass[2020]{Primary: 35J10, 47A10, 47A75, 47B28, 47F05; Secondary: 34L05, 47A55, 47E05}

\keywords{Bound state, weak coupling, asymptotic expansion, Birman-Schwinger principle, complex-valued potential, non-self-adjoint Schrödinger operator}

\date{\today}

\begin{abstract}
We study the discrete eigenvalues emerging from the threshold of
the essential spectrum of one or two-dimensional Schrödinger operators with complex-valued
$ L^p $-potentials in a weak coupling regime.
We derive necessary and sufficient conditions on the potential for the existence or absence
of discrete eigenvalues in this regime and also analyze their uniqueness and algebraic multiplicity.
Our results can be viewed as natural non-self-adjoint extensions of the well-known classical weak coupling phenomenon for self-adjoint Schrödinger operators with real-valued potentials going back half a century to Simon's famous paper \cite{Simon1976}.
\end{abstract}

\maketitle

\section{Introduction and main results}

The analysis of Schrödinger operators
\begin{align*}
H=    -\Delta + V
   \spacer{in}{\quad}
   L^2(\R^d),
\end{align*}
with a complex-valued potential $ V : \R^d \rightarrow \C $ is challenging due to a fundamental lack of variational methods, a spectral theorem, and
related self-adjoint tools. The properties of $H$ are not fully understood in spite of a high research activity in the last two decades, see, e.g., the monographs~\cite{Davies-2007,Helffer-2013,Sjoestrand-2019-14} on non-self-adjoint (differential) operators and a collection of recent papers  \cite{AbramovAslanyianDavies2001,Boegli-2016-352, Boegli-2021-11, Boegli-2022-398, Cuenin-2019-376, Cuenin-2022-392, Davies-2002-148, Demuth-2009-257, Demuth-2013-232, Enblom-2015-106, Faupin-2023-6, Fanelli-2018-8, Frank-2006-77, Frank2011, Frank2017, Frank2018, Hansmann-2011-98, HansmannKrejcirik2022,Laptev-2009-292,Someyama-2019-83,Stepin-2014-89,Wang-2011-96} on spectral properties
of Schrödinger operators with $L^p$-potentials; in particular,  enclosures of discrete eigenvalues outside the essential spectrum, their absence, number, and accumulation as well as Lieb-Thirring inequalities are investigated therein.

It is well-known (see, e.g., \cite[Lem.~4.2]{Frank2018}) that $V \in L^p(\R^d)$ with
\begin{equation}\label{p.rel.com}
p \in  \begin{cases}
   [1, \infty) & \text{ if } d=1,
   \\
   (1, \infty) & \text{ if } d=2,
   \\
   [d/2, \infty) & \text{ if } d\geq 3,
\end{cases}
\end{equation}
is a relatively form compact perturbation of~$-\Delta$. Hence, for such $V$, the operator $H$ with the domain
$
\dom H= \lbrace f \in H^1(\R^d) : (-\Delta + V) f \in L^2(\R^d) \rbrace
$
is $m$-sectorial, the essential spectrum of $-\Delta$ remains stable under the perturbation by $V$, i.e.,
\begin{equation}
   \sigma_{\rm ess}(H) =     \sigma_{\rm ess}(-\Delta) = [0,\infty),
\end{equation}
and the spectrum of $H$ outside of $[0,\infty)$ comprises exclusively isolated eigenvalues of finite algebraic multiplicities.

More quantitatively, it is known that every
$\lambda \in \sigma_{\rm p}(H) \setminus [0,\infty)$ satisfies
\begin{align}
   |\lambda|^{\frac12} & \leq \frac{1}{2} \int_{\R} |V(x)| \dx & \text{ if } d &= 1, \label{ineq:Davies}
   \\
   |\lambda|^{\kappa} &\leq D_{\kappa,d} \int_{\R^d} |V(x)|^{\kappa+\frac{d}{2}} \dx & \text{ if } d &\geq 2, \quad 0 < \kappa \leq \frac{1}{2}, \label{ineq:Frank}
\end{align}
where $ D_{\kappa,d} > 0 $ are constants independent of $ V $, see \cite{AbramovAslanyianDavies2001, Frank2011}.
Moreover, if 
$ d \geq 3 $ there exist constants $D_{0,d}>0$ for which the condition
\begin{equation}\label{EV.abs.cond}
D_{0,d} \int_{\R^d} |V(x)|^{\frac{d}{2}} \dx < 1
\end{equation}
yields the absence of all eigenvalues, so $\sigma(H) = \sigma_{\rm c}(H)=[0,\infty)$, cf.~\cite[Thm.~2]{Frank2011}, \cite[Thm.~3.2]{Frank2017}, and $H$ is even similar to $-\Delta$, cf.~\cite[Thm.~12]{HansmannKrejcirik2022} as well as the pioneering works \cite[Thm.~XX.2.22]{DS3} and \cite[Thm.~6.1, 6.4]{Kato-1966-162}. For further results on the absence of eigenvalues in any dimension see \cite{Cossetti-2019-379,Fanelli-2018-275,Fanelli-2018-8,KochTataru2006, IonescuJerison2003}.

The main objective of this paper
is to investigate the existence, uniqueness, and multiplicity of eigenvalues of non-self-adjoint Schrödinger operators with
complex-valued potentials in the \emph{weak coupling} regime, i.e.,~the eigenvalues of
\begin{equation}\label{H.beta.def}
\begin{aligned}
H_\beta & := -\Delta - \beta V,
\\
\dom \Hbeta & := \lbrace f \in H^1(\R^d) : (-\Delta - \beta V)f \in L^2(\R^d) \rbrace,
\end{aligned}
\end{equation}
with a \emph{complex}-valued potential $V$
and a \emph{complex} coupling~$\beta \to 0$. This analysis complements and extends well-known classical results on
the weak coupling phenomenon
for self-adjoint Schrödinger operators with
real-valued potentials and a real coupling parameter going back to \cite{Simon1976}; 
we comment on this special case in more detail at the end of the Introduction.
It should be emphasized that weak coupling is a purely one and two-dimensional phenomenon, 
cf.~Remark~\ref{rem3}, and that weakly coupled eigenvalues emerge from the threshold of the essential spectrum.
Moreover, we stress that variational methods are not available for a non-real-valued potential $V$, and hence the existence of eigenvalues of
$H$ is a delicate task in general, see \cite{Pavlov-1962-146,Boegli-2016-352,Cuenin-2022-392,Wang-2011-96} (for the strong coupling regime, see, e.g.,~\cite{Almog-2016-48,Herau-2025-77,Semoradova-2022-54}).
Further results on non-self-adjoint weak coupling can be found in \cite{Borisov-2008-62,Novak-2016-96} in the case of waveguides, in \cite{CueninSiegl2018} for one-dimensional Dirac operators, the related one-dimensional damped wave operators,
and two-dimensional armchair graphene waveguides and in \cite{Fialova-2025-37} for Pauli operator with an Aharonov-Bohm potential. 

In the following we state our main result Theorem~\ref{theorem:s=2} and discuss some interesting special cases in
Corollary~\ref{cor:s=2.V.real} and Corollary~\ref{cor:s=2.V.complex.beta.real} below.
From now on we assume that the complex-valued potential $V$ satisfies (cf.~\eqref{p.rel.com})
\begin{equation} \label{assumption:family_v}
   \begin{aligned}
       V \in
       \begin{cases}
           L^1(\R) & \text{ if } d = 1, \\
           L^1(\R^2) \cap L^{1+\eta}(\R^2) \text{ for some } \eta > 0 & \text{ if } d = 2,
       \end{cases}
   \end{aligned}
\end{equation}
and, in addition, a Rollnik-type integrability condition 
\begin{equation} \label{def:Rollnik_space_mainjussi}
       \int_{\R^{2d}} \abs[]{V(x)} \big| \mu^{(d)}(x-y) \big|^2 \abs[]{V(y)} \dmath(x,y)<\infty,
\end{equation}
where
\begin{align} \label{def:mu_main}
   \mu^{(d)}(x)
   :=
   \begin{cases}
       \frac{1}{2}|x| & \text{ if } d = 1, \\
       \frac{1}{2\pi}\log|x| & \text{ if } d = 2.
   \end{cases}
\end{align}

We also make use of the complex numbers
\begin{align} \label{def:U_U_1_intro}
   U := \int_{\R^d} V(x) \dx\quad\text{and}
   \quad
   U_1 := \int_{\R^{2d}} V(x) \mu^{(d)}(x-y) V(y) \dmath(x,y),
\end{align}
which are well-defined for a potential $ V $ satisfying \eqref{assumption:family_v} and \eqref{def:Rollnik_space_mainjussi};
cf.~\eqref{U_1_exists}.

\begin{theorem} \label{theorem:s=2}
Let $H_\beta = -\Delta -\beta V $ be as in \eqref{H.beta.def} with $V$ satisfying \eqref{assumption:family_v} and
\eqref{def:Rollnik_space_mainjussi}.
For $ U $ and $ U_1 $ in \eqref{def:U_U_1_intro} assume $ U \neq 0 $ and define
\begin{equation} \label{def:funcU_intro}
   \funcU(\beta)
   :=
   \begin{cases}
       U \beta - U_1 \beta^2 & \text{ if } d = 1, \\
       U \beta - \left[ U_1 - (2\pi)^{-1}( \log(2) - \gamma) U^2 \right] \beta^2 & \text{ if } d = 2,
   \end{cases}
\end{equation}
where $ \gamma $ is the Euler-Masceroni constant.
Then there exist $ \epsilon > 0 $ and $ \const > 0 $ such that for every $ \beta\in \C $ with $ |\beta|<\epsilon $ the following assertions hold.
\begin{enumerate}
   \item If $ \beta $ is such that $ \funcU(\beta) $ satisfies
   \begin{equation}\label{cond:(i)}
   \begin{aligned}
   \Re\funcU(\beta) &> \const|{\Im\funcU(\beta)}|^{3}
       & \text{ if } d&=1,
       \\
   \Re\funcU(\beta) &> 2|{\Im\funcU(\beta)}|^{\frac12} \big( 1 + \const |{\Re\funcU(\beta)}|\big)
    & \text{ if } d&=2,
   \end{aligned}
   \end{equation}
   then $ H_\beta $ has exactly one discrete eigenvalue $ \lambda_\beta $ in $ \C \setminus [0, \infty) $. Moreover, this eigenvalue is
   simple and obeys as $\beta \to 0$
\begin{equation}\label{expansion:EV}
   \begin{aligned}
       \sqrt{-\lambda_\beta} & = \frac{U}{2} \beta
       - \frac{U_1}{2} \beta^2 + \calO( \beta^3 ) & \text{ if } d& =1,
       \\
       \log(-\lambda_\beta)
       & =
       -\frac{4\pi}{U\beta} - 4\pi \frac{U_1}{U^2} + \log(4) - 2 \gamma + \calO(\beta) & \text{ if } d & =2.
   \end{aligned}
\end{equation}
   \item If $ \beta $ is such that $ \funcU(\beta) $ satisfies
   \begin{equation}\label{cond:(ii)}
   \begin{aligned}
       \Re\funcU(\beta) &< -\const|{\Im\funcU(\beta)}|^{3}     & \text{ if } d&=1,
       \\
       \Re\funcU(\beta) &< 2|{\Im\funcU(\beta)}|^{\frac12} \big( 1 - \const |{\Re\funcU(\beta)}|\big)     & \text{ if } d&=2,
   \end{aligned}
   \end{equation}
   then $ H_\beta $ has no eigenvalue in $ \C \setminus [0, \infty) $.
\end{enumerate}
\end{theorem}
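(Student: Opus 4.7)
The plan is to convert the eigenvalue equation for $\Hbeta$ into a scalar implicit equation via the Birman--Schwinger principle combined with a Grushin (Schur-complement) reduction, and to then match this equation against the conditions on $\funcU(\beta)$.

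\textbf{Setup via Birman--Schwinger.} I would factor $V = u w$ with $u, w \in L^2(\R^d)$ (for instance $w := |V|^{1/2}$, $u := V/w$) and, for $\lambda \in \C \setminus [0, \infty)$, introduce the sandwiched resolvent $K(\lambda) := w\,(-\Delta - \lambda)^{-1}\,u$ on $L^2(\R^d)$. The hypotheses \eqref{assumption:family_v} and \eqref{def:Rollnik_space_mainjussi} make $K(\lambda)$ Hilbert--Schmidt and ensure a controlled expansion at $\lambda = 0$. The non-self-adjoint Birman--Schwinger principle then gives the equivalence
\[
    \lambda \in \sigma_{\rm p}(\Hbeta) \setminus [0,\infty)
    \;\Longleftrightarrow\;
    1 \in \sigma_{\rm p}(\beta K(\lambda)),
\]
with matching algebraic multiplicities; thus simplicity of the Schrödinger eigenvalue will reduce to simplicity of the corresponding zero of the scalar analytic function produced below.

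\textbf{Asymptotic expansion and scalar reduction.} From the explicit free resolvent kernels I would obtain a decomposition $K(\lambda) = s_d(\lambda)\,P + \calM + \calJ(\lambda)$, where $P$ is the rank-one operator $\phi \mapsto \langle \bar u, \phi\rangle w$ with $\mathrm{tr}\,P = U$, the compact operator $\calM$ has integral kernel $-w(x)\mu^{(d)}(x-y)u(y)$ (so that $\langle \bar u, \calM w\rangle = -U_1$), the singular prefactors are
\[
    s_1(\lambda) = \frac{1}{2\sqrt{-\lambda}}, \qquad s_2(\lambda) = -\frac{1}{2\pi}\bigl(\log\sqrt{-\lambda} + \gamma - \log 2\bigr),
\]
and $\calJ(\lambda)$ vanishes in Hilbert--Schmidt norm as $\lambda \to 0$. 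Since $P$ has rank one and $U \neq 0$, I would apply a Grushin/Feshbach reduction: writing $\phi = \alpha w + \psi$ with $\langle \bar u, \psi\rangle = 0$, the component of the equation on $\kernel\langle \bar u, \cdot\rangle$ can be solved for $\psi = \psi(\alpha, \beta, \lambda)$ by Neumann series (valid for $|\beta|$ small and $\lambda$ in a punctured neighbourhood of $0$), and substitution into the projected equation produces, after collecting terms, the scalar equation
\[
    2\sqrt{-\lambda} = \funcU(\beta) + \calO(\beta^3) \quad (d=1), \qquad \log(-\lambda) = -\frac{4\pi}{\funcU(\beta)} + \calO(\beta) \quad (d=2),
\]
in which the precise shape of $\funcU(\beta)$ in \eqref{def:funcU_intro} emerges from absorbing the $\calM$-contribution and, in the 2D case, the constant $(\gamma - \log 2)$ from $s_2$ into the singular unknown.

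\textbf{Dichotomy and uniqueness.} The admissibility constraint $\lambda \in \C \setminus [0,\infty)$ translates in $d=1$ to $\Re \sqrt{-\lambda} > 0$ and in $d=2$ to $\Im \log(-\lambda) \in (-\pi, \pi)$ with the principal branch. Inserting the leading behaviour of the scalar equation and balancing real and imaginary parts against the stated remainder, a direct computation shows that \eqref{cond:(i)} is precisely the sharp condition for the solution $\lambda_\beta$ to lie in the admissible region; in that case the implicit function theorem produces a unique simple zero, which admits the expansion \eqref{expansion:EV}, while the reversed inequality \eqref{cond:(ii)} forces the solution off the admissible set and hence yields no eigenvalue. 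Uniqueness over the whole of $\C \setminus [0,\infty)$, not merely near zero, follows by combining this local analysis with the eigenvalue enclosures \eqref{ineq:Davies}/\eqref{ineq:Frank}, which confine every eigenvalue of $\Hbeta$ to a disk around zero whose radius vanishes with $|\beta|$, hence inside the neighbourhood where the scalar reduction is valid.

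\textbf{Main obstacle.} The principal difficulty is the two-dimensional case. The logarithmic prefactor makes the scalar equation transcendental rather than polynomial, and extracting from it the sharp threshold \eqref{cond:(i)}--\eqref{cond:(ii)}---rather than a cruder sufficient condition---requires careful bookkeeping of the admissibility region of $\log(-\lambda)$ uniformly in the $\calO(\beta)$ remainder; in particular, the characteristic $|\Im \funcU(\beta)|^{1/2}$ scaling and the precise factor of $2$ in \eqref{cond:(i)}--\eqref{cond:(ii)} arise from inverting the principal-branch logarithm near the negative real axis and must be tracked with care. A secondary point is confirming algebraic simplicity, which I expect to follow from the rank-one nature of the dominant singular term of $K(\lambda)$ via a Fredholm-determinant argument.
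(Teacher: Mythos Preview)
Your proposal follows essentially the same route as the paper: Birman--Schwinger, rank-one (Grushin/Schur) reduction to a scalar equation with leading term $\funcU(\beta)$, and a dichotomy based on whether the resulting root lands in the admissible region for $\lambda$. The only differences are in packaging---the paper changes variable to $w = 1/\funcg{d}(z)$, uses Rouch\'e's theorem rather than the implicit function theorem to produce and count the simple zero, and isolates the admissibility computation (your ``main obstacle'') as a standalone geometric lemma---but the substance is the same.
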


\begin{remark}\label{rem3}
(i) Observe that the existence of weakly coupled eigenvalues is a purely one and two-dimensional effect.
Indeed, for $d \geq 3$ and $V\in L^{d/2}(\R^d)$ it follows from \eqref{EV.abs.cond} that
$\sigma_{\rm p}(H_\beta)= \emptyset$
for all $\beta \in \C$ such that
\begin{equation}\label{noEv}
D_{0,d} \int_{\R^d} |V(x)|^{\frac{d}{2}} \dx < \frac{1}{|\beta|^\frac d2}.
\end{equation}
Hence, if $ V \in L^1(\R^d) \cap L^p(\R^d) $ with $ p \geq d/2 $ (cf.~\eqref{p.rel.com}),
then $ V \in L^{d/2}(\R^d) $ and no
weak coupling phenomenon in the spirit of Theorem~\ref{theorem:s=2} occurs. 

(ii) In dimension $ d = 1,2 $ it follows from the spectral enclosures \eqref{ineq:Davies}
and \eqref{ineq:Frank} that weakly coupled eigenvalues of $ H_\beta $ (if they exist) obey
\begin{equation}\label{spectral_inclusion.new}
|\lambda| = \calO(|\beta|^2), \quad \beta \to 0,
\end{equation}
which is consistent with the asymptotic expansion \eqref{expansion:EV}. In fact, in Example~\ref{example:s=2.V.real.theta} below we consider potentials where \eqref{ineq:Davies} becomes sharp in the weak coupling regime.

(iii) We also note that in dimension $ d = 1,2 $ potentials $ V $ satisfying \eqref{assumption:family_v} do not induce 
positive eigenvalues, that is, $ \sigma_{\rm p}(H_\beta) \cap (0,\infty) = \emptyset $ for any $ \beta\in\C $;
cf.~\cite[Rem.~3.3]{Frank2017} for $ d = 1 $ and 
\cite{IonescuJerison2003, KochTataru2006} for $ d = 2 $.
\end{remark}

In the special case of a real-valued potential $V$ and complex coupling $\beta$, the conditions \eqref{cond:(i)} and \eqref{cond:(ii)} simplify.

\begin{corollary}  \label{cor:s=2.V.real}
Let the assumptions be as in Theorem~\ref{theorem:s=2} and, in addition, let
$ V : \R^d \to \R$ and $ U > 0 $.
Then there exist $ \epsilon' > 0 $ and $ \constcor > 0 $ such that for every $ \beta \in \C $ with $ |\beta| < \epsilon' $ the following assertions hold.
\begin{enumerate}
   \item If $ \beta $ satisfies
   \begin{equation} \label{cond:(i).cor.V.real}
   \begin{aligned}
       \Re\beta &> \left( -\frac{U_1}{U} + \constcor|{\Im\beta}| \right) (\Im\beta)^2 & \text{if } d=1,
       \\
       \Re\beta &> \frac{2}{U^\frac12} |{\Im\beta}|^{\frac12} \big( 1 + \constcor |{\Re\beta}| \big)  & \text{if } d=2,
   \end{aligned}
   \end{equation}
   then $ H_\beta $ has exactly one discrete eigenvalue $ \lambda_\beta $ in $ \C \setminus [0, \infty) $. Moreover, this eigenvalue is
   simple and obeys \eqref{expansion:EV}.
   \item If $ \beta $ satisfies
   \begin{equation} \label{cond:(ii).cor.V.real}
   \begin{aligned}
       \Re\beta
           &<
       \left( -\frac{U_1}{U} - \constcor|{\Im\beta}| \right) (\Im\beta)^2 & \text{if } d=1,
       \\
       \Re\beta
           &<
       \frac{2}{U^\frac12} |{\Im\beta}|^{\frac12} \big( 1 - \constcor |{\Re\beta}| \big) & \text{if } d=2,
   \end{aligned}
   \end{equation}
   then $ H_{\beta} $ has no eigenvalue in $ \C \setminus [0, \infty) $.
\end{enumerate}
\end{corollary}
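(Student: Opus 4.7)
The plan is to reduce Corollary~\ref{cor:s=2.V.real} to Theorem~\ref{theorem:s=2} by showing that, for suitable $\epsilon' \leq \epsilon$ and $\constcor = \constcor(\const, U, U_1) > 0$, the hypothesis \eqref{cond:(i).cor.V.real} on $\beta$ implies \eqref{cond:(i)} on $\calU(\beta)$, and similarly that \eqref{cond:(ii).cor.V.real} implies \eqref{cond:(ii)}. The conclusions of the corollary, including the asymptotic expansion \eqref{expansion:EV} and the absence of eigenvalues, then transfer verbatim from Theorem~\ref{theorem:s=2}.

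Since $V$ is real-valued, the numbers $U$ and $U_1$ from \eqref{def:U_U_1_intro} and, in dimension two, the coefficient $U_1 - (2\pi)^{-1}(\log(2)-\gamma) U^2$ appearing in \eqref{def:funcU_intro} are all real. Writing $a := \Re\beta$, $b := \Im\beta$ and expanding $\calU(\beta) = U\beta - U_1 \beta^2$ yields for $d = 1$
\begin{align}
   \Re\calU(\beta) = Ua + U_1(b^2 - a^2), \qquad \Im\calU(\beta) = b(U - 2 U_1 a),
\end{align}
together with the analogous formulas for $d = 2$, and the uniform bound $|\Im\calU(\beta)|^3 \leq (U + 2|U_1|\epsilon')^3 |b|^3$ on the disc $|\beta| < \epsilon'$.

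For part (i) in $d = 1$, multiplying \eqref{cond:(i).cor.V.real} by $U > 0$ gives $Ua + U_1 b^2 > \constcor U |b|^3$, which already controls the leading contribution to $\Re\calU(\beta)$. The residual $-U_1 a^2$ is then absorbed by a case split according to the sign of $U_1$ and the relative size of $a$ and $b$: if $U_1 \leq 0$ the correction has the favourable sign; if $U_1 > 0$ and $a$ sits close to the lower bound in \eqref{cond:(i).cor.V.real}, then $|a| \leq (U_1/U) b^2$ and hence $U_1 a^2 = \calO(\epsilon') |b|^3$ is dominated by $\constcor U |b|^3$; and if instead $|a|$ is of order $|b|$ or larger, the factorisation $Ua - U_1 a^2 = a(U - U_1 a)$ with $|U - U_1 a| \geq U/2$ for $\epsilon'$ small gives $\Re\calU(\beta) \geq Ua/4$, which dominates the cubic upper bound $\const |\Im\calU(\beta)|^3 \lesssim |b|^3 \leq a^2 |b|$. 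Choosing $\constcor \sim \const U^2$ and $\epsilon'$ sufficiently small in terms of $\const, U, U_1$ establishes \eqref{cond:(i)} in every subregime. For $d = 2$ the reduction is more direct, since to leading order $\Re\calU(\beta) = Ua(1 + \calO(\epsilon'))$ and $|\Im\calU(\beta)|^{1/2} = U^{1/2}|b|^{1/2}(1 + \calO(\epsilon'))$, so dividing \eqref{cond:(i).cor.V.real} by $U$ recovers \eqref{cond:(i)} after absorbing the $\calO(\epsilon')$ corrections into $\constcor$. Part (ii) is handled by the mirror argument with reversed inequalities.

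The main (though essentially technical) obstacle is the bookkeeping in the case analysis: the map $\beta \mapsto (\Re\calU(\beta), \Im\calU(\beta))$ is not monotone in the components of $\beta$, so the target inequality must be verified separately in each regime of the disc $|\beta| < \epsilon'$. This is resolved in the usual order, by first fixing $\constcor$ large enough in terms of $\const$, $U$, $U_1$ (respectively the $d = 2$ coefficient) and then shrinking $\epsilon'$ so that all subdominant contributions are uniformly dominated.
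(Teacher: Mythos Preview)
Your approach is exactly the paper's: reduce to Theorem~\ref{theorem:s=2} by expanding $\Re\calU(\beta)$ and $\Im\calU(\beta)$ in $a=\Re\beta$, $b=\Im\beta$ and verifying \eqref{cond:(i)}--\eqref{cond:(ii)} via case analysis. Your $d=1$ sketch is sound, though the paper organises the cases slightly differently (splitting on $\Im\beta=0$ first, then on ``$U_1\le 0$ or $\Re\beta\le 0$'' versus ``$U_1>0$ and $\Re\beta>0$'').

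One genuine gap: the ``mirror argument'' for part~(ii) in $d=2$ does not work as stated. Your leading-order claim $\Re\calU(\beta)=Ua(1+\calO(\epsilon'))$ relies implicitly on $|b|\lesssim a^2$, which is forced by \eqref{cond:(i).cor.V.real} but \emph{not} by \eqref{cond:(ii).cor.V.real}; under the latter, $a$ may be small or negative while $b$ stays of order $\epsilon'$, so the $Db^2$ term in $\Re\calU(\beta)=Ua-Da^2+Db^2$ can dominate. The paper handles this by an extra split on $|{\Im\beta}|\gtrless|{\Re\beta}|$: when $|{\Im\beta}|\ge|{\Re\beta}|$ one shows directly that $\Re\calU(\beta)/|{\Im\calU(\beta)}|^{1/2}=\calO(|{\Im\beta}|^{1/2})\to 0$, and only in the regime $|{\Im\beta}|<|{\Re\beta}|$ does the leading-order expansion and the mirror of your~(i) argument apply.
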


The conditions and claims of Corollary~\ref{cor:s=2.V.real} are illustrated in Figure \ref{figure:1}.
\begin{figure}[H]
      \centering
        \begin{subfigure}[b]{\linewidth}
        \centering
            \begin{subfigure}[b]{0.43\linewidth}
                \includegraphics[width=\linewidth]{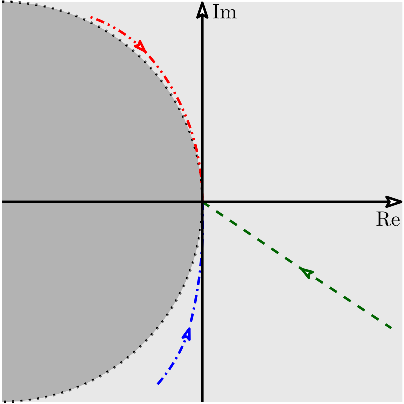}
               \caption{$\beta\to0$ in $d=1$}
               \label{W_r(d=1)}
           \end{subfigure}
           \hfill
      	   \centering
            \begin{subfigure}[b]{0.43\linewidth}
                \includegraphics[width=\linewidth]{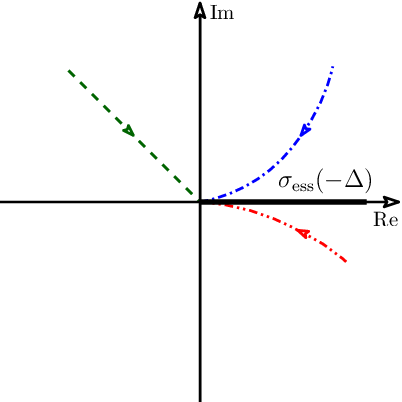}
                \caption{$\lambda_\beta\to0$ in $d=1$}
           \end{subfigure}
       \end{subfigure}

       \medskip

       \begin{subfigure}[b]{\linewidth}
        \centering
            \begin{subfigure}[b]{0.43\linewidth}
                \includegraphics[width=\linewidth]{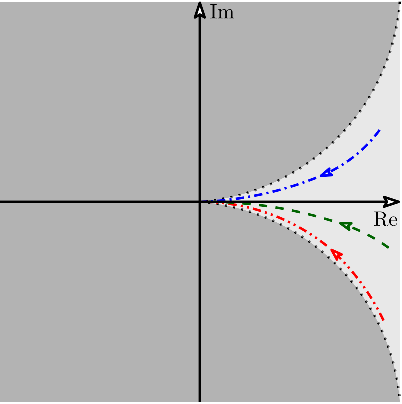}
               \caption{$\beta\to0$ in $d=2$}
               \label{W_r(d=2)}
           \end{subfigure}
            \hfill
            \centering
            \begin{subfigure}[b]{0.43\linewidth}
                \includegraphics[width=\linewidth]{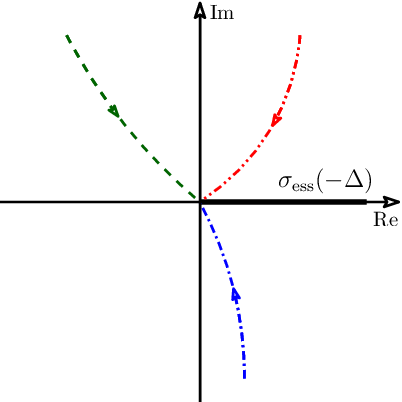}
               \caption{$\lambda_\beta\to0$ in $d=2$}
           \end{subfigure}
       \end{subfigure}
       \captionsetup{width=.82\linewidth}
		\caption{A schematic plot of Corollary \ref{cor:s=2.V.real} for $ U = U_1 = \constcor = 1 $.
In (a) and (c) the coupling parameter $\beta\rightarrow 0$ (along three different curves) leads to a simple discrete eigenvalue $\lambda_\beta$ of $H_\beta$ in (b) and (d).}
    \label{figure:1}
\end{figure}

\begin{example}  \label{example:s=2.V.real.theta}
Consider a non-negative potential $ V $, $ V \not\equiv 0 $, as in Corollary~\ref{cor:s=2.V.real} and assume that
the coupling is of the form
$ \beta(\epsilon) = e^{i\theta} \epsilon $ with $ \theta \in (-\pi, \pi] $ and $\epsilon>0$. Then one verifies (cf.~Figure \ref{figure:1}) that
there exists a discrete eigenvalue $ \lambda_{\beta(\epsilon)} $ as $ \epsilon\to 0+ $
of $ H_{\beta(\epsilon)} $ if and only if
\begin{align} \label{theta}
   \theta \in
   \begin{cases}
           \left[-\frac{\pi}{2}, \: \frac{\pi}{2} \right] & \text{ if } d = 1, \\
           \lbrace 0 \rbrace & \text{ if } d = 2.
   \end{cases}
\end{align}
If this eigenvalue exists it is simple, obeys \eqref{expansion:EV}, and is the only discrete eigenvalue 
of $ H_{\beta(\epsilon)} $ in the weak coupling limit. 
Note that in $ d = 1 $ the expansion \eqref{expansion:EV} yields
\begin{equation} \label{expansion:example}
   \sqrt{\smash{-\lambda_{\beta(\epsilon)}}\vphantom{\lambda_\beta}} = \frac{e^{i\theta}\epsilon}{2} \|V\|_{L^1} + \calO(\epsilon^2), \quad \epsilon \to 0+,
\end{equation}
which implies that the spectral enclosure \eqref{ineq:Davies} is sharp in the weak coupling regime; cf.~Section~\ref{subsec.example.proofs.1} for details.
\end{example}

In the next corollary we consider the special case where $ \beta $ is real-valued and $ V $
is a complex-valued potential.

\begin{corollary} \label{cor:s=2.V.complex.beta.real}
Let the assumptions be as in Theorem~\ref{theorem:s=2} and, in addition, let $\beta\in\R$.
Then for all sufficiently small $ \beta > 0 $ the following assertions hold.
\begin{enumerate}
   \item If
   \begin{equation} \label{cond:(i).cor.V.complex}
   \begin{aligned}
       &\Re U > 0 \text{ or alternatively } \Re U = 0 \text{ and } \Re U_1 < 0 &\text{if } d = 1, \\
       &\Im U = 0 \text{ and } \Re U > 2|{\Im U_1}|^\frac12 &\text{if } d = 2,
   \end{aligned}
   \end{equation}
   then $ H_\beta $ has exactly one discrete eigenvalue $ \lambda_\beta $ in $ \C \setminus [0, \infty) $. 
   Moreover, this eigenvalue is
   simple and obeys \eqref{expansion:EV}.
   \item If
   \begin{equation} \label{cond:(ii).cor.V.complex}
   \begin{aligned}
       &\Re U < 0 \text{ or alternatively } \Re U = 0 \text{ and } \Re U_1 > 0 &\text{if } d = 1, \\
       &\Im U \neq 0 \text{ or } \Re U < 2|{\Im U_1}|^\frac12 &\text{if } d = 2,
   \end{aligned}
   \end{equation}
   then $ H_{\beta} $ has has no eigenvalue in $ \C \setminus [0, \infty) $.
\end{enumerate}
\end{corollary}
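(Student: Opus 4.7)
The plan is to deduce the corollary as a specialization of Theorem~\ref{theorem:s=2} to $\beta \in \R$, $\beta > 0$ small, by verifying that the algebraic conditions \eqref{cond:(i).cor.V.complex} and \eqref{cond:(ii).cor.V.complex} on $U$ and $U_1$ imply the respective asymptotic conditions \eqref{cond:(i)} and \eqref{cond:(ii)} on $\funcU(\beta)$ from \eqref{def:funcU_intro}. To this end I would write $U = a + ib$ and $U_1 = c + id$ with real $a,b,c,d$ and expand
\begin{equation*}
\Re\funcU(\beta) = a\beta - \big(c - c_d(a^2-b^2)\big)\beta^2, \qquad \Im\funcU(\beta) = b\beta - (d - 2c_d ab)\beta^2,
\end{equation*}
where $c_d = 0$ for $d=1$ and $c_d = (2\pi)^{-1}(\log(2) - \gamma)$ for $d=2$. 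The argument then reduces to tracking which power of $\beta$ dominates in each expression under each of the stated alternatives.

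For $d=1$, condition \eqref{cond:(i)} reads $\Re\funcU(\beta) > \const|\Im\funcU(\beta)|^3$. In the first alternative of \eqref{cond:(i).cor.V.complex}, $a > 0$, one has $\Re\funcU(\beta) = a\beta + \calO(\beta^2)$ while $|\Im\funcU(\beta)|^3 = \calO(\beta^3)$, so the inequality holds for $\beta > 0$ small enough. In the second alternative, $a = 0$ and $c < 0$, one has $\Re\funcU(\beta) = -c\beta^2 + \calO(\beta^3)$ of order $\beta^2$ while still $|\Im\funcU(\beta)|^3 = \calO(\beta^3)$, and again the inequality holds. An analogous case distinction deals with \eqref{cond:(ii).cor.V.complex} and \eqref{cond:(ii)}, and the uniqueness, simplicity, and expansion \eqref{expansion:EV} then transfer directly from Theorem~\ref{theorem:s=2}.

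For $d=2$ the decisive observation is that the leading order of $|\Im\funcU(\beta)|^{1/2}$ changes from $|b|^{1/2}\beta^{1/2}$ when $b \neq 0$ to $|d|^{1/2}\beta$ when $b = 0$, since in the latter case $\Im\funcU(\beta) = -d\beta^2$. Under the assumption $\Im U = 0$ in \eqref{cond:(i).cor.V.complex}, dividing \eqref{cond:(i)} by $\beta$ yields $a + \calO(\beta) > 2|d|^{1/2}(1 + \calO(\beta))$, which holds for small $\beta$ because $a > 2|\Im U_1|^{1/2}$ is strict. For \eqref{cond:(ii).cor.V.complex}, if $b \neq 0$ then $|\Im\funcU(\beta)|^{1/2} \sim |b|^{1/2}\beta^{1/2}$ dominates $\Re\funcU(\beta) = \calO(\beta)$, making \eqref{cond:(ii)} automatic; if $b = 0$ and $a < 2|\Im U_1|^{1/2}$, the same division by $\beta$ yields \eqref{cond:(ii)}. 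The bookkeeping is routine and no genuine obstacle is expected; the one substantive point is the asymptotic phase transition of $|\Im\funcU(\beta)|^{1/2}$ at $\Im U = 0$, which is precisely what makes the $d=2$ conditions discriminate sharply between vanishing and non-vanishing $\Im U$.
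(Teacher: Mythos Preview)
Your proposal is correct and follows essentially the same approach as the paper: both reduce the corollary to Theorem~\ref{theorem:s=2} by expanding $\Re\funcU(\beta)$ and $\Im\funcU(\beta)$ for real $\beta>0$ and comparing leading powers of $\beta$ in the two sides of \eqref{cond:(i)} and \eqref{cond:(ii)}. The paper's proof is slightly more explicit about the sub-cases $\Im U_1=0$ versus $\Im U_1\neq 0$ in $d=2$ (where your division by $\beta$ must be interpreted as $0>0$ being replaced by $\Re\funcU(\beta)>0$ directly), but your outline covers these correctly and your identification of the ``phase transition'' in the order of $|\Im\funcU(\beta)|^{1/2}$ at $\Im U=0$ is precisely the crux.
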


\begin{example} \label{example_d=2}
For $ d = 1 $ the two conditions \eqref{cond:(i).cor.V.complex} and \eqref{cond:(ii).cor.V.complex} in
Corollary~\ref{cor:s=2.V.complex.beta.real} are easy to check and provide a convenient
characterization of the weak coupling phenomenon in one dimension.

Meanwhile, for $ d = 2 $ it is not immediately clear if there exist non-real potentials that satisfy \eqref{cond:(i).cor.V.complex}.
To address this issue, consider the family of potentials
\begin{align*}
   V(\alpha) = \alpha\Re V + i\Im V,\quad\alpha\in\R,
\end{align*}
where $ V $ is a fixed potential that satisfies \eqref{assumption:family_v} and \eqref{def:Rollnik_space_mainjussi}.
Assume, in addition, that
\begin{align*}
   \Im U = \int_{\R^2} \Im V \dx = 0
   \quad\text{ and }\quad
   \Re U = \int_{\R^2} \Re V \dx > 0.
\end{align*}
Then there exists $ \alpha^* \geq 0 $ (given by \eqref{alpha*}) such that
\begin{align*}
   V(\alpha) \text{ satisfies }
   \begin{cases}
       \eqref{cond:(i).cor.V.complex} & \text{ if } \alpha > \alpha^*, \\
       \eqref{cond:(ii).cor.V.complex} & \text{ if } \alpha < \alpha^*, \: \alpha \neq 0.
   \end{cases}
\end{align*}
Hence for $ \beta\to 0+ $ the Schrödinger operator $ H_\beta(\alpha)=-\Delta-\beta V(\alpha) $ in \eqref{H.beta.def}  has exactly one discrete eigenvalue (which is simple and obeys \eqref{expansion:EV}) if $ \alpha > \alpha^* $,
and has no discrete eigenvalues if $ \alpha < \alpha^* $, $ \alpha \neq 0 $; cf.~Section \ref{subsec.example.proofs.2} for 
details.
\end{example}

Let us now clarify the connection between our results 
and the classic weak coupling phenomenon for self-adjoint Schrödinger operators, going back to \cite{Simon1976}.
A potential $ V : \R^d \rightarrow \R $, $V\not\equiv 0$, satisfying \eqref{assumption:family_v} and the additional decay condition
\begin{equation*}
\begin{aligned}
\int_{\R^d} (1 + |x|^s)|V(x)| \dx & < \infty,
\end{aligned}
\end{equation*}
for $ s = 2 $ in $ d = 1 $ and for some $ s > 0 $ in $ d = 2 $,
leads to a negative eigenvalue of $ H_\beta $ for every $ \beta > 0 $ if and only if
$U\geq 0$. Moreover, this eigenvalue is simple, obeys \eqref{expansion:EV}, and is the only discrete eigenvalue of $ H_\beta $
as $ \beta\to 0+$. In turn, for $ V : \R^d \to \R $ the conditions of Corollary \ref{cor:s=2.V.complex.beta.real}
reduce to
\begin{align*}
	\eqref{cond:(i).cor.V.complex} \iff U > 0 \quad\text{and}\quad
	\eqref{cond:(ii).cor.V.complex} \iff U < 0.
\end{align*}
This means that for $ U \neq 0 $ the results of \cite{Simon1976} are contained as a special case
in our main theorem.
However, at the same time we emphasize that weak coupling
for complex-valued potentials is a much richer and diverse phenomenon.
Consider for simplicity
a non-negative potential $ V $, $ V \not\equiv 0 $,
with a coupling $ \beta\in \C $ as in Corollary~\ref{cor:s=2.V.real}.
Figure~\ref{figure:1} illustrates that, in dimension $d=1$, there exist arbitrarily small couplings
with $ \Re\beta < 0 $ that \emph{produce} a weakly coupled eigenvalue,
while in dimension $d=2$ there are couplings with $ \Re\beta \geq 0 $ for which
a similar effect \emph{cannot} be observed as $ |\beta|\to 0 $. 
Hence, a condition of the type $ \Re (U \beta) \geq 0 $, which may seem like the natural non-self-adjoint extension of 
Simon's conditions $ U \geq 0 $ and $ \beta > 0 $, is in general neither
necessary nor sufficient for the existence of weakly coupled eigenvalues in the non-self-adjoint
case. Finally, our results indicate that,
compared to the self-adjoint case, complex weak coupling is significantly less likely to occur in dimension $ d = 2 $ than in 
dimension $ d = 1 $; cf.~Figure~\ref{figure:1}.

For completeness, let us mention that the results in \cite{Simon1976} were further extended in
\cite{Klaus1977, KlausSimon1980, Rauch1980} for the self-adjoint case.
For a broader, yet non-exhaustive collection of related results we refer to 
\cite{Simon1977,Blankenbecler1977, Klaus1979, Lakaev1980, Patil1980, Klaus1982, Patil1982,
Holden1985, GesztesyHolden1987, BullaGesztesyRengerSimon1997, FassariKlaus1998, Melgaard2002}, \cite[Thm.~XIII.11, p.~336-338]{ReedSimonMethodsIV} and for more recent developments to 
\cite{FrankMorozovVugalter2011, KondejLotoreichik2014, ExnerKondejLotoreichik2018, CueninMerz2021, HoangHundertmarkRichterVugalter2023, MolchanovVainberg2023, ExnerKondejLotoreichik2024,Weidl-1999-24}.

\subsection{Strategy of the proof}

Our proof originates in the ideas in \cite{Simon1976}, which do not rely on variational arguments,
but instead exploit the criticality of $-\Delta$ in $d=1,2$ and the related singularity of the Green's function (see \eqref{def:resolvent}). The strategy of the proof is also inspired by \cite[Thm.~2.2]{CueninSiegl2018}, where a one-dimensional Dirac operator with non-self-adjoint potentials 
in a weak coupling setting is studied.
In particular, starting with the Birman-Schwinger principle (see Section~\ref{section:preliminaries}), the analysis of weakly coupled eigenvalues of $H_\beta$ in a certain subset $\calN_\beta$ of $\C$
is reduced to the study of zeros of an analytic function $\fbeta{d}$ (including the multiplicities), which can be asymptotically expanded as $\beta \to 0$, see  Proposition~\ref{prop:characterization_with_phi}. In this step, only the basic integrability conditions \eqref{assumption:family_v} are used. The additional condition \eqref{def:Rollnik_space_mainjussi} is employed in Section~\ref{section:improved} to characterize \emph{all} weakly coupled eigenvalues of $H_\beta$ as the zeros of $\fbeta{d}$ and also to expand $\fbeta{d}$ further, see Proposition~\ref{prop:equivalence}. In Section~\ref{section:main_results}, where the main results are proved, we use Lemma~\ref{lemma:rouche_eigenvalue_abstract}, which is
based on Rouch\'e's theorem, to show the existence of a simple zero and hence a simple eigenvalue of $H_\beta$. Moreover, the complete characterization of all weakly coupled eigenvalues of $H_\beta$ in Proposition~\ref{prop:equivalence} makes it possible 
to exclude further eigenvalues and to establish the non-existence of weakly coupled eigenvalues under a complementary condition \eqref{cond:(ii)}.

\subsection{Notations and conventions}
Throughout this paper, $ d \in \lbrace 1, 2 \rbrace $ denotes
the spatial
dimension. The scalar product (linear in the first entry) in $ L^2(\R^d) $ is denoted by $ \spf{\cdot}{\cdot} $; the standard norm on
$ L^p(\R^d) $ by $ \norm{\:\cdot\:}_{L^p} $. 
Furthermore, $ \C_+ = \lbrace z \in \C : \Re z > 0 \rbrace $ is the open right half-plane, and
$ D(z_0;r) = \lbrace z \in \C : |z-z_0| < r \rbrace $ denotes the open disc of radius
$ r > 0 $ centered at $ z_0 \in \C $.
We choose $ \sqrt{\cdot} $ as the branch of the square root with positive real part, and
$ \log : \C \setminus (-\infty, 0] \rightarrow \C $ is the principal
branch of the complex logarithm, that is, for $z=\vert z\vert e^{i\varphi}$, $\varphi\in (-\pi,\pi)$, we set
$\log(z)=\log\vert z\vert + i\varphi$; some useful properties of the logarithm can be found in the beginning of
Appendix~\ref{section:appenidx:inequalities}. Finally, we agree to use the letter $C$ for a positive constant in our estimates, which may change in
between the lines.

\subsection*{Acknowledgements}
This research was funded in part by the Austrian
Science Fund (FWF) 10.55776/P 33568-N. For the purpose of open access, the authors have applied a CC
BY public copyright licence to any Author Accepted Manuscript version arising from this submission.

\clearpage

\section{The Birman-Schwinger principle and eigenvalues in the weak coupling regime} \label{section:preliminaries}

In this section we consider the operators $ H_\beta $, $ \beta \in \C $, given by
\begin{equation}\label{H.beta.def2}
\begin{aligned}
H_\beta & = -\Delta - \beta V,
\\
\dom \Hbeta & = \lbrace f \in H^1(\R^d) : (-\Delta - \beta V)f \in L^2(\R^d) \rbrace,
\end{aligned}
\end{equation}
from \eqref{H.beta.def} in $L^2(\R^d)$ under the assumption that the (complex-valued) potential $V$ satisfies
\begin{equation} \label{assumption:family_v2}
	\begin{aligned}
		V \in
		\begin{cases}
			L^1(\R) & \text{ if } d = 1, \\
			L^1(\R^2) \cap L^{1+\eta}(\R^2) \text{ for some } \eta > 0 & \text{ if } d = 2;
		\end{cases}
	\end{aligned}
\end{equation}
cf.~\eqref{assumption:family_v}.
We recall that $H_\beta $ is $m$-sectorial and $\sigma_{\rm ess}(H_\beta) = [0,\infty)$, $\beta\in\C$ 
(for the various definitions of the essential spectrum we refer to \cite[Chap.~I.4]{EdmundsEvans1987} 
and for further details
to \cite[App.~B]{Frank2018}). In the following
we are interested in the discrete eigenvalues of $H_\beta$, typically in the situation that $\beta\in\C$ is sufficiently small.

\subsection{Birman-Schwinger principle} \label{subsec:general_BS}

Recall first that for $ z \in \C_+ $ the resolvent $ {(-\Delta + z^2)^{-1}} $ of the free Laplacian in $ L^2(\R^d) $
is an integral operator with the kernel
\begin{equation} \label{def:resolvent}
\begin{aligned}
	\ResolventKernel{d}(x,y;z)
		=
	\begin{cases}
		(2z)^{-1} e^{-z\abs[]{x-y}} & \text{ if } d = 1, \\
		(2\pi)^{-1} K_0(z\abs[]{x-y}) & \text{ if } d = 2,
	\end{cases}
	\qquad x,y \in \R^d,
\end{aligned}
\end{equation}
where $ K_0 : \C \setminus (-\infty,0] \rightarrow \C $ denotes the modified Bessel function of
second kind of order zero (for details
see Section~\ref{section:appenidx:inequalities}).
The eigenvalues $-z^2$ of $ \Hbeta $ outside $ [0, \infty) $ can be characterized via the Birman-Schwinger operator
$ Q(z) $, $z\in\C_+$, which is a compact integral operator in $ L^2(\R^d) $ with kernel
\begin{align} \label{def:birman_schwinger_operator_kernel}
	\KernelQ{d}(x,y;z)
		=
	|V(x)|^{\frac{1}{2}} \ResolventKernel{d}(x,y;z) V(y)^{\frac{1}{2}}, \quad
	x, y \in \R^d,
\end{align}
where we use the convention
\begin{align*}
	V(x)^{\frac{1}{2}}
		:=
	\begin{cases}
		|V(x)|^{-\frac{1}{2}}V(x) & \text{ if } V(x) \neq 0, \\
		0 & \text{ if } V(x) = 0.
	\end{cases}
\end{align*}
The Birman-Schwinger principle now reads as follows (for the definition of eigenvalues of operator families and eigenvalues of finite type, see Section~\ref{section:eigenvalues_operator_functions}). For a proof we
refer to \cite[Prop.~4.1]{Frank2018}.

\begin{lemma} \label{lemma:Birman_Schwinger}
Let $ \Hbeta = -\Delta - \beta V $ be as in \eqref{H.beta.def2} for $ V $ satisfying
\eqref{assumption:family_v2}. Let $ Q(z) $, $ z \in \C_+ $, be the compact integral operator
in $ L^2(\R^d) $ with kernel \eqref{def:birman_schwinger_operator_kernel}.
Then, for any $ z \in \C_+ $, there holds
\begin{align}
	\dim\kernel (\Hbeta + z^2) = \dim\kernel ( 1 - \beta Q(z) ).
\end{align}
Moreover, any eigenvalue of the family $ {1 - \beta Q} $ is an eigenvalue of finite type, and
the algebraic multiplicity of $ {1 - \beta Q} $ at $ z $ coincides with the algebraic multiplicity of $ -z^2 $ as an eigenvalue
of $ \Hbeta $.
\end{lemma}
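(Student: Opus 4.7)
\emph{Plan.} My approach is the classical Birman--Schwinger argument combined with analytic Fredholm theory, organized around the factorization $V = V^{1/2}\, U$ with $U(x):=|V(x)|^{1/2}$ and $V^{1/2}$ as defined just above the statement. The first task is the kernel dimension identity, for which I would produce mutually inverse maps between $\kernel(\Hbeta + z^2)$ and $\kernel(1-\beta Q(z))$. For $f\in\kernel(\Hbeta+z^2)$, i.e.\ $(-\Delta+z^2)f = \beta V f$, applying $(-\Delta+z^2)^{-1}$ and then multiplying by $U$ yields $\phi := Uf$ with $\phi = \beta Q(z)\phi$; the map $f\mapsto Uf$ is injective on this kernel since $Uf = 0$ forces $Vf = 0$ and would then make $-z^2$ an eigenvalue of $-\Delta$, impossible for $z\in\C_+$. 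Conversely, for $\phi \in \kernel(1-\beta Q(z))$ I set $f := \beta(-\Delta+z^2)^{-1} V^{1/2}\phi$ and check, using $V = V^{1/2} U$, that $Uf = \phi$ and $(\Hbeta + z^2)f = 0$.

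For the finite-type and multiplicity assertions, the operator-valued function $z\mapsto 1-\beta Q(z)$ is analytic on $\C_+$ (from the explicit dependence of $\ResolventKernel{d}(x,y;z)$ on $z$), and by dominated convergence on the kernel \eqref{def:birman_schwinger_operator_kernel} one has $\|Q(z)\|\to 0$ as $\Re z\to\infty$. The analytic Fredholm theorem then yields that $(1-\beta Q(z))^{-1}$ is meromorphic on $\C_+$ and that every eigenvalue of the family is of finite type. To match algebraic multiplicities I would invoke the second resolvent identity
\begin{equation*}
   (\Hbeta + z^2)^{-1} - (-\Delta + z^2)^{-1} = \beta(-\Delta + z^2)^{-1} V^{1/2} (1-\beta Q(z))^{-1} U (-\Delta + z^2)^{-1},
\end{equation*}
valid on the joint resolvent set. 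At a pole $z_0 \in \C_+$ the factor $(-\Delta + z^2)^{-1}$ is holomorphic, so the principal parts of both sides in the variable $z$ have the same rank; the local biholomorphism $\lambda = -z^2$ on $\C_+$ then transfers this into equality of the algebraic multiplicity of $1-\beta Q$ at $z_0$ and of $\Hbeta$ at $-z_0^2$.

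The delicate step is precisely this comparison of principal parts: one must rule out any rank drop when sandwiching $(1-\beta Q(z))^{-1}$ by $V^{1/2}$ and $U$, i.e.\ verify that the Jordan chains of $1-\beta Q$ at $z_0$ are transported faithfully to those of $\Hbeta + z^2$ at $-z_0^2$. The cleanest route is to invoke the Gohberg--Sigal/Keldysh equivalence theory for analytic operator families, which reduces this bookkeeping to the already-established kernel identity. A secondary technical point, although stated as part of the hypothesis, is compactness of $Q(z)$ itself: in $d=1$ the bounded kernel and $|V|\in L^1$ immediately give the Hilbert--Schmidt property, whereas in $d=2$ the logarithmic singularity of $K_0$ requires splitting $Q(z)$ into a Hilbert--Schmidt part near the diagonal (handled by $V \in L^{1+\eta}$) and a bounded piece away from it (handled by the exponential decay of $K_0$ and $V\in L^1$).
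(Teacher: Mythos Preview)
The paper does not supply its own proof of this lemma; it simply refers to \cite[Prop.~4.1]{Frank2018}. Your proposal is the standard Birman--Schwinger argument and is essentially what one finds in that reference and its predecessors: the bijection between kernels via $f\mapsto |V|^{1/2}f$ and its inverse $\phi\mapsto\beta(-\Delta+z^2)^{-1}V^{1/2}\phi$, analytic Fredholm theory for the finite-type assertion, and the factored second resolvent identity for matching algebraic multiplicities. The outline is correct.

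Two minor remarks on the sketch. First, your acknowledgment that the ``no rank drop'' step is delicate is warranted; in the literature this is typically handled either via the Gohberg--Sigal equivalence theory you mention, or by identifying both multiplicities with the order of vanishing of a (regularized) Fredholm determinant, which the resolvent identity transports directly. Second, in $d=2$ the function $V^{1/2}\phi$ need not lie in $L^2$, so the inverse map $\phi\mapsto f$ and the verification that $f\in\dom H_\beta$ must be read in the form sense (using that $|V|^{1/2}(-\Delta+z^2)^{-1/2}$ and its adjoint are bounded, which is exactly the relative form-compactness hypothesis); this is routine but worth flagging since you wrote the argument as if all factors were bounded $L^2\to L^2$.
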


\subsection{Reduction to zeros of a holomorphic function} \label{subsec:reduc}
The main idea to characterize the eigenvalues of $ \Hbeta $ in the weak coupling limit goes back \cite{Simon1976}, where the Birman-Schwinger operator $ Q(z) $ is decomposed in the form
\begin{align} \label{decomp:Qeps}
	Q(z) = L(z) + M(z),\quad z\in \C_+;
\end{align}
with integral operators $ L(z) $ and $ M(z) $ in $ L^2(\R^d) $ with respective kernels
\begin{align}
	\KernelL{d}(x,y; z) &:= |V(x)|^{\frac{1}{2}} \funcg{d}(z) V(y)^{\frac{1}{2}}, \nonumber \\
	\KernelM{d}(x,y; z) &:= |V(x)|^{\frac{1}{2}} \big( \ResolventKernel{d}(x,y;z) - \funcg{d}(z) \big) V(y)^{\frac{1}{2}}, \label{def:kernel:M}
\end{align}
and the function $ \funcg{d} : \C_+ \rightarrow \C $ is given by
\begin{equation} \label{def:g^d}
\begin{aligned}
	\funcg{d}(z)
		=
	\begin{cases}
		(2z)^{-1} & \text{if } d = 1, \\
		-(2\pi)^{-1} \log(z) & \text{if } d = 2.
	\end{cases}
\end{aligned}
\end{equation}
Note that $ \funcg{d} $ is injective and holomorphic on $ \C_+ $. Observe that $ L(z) $, $z\in \C_+$, is
a rank one operator and that $ \| L(\cdot) \| $ has a singularity of the same order as $ \funcg{d} $ at $ z = 0 $.
The usefulness of the decomposition \eqref{decomp:Qeps} is demonstrated in the following lemma.

\begin{lemma}	\label{lemma:characterization_eigenvalues_groundstate}
Let $ \Hbeta = -\Delta - \beta V $ be as in \eqref{H.beta.def2} for $ V $ satisfying
\eqref{assumption:family_v2}, let
$$
U = \int_{\R} V(x) \dx
$$
be as in \eqref{def:U_U_1_intro},
and let $ \funcg{d} : \C_+ \rightarrow \C $ be as in \eqref{def:g^d}.
Then for all sufficiently small $ \beta \in \C $ the set
\begin{align} \label{def:N_eps}
	\calN_\beta := \left\{ z \in \C_+ :  \| \beta M(z) \| < \frac{1}{2} \right\}
\end{align}
is non-empty and open, the function $\fbeta{d} : \calN_\beta \rightarrow \C $ given by
\begin{equation} \label{def:feps_lemma}
\begin{aligned}
	 \fbeta{d}(z) =  1 - \funcg{d}(z) \big[ U\beta + \beta
	 \spf[big]{[1 - \beta M(z)]^{-1}  \beta M(z) |V|^\frac{1}{2}}{\smash{\overline{V}^{\frac{1}{2}}}} \big],
	 \quad z \in \calN_\beta,
\end{aligned}
\end{equation}
is holomorphic on $ \calN_\beta $, and for all  $z \in \calN_\beta$ one has
\begin{align} \label{eq:eigenvalue_equation_lemma}
	-z^2 \in \sigma_{\rm{p}}(\Hbeta) \iff \fbeta{d}(z) = 0.
\end{align}
Moreover, if $ \fbeta{d}(z) = 0 $, then the algebraic multiplicity of $ -z^2 $ as an eigenvalue of $ \Hbeta $ coincides with the multiplicity of $ z $ as
a root of $ \fbeta{d} $.
\end{lemma}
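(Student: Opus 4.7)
The plan is to combine the Birman--Schwinger principle of Lemma~\ref{lemma:Birman_Schwinger} with the rank-one structure of $L(z)$ in order to reduce the eigenvalue equation to a single scalar identity. Writing out the kernel of $L(z)$ one sees that
\begin{equation*}
   L(z)f = g_1(z)\,\spf{f}{\overline{V}^{1/2}}\,|V|^{1/2},\qquad f\in L^2(\R^d),
\end{equation*}
so $L(z)$ is rank one. Whenever $1-\beta M(z)$ is invertible, one may factor
\begin{equation*}
   1-\beta Q(z) = (1-\beta M(z))\,(1-A(z)),\qquad A(z):=(1-\beta M(z))^{-1}\beta L(z),
\end{equation*}
and $A(z)$ is still rank one. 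Consequently $\kernel(1-\beta Q(z)) = \kernel(1-A(z))$, and the latter is non-trivial iff the unique possibly non-zero eigenvalue of $A(z)$, namely $\operatorname{tr}A(z)$, equals $1$.

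To put this into a rigorous analytic framework I would first verify that $z\mapsto M(z)$ is a holomorphic family of bounded operators on $\C_+$. Under \eqref{assumption:family_v2} the Birman--Schwinger operator $Q(z)$ is compact and depends holomorphically on $z\in\C_+$ (this is built into the derivation of Lemma~\ref{lemma:Birman_Schwinger}); since $L(z)$ is rank one and obviously holomorphic, so is $M(z)=Q(z)-L(z)$, and in particular $\|M(z)\|$ is locally bounded on $\C_+$. Fixing any $z_0\in\C_+$, the inequality $\|\beta M(z_0)\|<\tfrac12$ therefore holds for all $|\beta|$ small enough, so $\calN_\beta\neq\emptyset$; openness follows from continuity of $z\mapsto\|\beta M(z)\|$. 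On $\calN_\beta$ a Neumann series yields a holomorphic inverse $(1-\beta M(z))^{-1}$, which, together with the holomorphy of $g_1$, gives the holomorphy of $\fbeta{d}$.

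For the equivalence \eqref{eq:eigenvalue_equation_lemma}, introduce $\xi(z):=(1-\beta M(z))^{-1}|V|^{1/2}$, so that $A(z)f = \beta g_1(z)\,\spf{f}{\overline{V}^{1/2}}\,\xi(z)$ and hence
\begin{equation*}
   \operatorname{tr}A(z) = \beta g_1(z)\,\spf{\xi(z)}{\overline{V}^{1/2}}.
\end{equation*}
Substituting $(1-\beta M(z))^{-1}=1+(1-\beta M(z))^{-1}\beta M(z)$ into $\xi(z)$ and using the identity $\spf{|V|^{1/2}}{\overline{V}^{1/2}} = \int_{\R^d}V(x)\dx = U$, one recovers precisely $1-\operatorname{tr}A(z) = \fbeta{d}(z)$. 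Combined with Lemma~\ref{lemma:Birman_Schwinger}, this gives $-z^2\in\sigma_{\rm p}(H_\beta) \iff \fbeta{d}(z)=0$.

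Finally, for the multiplicity claim I would invoke the standard property that the algebraic multiplicity of a Fredholm-analytic operator family is invariant under left (or right) multiplication by a holomorphic \emph{invertible} family. Applied to the factorization $1-\beta Q(z)=(1-\beta M(z))(1-A(z))$, this transfers the algebraic multiplicity of $1-\beta Q(\cdot)$ at $z$ to that of the rank-one family $1-A(\cdot)$, which in turn coincides with the order of vanishing of the Sylvester-type determinant $\det(1-A(z))=1-\operatorname{tr}A(z)=\fbeta{d}(z)$; combining this with the multiplicity part of Lemma~\ref{lemma:Birman_Schwinger} then yields the result. The only point I expect to be mildly delicate is the invariance-under-invertible-multiplication property for the algebraic multiplicity of Fredholm-analytic families, which I would simply quote from the preliminary section on eigenvalues of operator functions.
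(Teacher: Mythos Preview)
Your proof is correct and follows essentially the same route as the paper: both use the factorization $1-\beta Q(z)=(1-\beta M(z))(1-A(z))$ on $\calN_\beta$ and then reduce the rank-one piece to a scalar equation. The paper phrases the rank-one reduction via the $AB\leftrightarrow BA$ trick (with $B:L^2\to\C$, $A_\beta(z):\C\to L^2$) and Lemma~\ref{lemma:algebraic_multiplicity_A_B}, whereas you use the equivalent trace/determinant formulation $\det(1-A(z))=1-\operatorname{tr}A(z)=\fbeta{d}(z)$; both yield the same scalar function and the same multiplicity identification, and the invariance-under-invertible-multiplication step you flag is exactly Lemma~\ref{lemma:invertible_operator_eigenvalues}.
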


\begin{proof}
Since $ M $ is an analytic family of bounded operators on $ \C_+ $, the set $ \calN_\beta $ is open and it is also non-empty for all
sufficiently small $ \beta \in \C $. Moreover, $ \fbeta{d} $ is well-defined for all $z \in \calN_\beta$ as $1-\beta M(z)$ is boundedly invertible;
the holomorphicity of $ \fbeta{d} $ follows from the analyticity of $ M $ and the holomorphicity of $ \funcg{d} $ on $ \C_+ $.

Let us continue by proving the equivalence \eqref{eq:eigenvalue_equation_lemma}.
By the Birman-Schwinger principle from Lemma~\ref{lemma:Birman_Schwinger},
it suffices to show that for all $z \in \calN_\beta$ the equivalence
\begin{align} \label{proof:ground_state_equiv_1}
	0 \in \sigma_{\rm{p}}(1 - \beta Q(z)) \iff  \fbeta{d}(z) = 0
\end{align}
is valid.
Since $1-\beta M(z)$ is boundedly invertible for $ z \in \calN_\beta $, the decomposition \eqref{decomp:Qeps} can be rewritten as
\begin{equation} \label{proof:ground_state_factorization_1}
\begin{aligned}
	1 - \beta Q(z)
		&=
	1 - \beta (L(z) + M(z)) \\
		&=
	(1 - \beta M(z))(1 - [1 - \beta M(z)]^{-1} \beta L(z))
\end{aligned}
\end{equation}
and this yields the equivalence
\begin{align} \label{proof:ground_state_equiv_2}
	0 \in \sigma_{\rm{p}}( 1 - \beta Q(z) )
		\iff
	0 \in \sigma_{\rm{p}}( 1 - [1 - \beta M(z)]^{-1} \beta L(z) ).
\end{align}
Next, let us set
\begin{align*}
	B :
	\begin{cases} L^2(\R^d) \rightarrow \C, \\ f \mapsto \int_{\R^d} V(y)^{\frac12} f(y) \dy, \end{cases}
	A_\beta(z) :
	\begin{cases} \C \rightarrow L^2(\R^d), \\ \varphi \mapsto [1 - \beta M(z)]^{-1} |V|^{\frac{1}{2}} \varphi.
	\end{cases}
\end{align*}
A short computation shows the operator identity
\begin{align} \label{proof:ground_state_identity_1}
	[1 - \beta M(z)]^{-1} \beta L(z) = \funcg{d}(z) \beta A_\beta(z) B;
\end{align}
indeed, for any $ f \in L^2(\R^d) $ one clearly has $ \funcg{d}(z) |V|^{\frac12} B f = L(z)f $ and hence
\begin{align*}
	\funcg{d}(z) \beta A_\beta(z) B f
		=
	\funcg{d}(z) \beta [ 1 - \beta M(z)]^{-1} |V|^{\frac{1}{2}} B f
		=
	[ 1 - \beta M(z)]^{-1} \beta L(z) f. 
\end{align*}
By combining \eqref{proof:ground_state_identity_1} with \eqref{proof:ground_state_equiv_2},
it is clear that
\begin{equation}
\begin{aligned} \label{proof:ground_state_equiv_3}
		0 \in \sigma_{\rm{p}}(1 - \beta Q(z))
		&\iff 0 \in \sigma_{\rm{p}}( 1 - [1 - \beta M(z)]^{-1} \beta L(z) ) \\
		&\iff 0 \in \sigma_{\rm{p}}( 1 - \funcg{d}(z) \beta A_\beta(z) B ) \\
		&\iff 0 \in \sigma_{\rm{p}}( 1 - \funcg{d}(z) \beta B A_\beta(z) ),
\end{aligned}
\end{equation}
where we used for the last equivalence  that
$0 \in \sigma_{\rm{p}}(1 - CD) \iff 0 \in \sigma_{\rm{p}}(1 - DC)$
for everywhere defined and bounded operators $ C $ and $ D $ between Hilbert spaces.
Finally, using the definition of $ A_\beta(z) $, $ B $, and $ U $ we obtain
\begin{align*}
	\beta B A_\beta(z)
		&=
	\beta \spf[big]{[1 - \beta M(z)]^{-1} |V|^\frac{1}{2}}{\smash{\overline{V}^{\frac{1}{2}}}} \\
		&=
	\beta \spf[big]{[1 - \beta M(z)]^{-1}
		(1 - \beta M(z) + \beta M(z) )|V|^\frac{1}{2}}{\smash{\overline{V}^{\frac{1}{2}}}} \\
		&=
	U \beta + \beta \spf[big]{[1 - \beta M(z)]^{-1}  \beta M(z) |V|^\frac{1}{2}}{\smash{\overline{V}^{\frac{1}{2}}}}.
\end{align*}
Hence
\begin{equation*}
		1 - \funcg{d}(z) \beta B A_\beta(z)
			=
		1 - \funcg{d}(z) \big[ U \beta + \beta \spf[big]{[1 - \beta M(z)]^{-1}  \beta M(z) |V|^\frac{1}{2}}{\smash{\overline{V}^{\frac{1}{2}}}} \big]	
			=
		\fbeta{d}(z),
\end{equation*}
which together with \eqref{proof:ground_state_equiv_3} shows
\eqref{proof:ground_state_equiv_1} and thus \eqref{eq:eigenvalue_equation_lemma}.

It remains to prove the claim about the algebraic multiplicities.
By the Birman-Schwinger principle from Lemma~\ref{lemma:Birman_Schwinger}, the algebraic multiplicity of
$ -z^2 \in \sigma_{\rm{p}}(\Hbeta)$ with $z \in \calN_\beta$ coincides with the algebraic multiplicity of the family $ {1 - \beta Q} $
at $z$, so the claim follows if we verify
\begin{align} \label{proof:ground_state_identity_2}
		m_a(z; 1 - \beta Q) = m_a(z; \fbeta{d}).
\end{align}
To justify \eqref{proof:ground_state_identity_2} recall that
$ 1 - \beta M(z) $ is boundedly invertible for all $ z \in \calN_\beta $.
Hence, we use the factorization 
\eqref{proof:ground_state_factorization_1}
and the fact that eigenvalues of operator families are invariant under multiplications by boundedly invertible
operators (see Lemma~\ref{lemma:invertible_operator_eigenvalues}) and conclude
\begin{align*}
	m_a(z; 1 - \beta Q)
		&=
	m_a(z; 1 - [1 - \beta M]^{-1} \beta L) .
\end{align*}
Finally, \eqref{proof:ground_state_identity_1} implies
\begin{align*}
	m_a(z; 1 - [1 - \beta M]^{-1} \beta L)
		&=
	m_a(z; 1 - \funcg{d} \beta A_\beta B) \\
		&=
	m_a(z; 1 - \funcg{d} \beta B A_\beta) \\
		&=
	m_a(z; \fbeta{d})
\end{align*}
where we used  Lemma~\ref{lemma:algebraic_multiplicity_A_B} to obtain the second equality.
The last quantity is the multiplicity of $ z $ as a root of $ \fbeta{d} $,  see Remark~\ref{remark:algebraic_multiplicity_root}.
\end{proof}

\subsection{Transformation of \texorpdfstring{$\fbeta{d}$}{Lambda}} \label{subsec:transform}
We will now transform the argument of the function $ \fbeta{d} $ in \eqref{def:feps_lemma}, which is useful
in our further analysis.
For this consider a restriction of the function $\funcg{d}$ in \eqref{def:g^d} given by
\begin{equation*}
\funcg{d}:\discBig{0}{\frac12} \cap \C_+\rightarrow \funcg{d}\Big( \discBig{0}{\frac12} \cap \C_+ \Big),
\quad z\mapsto
	\begin{cases}
		(2z)^{-1} & \text{if } d = 1, \\
		-(2\pi)^{-1} \log(z) & \text{if } d = 2,
	\end{cases}
\end{equation*}
and observe that $\funcg{d}$ is holomorphic on $ \disc{0}{\frac12} \cap \C_+ $ and bijective in the present context. 
The corresponding inverse has the form
\begin{equation*}
\funcg{d}^{-1}:\funcg{d}\Big( \discBig{0}{\frac12} \cap \C_+\Big)\rightarrow \discBig{0}{\frac12} \cap \C_+,
\quad w\mapsto
\begin{cases}
		\frac{1}{2w} & \text{ if } d = 1, \\
		\exp\left(- 2\pi w \right) & \text{ if } d = 2.
	\end{cases}
\end{equation*}
Next, we define the set
\begin{align} \label{def:Omega_Phi}
	\Omega :=
	\left\lbrace w \in \C_+ : \frac{1}{w} \in \funcg{d}\Big( \discBig{0}{\frac12} \cap \C_+ \Big) \right\rbrace,
\end{align}
and we note that $ \Omega$ is non-empty since $ \funcg{d}(\disc{0}{\frac12} \cap \C_+) \subset \C_+$.
Furthermore, we make use of the mapping
\begin{equation*}
\varphi:\Omega\rightarrow  \funcg{d}\Big( \discBig{0}{\frac12} \cap \C_+ \Big) ,\quad w\mapsto\frac{1}{w},
\end{equation*}
and we define
\begin{equation}\label{def:Phi}
\Phi:=\funcg{d}^{-1}\circ\varphi:\Omega\rightarrow \discBig{0}{\frac12} \cap \C_+,\quad
w\mapsto
\begin{cases}
		\frac{w}{2} & \text{ if } d = 1, \\
		\exp\left(-\frac{2\pi}{w}\right) & \text{ if } d = 2.
	\end{cases}
\end{equation}
From the construction it is clear that the function $\Phi:\Omega\rightarrow \disc{0}{\frac12} \cap \C_+$ is
well-defined, holomorphic, bijective, and one has $\funcg{d}(\Phi(w))=\frac{1}{w}$ for $w\in\Omega$.
In the next lemma we determine the set $\Omega$ explicitly.

\begin{lemma} \label{lemma:Phi}
The set $\Omega$ in \eqref{def:Omega_Phi} is given by
\begin{equation}\label{Omegajussi1}
\Omega=
       \disc{0}{1} \cap \C_+ \quad\text{if}\,\, d=1
\end{equation}
and
\begin{equation}\label{Omegajussi2}
\Omega=
       \left\lbrace w \in \C_+: \: \Re\left( \frac{1}{w} \right) > \frac{\log(2)}{2\pi}, \: \abs{\Im\left( \frac{1}{w} \right)}<\frac{1}{4} \right\rbrace \quad\text{if}\,\, d=2,
\end{equation}
and, in particular, $\Omega$ is bounded.
\end{lemma}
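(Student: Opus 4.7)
The plan is to compute the image $g^{(d)}(D(0;\tfrac12)\cap\C_+)$ explicitly in each dimension and then read off $\Omega$ from its definition by inverting $w\mapsto 1/w$. Both the forward image and the inversion are elementary conformal maps, so I expect no serious obstacles — only careful case work in $d=2$ because of the logarithm.

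For $d=1$, parametrize $z=re^{i\theta}$ with $r\in(0,\tfrac12)$ and $\theta\in(-\tfrac\pi2,\tfrac\pi2)$. Then
\begin{equation*}
g^{(1)}(z)=\frac{1}{2z}=\frac{1}{2r}e^{-i\theta},
\end{equation*}
and since $1/(2r)$ ranges over $(1,\infty)$ while $-\theta$ ranges over $(-\tfrac\pi2,\tfrac\pi2)$, the image is exactly $\{\zeta\in\C_+: |\zeta|>1\}$. Consequently
\begin{equation*}
\Omega=\bigl\{w\in\C_+:\tfrac1w\in\C_+,\ |\tfrac1w|>1\bigr\}=\{w\in\C_+:|w|<1\}=D(0;1)\cap\C_+,
\end{equation*}
which is bounded and proves \eqref{Omegajussi1}.

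For $d=2$, use the same parametrization of $z$. The principal branch gives $\log(z)=\log r+i\theta$, hence
\begin{equation*}
g^{(2)}(z)=-\frac{1}{2\pi}\log z=-\frac{\log r}{2\pi}-\frac{i\theta}{2\pi}.
\end{equation*}
As $r$ ranges over $(0,\tfrac12)$, $-\log(r)/(2\pi)$ covers $(\log(2)/(2\pi),\infty)$; as $\theta$ ranges over $(-\tfrac\pi2,\tfrac\pi2)$, $-\theta/(2\pi)$ covers $(-\tfrac14,\tfrac14)$. Since $\log r$ and $\theta$ are independent, the image is the open half-strip
\begin{equation*}
g^{(2)}\bigl(D(0;\tfrac12)\cap\C_+\bigr)=\bigl\{\zeta\in\C:\Re\zeta>\tfrac{\log 2}{2\pi},\ |\Im\zeta|<\tfrac14\bigr\},
\end{equation*}
which is automatically contained in $\C_+$. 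Substituting $\zeta=1/w$ in the defining condition of $\Omega$ yields \eqref{Omegajussi2} at once.

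Finally, for boundedness of $\Omega$ in $d=2$, note that the constraint $\Re(1/w)>\log(2)/(2\pi)$ alone suffices: writing $w=|w|e^{i\alpha}$ with $\alpha\in(-\tfrac\pi2,\tfrac\pi2)$ gives $\Re(1/w)=\cos(\alpha)/|w|\le 1/|w|$, so any $w\in\Omega$ satisfies $|w|<2\pi/\log(2)$. Combined with the $d=1$ case, this completes the proof. The only step that requires any care is verifying that the logarithm is indeed the principal branch on the relevant half-disc so that the real and imaginary parts decouple as above; this is immediate since $D(0;\tfrac12)\cap\C_+\subset\C\setminus(-\infty,0]$.
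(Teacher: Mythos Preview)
Your proof is correct and follows essentially the same approach as the paper: both compute the image $g_1(D(0;\tfrac12)\cap\C_+)$ explicitly and then invert the definition of $\Omega$, and both derive boundedness in $d=2$ from the single constraint $\Re(1/w)>\log(2)/(2\pi)$. Your version is slightly more explicit, spelling out the polar parametrization and the independence of real and imaginary parts under the logarithm, whereas the paper simply states the images directly.
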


\begin{figure}[H]
      \centering
        \begin{subfigure}[b]{\linewidth}
        \centering
            \begin{subfigure}[b]{0.45\linewidth}
                \includegraphics[width=\linewidth]{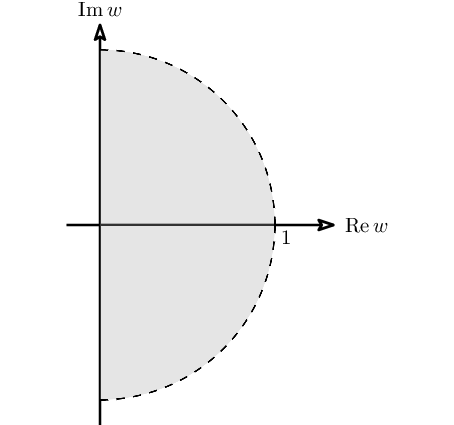}
               \caption{$\Omega$ in $ d = 1 $; cf.~\eqref{Omegajussi1}.}
           \end{subfigure}
           \hfill
      	   \centering
            \begin{subfigure}[b]{0.45\linewidth}
                \includegraphics[width=\linewidth]{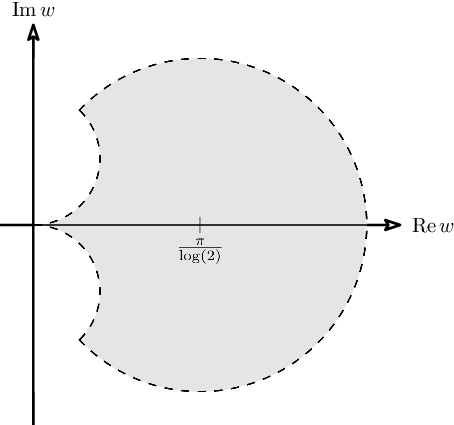}
                \caption{$\Omega$ in $ d = 2 $; cf.~\eqref{Omegajussi2} and \eqref{omega_alt_d=2}.}
           \end{subfigure}
       \end{subfigure}
	\caption{An illustration of the set $ \Omega $ defined in \eqref{def:Omega_Phi}.}
	\label{figure:2}
\end{figure}

\begin{proof}
For $d=1$, a direct computation shows
	\begin{align} \label{proof:Phi_funcg1}
		\funcg{1}\Big( \discBig{0}{\frac12} \cap \C_+ \Big)
		=
		\left\lbrace z \in \C_+ : |z| > 1 \right\rbrace
	\end{align}
	and hence we conclude \eqref{Omegajussi1}. For $d=2$, we obtain
	\begin{align} \label{proof:Phi_funcg2}
		\funcg{2}\Big( \discBig{0}{\frac12} \cap \C_+ \Big)
		=
		\left\lbrace z \in \C_+ : \Re z > \frac{\log(2)}{2\pi}, \: |{\Im z}|<\frac{1}{4} \right\rbrace,
	\end{align}
	which leads to \eqref{Omegajussi2}. To see that $\Omega$ is bounded in the case $d=2$ note that the
	condition 
	$$
	\frac{\Re w}{\vert w\vert^2}=\Re\left( \frac{1}{w} \right) > \frac{\log(2)}{2\pi},\quad w\in\Omega,
	$$
	implies $\vert w\vert^2 < \frac{2\pi}{\log(2)}\Re w\leq \frac{2\pi}{\log(2)}\vert w\vert$ and hence
	$\vert w\vert< \frac{2\pi}{\log(2)}$ for $w\in\Omega$.
	\end{proof}
	
	We also remark that for $ w \in \Omega $ in the case $d=2$ one has
	\begin{align*}
		\Re\left( \frac{1}{w} \right) > \frac{\log(2)}{2\pi}
		\iff
		\left( \Re w - \frac{\pi}{\log(2)} \right)^2 + (\Im w)^2 < \left( \frac{\pi}{\log(2)} \right)^2,
	\end{align*}
	thus leading to the alternative form (see Figure \ref{figure:2})
	\begin{equation} \label{omega_alt_d=2}
\Omega=D\left(\frac{\pi}{\log(2)}; \frac{\pi}{\log(2)}\right) \cap
       \left\lbrace w \in \C_+: \: \abs{\Im\left( \frac{1}{w} \right)}<\frac{1}{4} \right\rbrace.
\end{equation}

Now the map $ \Phi $ will be used to transform the function $ \fbeta{d}$ in \eqref{def:feps_lemma}
to the form \eqref{Lam.transf} below, where the last summand can be viewed as a perturbation term
that will be further expanded later.

\begin{proposition} \label{prop:characterization_with_phi}
Let $ \Hbeta = -\Delta - \beta V $ be as in \eqref{H.beta.def2} for $ V $ satisfying
\eqref{assumption:family_v2}. Let $ \fbeta{d} : \calN_\beta \rightarrow \C $ and $ \Phi : \Omega \rightarrow \disc{0}{\frac12} \cap \C_+ $
be given by \eqref{def:feps_lemma} and \eqref{def:Omega_Phi}, respectively.
Then for all sufficiently small $ \beta \in \C $, the following items hold.
\begin{enumerate}
	\item For all $w \in \Omega$ such that $\Phi(w) \in \calN_\beta$ we have
	\begin{equation}\label{Lam.Phi.equiv}
		\hspace{1cm}-\Phi(w)^2 \in \sigma_{\rm{p}}(H_\beta) \iff \fbeta{d}(\Phi(w)) = 0.
	\end{equation}
	If $w$ is a zero of $ \fbeta{d} \circ \Phi $, then its multiplicity as a root of $ \fbeta{d} \circ \Phi $ coincides with
	the algebraic multiplicity of the eigenvalue $ -\Phi(w)^2 \in \sigma_{\rm{p}}(\Hbeta) $.
	\item For all $w \in \Omega$ such that $\Phi(w) \in \calN_\beta$ we have
	\begin{equation}\label{Lam.transf}
		\hspace{1cm}\fbeta{d}(\Phi(w))
			=
		1 - \frac{U \beta}{w} - \frac{\beta }{w}
		\spf[big]{[1 - \beta M(\Phi(w))]^{-1}  \beta M(\Phi(w)) |V|^\frac{1}{2}}
		{\smash{\overline{V}^\frac12}}.
	\end{equation}
\end{enumerate}
\end{proposition}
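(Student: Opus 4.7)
The plan is to obtain both items as direct consequences of Lemma~\ref{lemma:characterization_eigenvalues_groundstate} combined with the defining property $\funcg{d}(\Phi(w)) = 1/w$ of the map $\Phi$ and the fact that $\Phi$ is biholomorphic between $\Omega$ and $\disc{0}{\frac{1}{2}} \cap \C_+$, as established in the construction preceding Lemma~\ref{lemma:Phi}. In essence, the proposition is a bookkeeping reformulation of the previous lemma under the change of variable $z = \Phi(w)$, and no new analytic input is required.

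I would dispatch item (ii) first, since it is a direct substitution. For $w \in \Omega$ with $\Phi(w) \in \calN_\beta$, inserting $z = \Phi(w)$ into the explicit expression \eqref{def:feps_lemma} for $\fbeta{d}(z)$ and using $\funcg{d}(\Phi(w)) = \varphi(w) = 1/w$ (which holds by the very construction $\Phi = \funcg{d}^{-1} \circ \varphi$) yields \eqref{Lam.transf} verbatim. For the equivalence \eqref{Lam.Phi.equiv} in item (i), I would simply restrict \eqref{eq:eigenvalue_equation_lemma} from Lemma~\ref{lemma:characterization_eigenvalues_groundstate} to the point $z = \Phi(w) \in \calN_\beta$, which is admissible precisely because of the assumption $\Phi(w) \in \calN_\beta$.

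For the multiplicity claim in item (i), which I expect to be the only mildly delicate step, I would use that $\Phi$ is a holomorphic bijection with holomorphic inverse $\funcg{d}^{-1} \circ \varphi$, so that $\Phi'(w_0) \neq 0$ for every $w_0 \in \Omega$. A short Taylor-expansion argument then shows that if $\fbeta{d}(z) = c(z - \Phi(w_0))^m + \calO\bigl((z-\Phi(w_0))^{m+1}\bigr)$ with $c \neq 0$, then $\fbeta{d}(\Phi(w)) = c\,\Phi'(w_0)^m (w - w_0)^m + \calO\bigl((w-w_0)^{m+1}\bigr)$, so that the multiplicity of $w_0$ as a zero of $\fbeta{d} \circ \Phi$ coincides with the multiplicity of $\Phi(w_0)$ as a zero of $\fbeta{d}$. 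By Lemma~\ref{lemma:characterization_eigenvalues_groundstate} this latter number equals the algebraic multiplicity of $-\Phi(w_0)^2 \in \sigma_{\rm p}(H_\beta)$, which finishes the argument. The main (and essentially only) obstacle is this invariance of zero multiplicity under a biholomorphic change of variable, which I would either cite as a standard fact about holomorphic functions or justify by the one-line Taylor expansion just sketched; everything else reduces to a clean substitution.
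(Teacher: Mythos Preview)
Your proposal is correct and follows essentially the same route as the paper: both items are obtained by substituting $z=\Phi(w)$ into Lemma~\ref{lemma:characterization_eigenvalues_groundstate} and using $\funcg{d}(\Phi(w))=1/w$. The only cosmetic difference is that the paper packages the multiplicity-preservation step into a separate lemma (Lemma~\ref{lemma:multiplicity}, proved via the argument principle), whereas you justify it by a direct Taylor-expansion argument using $\Phi'(w_0)\neq 0$; both are standard and equivalent.
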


\begin{proof} (i)
Lemma~\ref{lemma:characterization_eigenvalues_groundstate} implies for $ z \in \calN_\beta$
\begin{align*}
	-z^2 \in \sigma_{\rm{p}}(\Hbeta) \iff \fbeta{d}(z) = 0
\end{align*}
and the algebraic multiplicity of the eigenvalue $-z^2$ of $\Hbeta$ coincides with the multiplicity of $z$ as a root of
$\fbeta{d}$. Thus, if $z = \Phi(w) $ for some $ w \in \Omega $ such that $ \Phi(w) \in \calN_\beta $, then we obtain
\begin{align*}
	-\Phi(w)^2 \in \sigma_{\rm{p}}(H_\beta) \iff \fbeta{d}(\Phi(w)) = 0,
\end{align*}
in which case the algebraic multiplicity of $ -\Phi(w)^2 \in \sigma_{\rm{p}}(\Hbeta) $ coincides with the multiplicity of $ \Phi(w) $ as a root of $ \fbeta{d} $. Finally, since $ \Phi $ is bijective and holomorphic
it follows from Lemma~\ref{lemma:multiplicity} that the latter quantity is equal to the multiplicity of $ w $ as a
root of $ \fbeta{d} \circ \Phi $.

(ii) The form of $ \fbeta{d} \circ \Phi $ in \eqref{Lam.transf} follows directly from \eqref{def:feps_lemma} and the fact that, by construction,
$ \funcg{d}(\Phi(w)) = 1/w$; cf.~\eqref{def:Phi}. \qedhere
\end{proof}

\section{Characterization of all eigenvalues for Rollnik-type potentials} \label{section:improved}

In this section we continue investigating the discrete eigenvalues of the operators
$ H_\beta $, $ \beta \in \C $, in \eqref{H.beta.def2}. From
now on, for the rest of this paper, we assume, in addition to the condition \eqref{assumption:family_v2}, that the potential $ V : \R^d \rightarrow \C $ satisfies the Rollnik-type integrability condition
\begin{equation} \label{def:Rollnik_space_main2}
		\int_{\R^{2d}} \abs[]{V(x)} \big| \mu^{(d)}(x-y) \big|^2 \abs[]{V(y)} \dmath(x,y)<\infty;
\end{equation}
cf.~\eqref{def:Rollnik_space_mainjussi}.
For such potentials, it turns out in the next lemma that the operator family $M$ (which
are integral operators with kernels given by \eqref{def:kernel:M}) is uniformly bounded,
which then leads to a characterization of
\emph{all} discrete eigenvalues of $ \Hbeta $ as zeros of the function $ \fbeta{d} \circ \Phi $ (for all sufficiently small $\beta$), see Proposition~\ref{prop:equivalence}.

\begin{lemma} \label{lemma:M_eps_error_general}
	Let $ M(z) $, $ z \in \C_+ $, be the integral operator with kernel \eqref{def:kernel:M}.
	Suppose that $ V $ satisfies \eqref{assumption:family_v2} and \eqref{def:Rollnik_space_main2}. Then there exists a constant $ C_M > 0 $ such that
	\begin{equation}\label{M.bdd}
		\| M(z) \| \leq C_M
	\end{equation}
	holds for all $ z \in \disc{0}{\frac{1}{2}} \cap \C_+ $. In particular,
	\begin{equation}\label{D.Nb}
		\discBig{0}{\frac12} \cap \C_+ \subset \calN_\beta	
	\end{equation}
	for all sufficiently small $ \beta \in \C $.
\end{lemma}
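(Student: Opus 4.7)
The plan is to estimate the operator norm $\|M(z)\|$ by the Hilbert-Schmidt norm, which here reads
$$
\|M(z)\|_{\mathrm{HS}}^2 = \int_{\R^{2d}} |V(x)|\,\big|\ResolventKernel{d}(x,y;z) - \funcg{d}(z)\big|^2\, |V(y)|\,\dxy,
$$
and to reduce matters to a pointwise bound on the kernel difference that is uniform in $z \in \disc{0}{\frac12} \cap \C_+$ and which integrates against $|V(x)|\,|V(y)|$ thanks to \eqref{assumption:family_v2} and the Rollnik-type condition \eqref{def:Rollnik_space_main2}. Once \eqref{M.bdd} is established, \eqref{D.Nb} is immediate: for $|\beta| < 1/(2C_M)$ one has $\|\beta M(z)\| < \tfrac12$ on $\disc{0}{\frac12} \cap \C_+$, so this disc lies in $\calN_\beta$ by \eqref{def:N_eps}.

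For $d=1$ I would use the explicit representation
$$
\ResolventKernel{1}(x,y;z) - \funcg{1}(z) = \frac{e^{-z|x-y|}-1}{2z} = -\frac{|x-y|}{2}\int_0^1 e^{-t z|x-y|}\,\dt,
$$
together with $|e^{-t z|x-y|}| \le 1$ for $\Re z > 0$ and $t \ge 0$, to obtain the clean pointwise bound $\big|\ResolventKernel{1}(x,y;z) - \funcg{1}(z)\big| \le \mu^{(1)}(x-y)$ valid on all of $\C_+$. Substituting this into the Hilbert-Schmidt integral yields $\|M(z)\|_{\mathrm{HS}}^2 \le \int_{\R^{2}} |V(x)|\,|\mu^{(1)}(x-y)|^2\,|V(y)|\,\dxy$, which is finite by \eqref{def:Rollnik_space_main2}; note that in $d=1$ this actually gives a uniform bound on the whole right half-plane $\C_+$.

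The $d=2$ case is more delicate and is where I expect the main technical work. Setting $w := z|x-y|$ and using $\log z = \log w - \log|x-y|$ for $x\ne y$, one has
$$
2\pi\,\big(\ResolventKernel{2}(x,y;z) - \funcg{2}(z)\big) = \big(K_0(w) + \log w\big) - \log|x-y|,
$$
and I would split according to whether $|w| \le 1$ or $|w| > 1$. In the first regime, the small-argument expansion $K_0(w) = -\log(w/2) - \gamma + O(|w|^2\log|w|)$ recorded in Appendix~\ref{section:appenidx:inequalities} shows that $K_0(w) + \log w$ is uniformly bounded for $|w|\le 1$, $\Re w > 0$. In the second regime, the exponential decay of $K_0$ in the right half-plane yields $|K_0(w)| \le C$, while $|z| < \tfrac12$ together with $|w| > 1$ force $|x-y| > 1/|z| > 2$; combined with $|\arg z| < \pi/2$ this gives $|\log z| \le -\log|z| + \pi/2 \le \log|x-y| + \pi/2$. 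In either regime one arrives at
$$
\big|\ResolventKernel{2}(x,y;z) - \funcg{2}(z)\big| \le C\big(1 + |\log|x-y||\big),
$$
and then $(a+b)^2 \le 2(a^2+b^2)$ followed by integration yields
$$
\|M(z)\|_{\mathrm{HS}}^2 \le C\big(\|V\|_{L^1}^2 + I\big), \qquad I := \int_{\R^{4}} |V(x)|\,|\log|x-y||^2\,|V(y)|\,\dxy,
$$
which is finite by \eqref{assumption:family_v2} and the identity $|\log|x|| = 2\pi\,|\mu^{(2)}(x)|$ combined with \eqref{def:Rollnik_space_main2}.

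The key technical obstacle is precisely the two-dimensional case, where the logarithmic singularity of $\funcg{2}$ at $z=0$ and the logarithmic singularity of $\ResolventKernel{2}$ on the diagonal must be controlled \emph{simultaneously and uniformly} in $(z,x,y)$; the pivot that makes the cancellation visible is the identity $\log z = \log(z|x-y|) - \log|x-y|$ together with the two-regime treatment of $K_0$ via its small- and large-argument asymptotics.
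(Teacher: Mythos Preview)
Your proposal is correct and follows essentially the same route as the paper: bound $\|M(z)\|$ by the Hilbert--Schmidt norm, then establish the pointwise kernel estimates $|\ResolventKernel{1}-\funcg{1}|\le \tfrac12|x-y|$ and $|\ResolventKernel{2}-\funcg{2}|\le C(1+|\log|x-y||)$, integrating against $|V(x)||V(y)|$ via \eqref{assumption:family_v2} and \eqref{def:Rollnik_space_main2}. In fact your $d=2$ argument---the substitution $w=z|x-y|$, the identity $\log z=\log w-\log|x-y|$, and the split $|w|\le 1$ versus $|w|>1$ handled by the small- and large-argument asymptotics of $K_0$---is precisely how the paper proves the relevant inequality (Lemma~\ref{lemma:ineq:weber_kernels}(ii)(a)) in the appendix.
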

	\begin{proof}
		By the definition of $ M(z) $ we have
		\begin{align*}
			\| M(z) \|^2
			\leq
			\int_{\R^{2d}} |V(x)| |\ResolventKernel{d}(x,y;z) - \funcg{d}(z)|^2 |V(y)| \dmath(x,y).
		\end{align*}
		Using Lemma~\ref{lemma:ineq:weber_kernels}(i)(a) and Lemma~\ref{lemma:ineq:weber_kernels}(ii)(a)
		we obtain (recall the we use the convention that $ C $ is a positive constant that may
		change in between estimates)
		\begin{align*}
			&|\ResolventKernel{1}(x,y;z) - \funcg{1}(z)|^2
			\leq
			\frac{1}{4}|x-y|^{2}
			=
			|\mu^{(1)}(x-y)|^2, \\
			&|\ResolventKernel{2}(x,y;z) - \funcg{2}(z))|^2
			\leq
			C(1 + \logabs{x-y}^2)
			\leq
			C(1 + |\mu^{(2)}(x-y)|^2),
		\end{align*}
		for $ d=1 $ and $ d=2 $, respectively. Hence, in both cases
		\begin{align*}
			\| M(z) \|^2
			&\leq
			C \| V \|_{L^1}^2 + C \int_{\R^{2d}} |V(x)| |\mu^{(d)}(x-y)|^2 |V(y)| \dmath(x,y)
			<
			\infty;
		\end{align*}
		cf.~\eqref{def:Rollnik_space_main2}.	
		Finally, \eqref{D.Nb} is an immediate consequence of \eqref{M.bdd}, see \eqref{def:N_eps}.
	\end{proof}

Recall the definitions of the complex numbers
\begin{equation}\label{ujussi}
	U := \int_{\R^d} V(x) \dx\quad\text{and}
   \quad
   U_1 := \int_{\R^{2d}} V(x) \mu^{(d)}(x-y) V(y) \dmath(x,y)
\end{equation}
from \eqref{def:U_U_1_intro}. Note that \eqref{def:Rollnik_space_main2} ensures that $U_1$ is well-defined since
the  inequality $ |{\mu^{(d)}(\cdot)}| \leq 1 + |{\mu^{(d)}(\cdot)}|^2 $ implies
\begin{equation} \label{U_1_exists}
\begin{aligned}
	\int_{\R^{2d}} &|V(x)| |\mu^{(d)}(x-y)| |V(y)| \dmath(x,y)  
		\\
	&\leq \| V \|_{L^1}^2
		+
	\int_{\R^{2d}} |V(x)| |\mu^{(d)}(x-y)|^2 |V(y)| \dmath(x,y) < \infty.
\end{aligned}
\end{equation}

Next, for $ U $ and $ U_1 $ as above let 
\begin{align} \label{def:a_eps}
	\funcU(\beta)
		=
	\begin{cases}
		U \beta - U_1 \beta^2 & \text{ if } d = 1, \\
		U \beta - \left[U_1 - (2\pi)^{-1}( \log(2) - \gamma) U^2 \right] \beta^2 & \text{ if } d = 2,
	\end{cases}
\end{align}
where $ \gamma $ denotes the Euler-Masceroni constant; cf.~\eqref{def:funcU_intro}.
It is helpful to notice that in the case $U \neq 0$ there exists a constant $m_\mathcal U \geq 1$ such that
\begin{equation}\label{Ucal.beta.equiv}
	\frac{1}{m_\mathcal U} |\funcU(\beta)| \leq |\beta| \leq m_\mathcal U |\funcU(\beta)|
\end{equation}
for all sufficiently small $\beta \in \C$.

The main result of this section is the following proposition.

\begin{proposition} \label{prop:equivalence}
Let $ \Hbeta = -\Delta - \beta V $ be as in \eqref{H.beta.def2} for $ V $ satisfying
\eqref{assumption:family_v2} and \eqref{def:Rollnik_space_main2}.
Let $ \fbeta{d} : \calN_\beta \rightarrow \C $ and $ {\Phi : \Omega \rightarrow \disc{0}{\frac12} \cap \C_+} $
be given by \eqref{def:feps_lemma} and \eqref{def:Phi}, respectively, and let $\funcU(\beta)$ be as in \eqref{def:a_eps}.
Then for all sufficiently small $ \beta \in \C $, the following items are true.
\begin{enumerate}
	\item There holds
	\begin{equation}\label{EV.equiv.main}
		\sigma_{\rm{p}}(\Hbeta) \setminus [0, \infty)
		=
		\left\lbrace -\Phi(w)^2 : w \in \Omega \text{ and } \fbeta{d}(\Phi(w)) = 0 \right\rbrace.
	\end{equation}
	If $ w $ is a root of $ \fbeta{d} \circ \Phi $, then its multiplicity as a root of $ \fbeta{d} \circ \Phi $ coincides with
	the algebraic multiplicity of the eigenvalue $ -\Phi(w)^2 \in \sigma_{\rm{p}}(\Hbeta) $.
	\item The function $ \fbeta{d} \circ \Phi $ admits the expansion
	\begin{align} \label{expansion:fbeta:Phi}
		\fbeta{d}(\Phi(w))
			=
		1 - \frac{1}{w} \left( \funcU(\beta) + r_\beta(w) \right),
		\quad w \in \Omega,
	\end{align}
	where $ r_\beta : \Omega \rightarrow \C $ is a holomorphic
	function that satisfies
	\begin{align} \label{error:rbeta:Phi}
		|r_\beta(w)| \leq C_r |\beta|^2 \max\left\lbrace |\beta|, |w| \right\rbrace,\quad w \in \Omega,
	\end{align}
	for some constant $ C_r > 0 $.
	\item Suppose that $ U \neq 0 $.
	Then there exists a constant $ C_\mathcal U > 0 $
	such that any root $ w \in \Omega $ of $ \fbeta{d} \circ \Phi $ satisfies
	\begin{align*}
		w \in \disc{\funcU(\beta)}{C_\mathcal U|\funcU(\beta)|^3}.
	\end{align*}
\end{enumerate}
\end{proposition}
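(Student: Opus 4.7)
The plan is to prove (i)--(iii) in order, with part (ii) providing the expansion that part (iii) inverts. For (i), the key observation is that the spectral enclosures \eqref{ineq:Davies} and \eqref{ineq:Frank} applied to $\beta V$ force any $-z^2 \in \sigma_{\rm p}(H_\beta) \setminus [0,\infty)$ to satisfy $|z| = \calO(|\beta|^\alpha)$ for some $\alpha > 0$ (e.g., $\alpha = 1$ in $d=1$, and $\alpha = (1+1/\kappa)/2$ for $\kappa = \min\{\tfrac12,\eta\}$ in $d=2$). For $|\beta|$ sufficiently small every such $z$ therefore lies in $\disc{0}{\frac12}\cap\C_+$, which is contained in $\calN_\beta$ by Lemma~\ref{lemma:M_eps_error_general}. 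Combining this inclusion with Lemma~\ref{lemma:characterization_eigenvalues_groundstate}, Proposition~\ref{prop:characterization_with_phi}(i), and the biholomorphic bijectivity of $\Phi:\Omega \to \disc{0}{\frac12}\cap\C_+$ then yields \eqref{EV.equiv.main} together with the matching-multiplicity statement.

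For (ii), I would start from \eqref{Lam.transf} and apply the Neumann-type identity
\[
[1-\beta M(z)]^{-1}\beta M(z) = \beta M(z) + \beta^2 M(z)[1-\beta M(z)]^{-1}M(z),
\]
which is valid on $\disc{0}{\frac12}\cap\C_+$ by Lemma~\ref{lemma:M_eps_error_general}. This yields
\[
\fbeta{d}(\Phi(w)) = 1 - \frac{1}{w}\Big( U\beta + \beta^2 \langle M(\Phi(w))|V|^{\frac12},\smash{\overline{V}}^{\frac12}\rangle + \tau_\beta(w)\Big),
\]
with $\tau_\beta(w):=\beta^3 \langle M(\Phi(w))[1-\beta M(\Phi(w))]^{-1}M(\Phi(w))|V|^{\frac12},\smash{\overline{V}}^{\frac12}\rangle$ uniformly $\calO(|\beta|^3)$ on $\Omega$ by \eqref{M.bdd}. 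The middle quadratic form I would expand around $w=0$: in $d=1$ the identity
\[
\ResolventKernel{1}(x,y;z) - \funcg{1}(z) + \tfrac{|x-y|}{2} = \tfrac{1}{2z}\big[e^{-z|x-y|}-1+z|x-y|\big]
\]
together with the pointwise bound $|e^{-s}-1+s|\leq|s|^2/2$ for $\Re s\geq 0$ gives an integrand controlled by $|\Phi(w)||x-y|^2/4$, and the Rollnik integrability \eqref{def:Rollnik_space_main2} turns this into a remainder of size $\calO(|\beta|^2|w|)$; in $d=2$ the power series of $K_0$ provides the analogous expansion whose error is likewise $\calO(|\beta|^2|w|)$. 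The constant $\beta^2$-contributions reproduce $\funcU(\beta)$ exactly, and the rest together with $\tau_\beta$ forms $r_\beta(w)$ obeying \eqref{error:rbeta:Phi}; holomorphicity of $r_\beta$ is automatic from $r_\beta(w)=w(1-\fbeta{d}(\Phi(w)))-\funcU(\beta)$ and $0\notin\Omega\subset\C_+$.

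For (iii), any root $w\in\Omega$ of $\fbeta{d}\circ\Phi$ satisfies $w-\funcU(\beta)=r_\beta(w)$ by (ii), hence $|w-\funcU(\beta)|\leq C_r|\beta|^2\max\{|\beta|,|w|\}$. A short bootstrap from $|w|\leq|\funcU(\beta)|+|r_\beta(w)|$ combined with \eqref{Ucal.beta.equiv} gives $|w|\leq 3|\funcU(\beta)|$ for $|\beta|$ small, and resubstituting yields $|w-\funcU(\beta)|\leq C_\mathcal{U}|\funcU(\beta)|^3$ with an appropriate $C_\mathcal{U}>0$. The main technical obstacle I expect will be the $d=2$ case of (ii): the expansion of $K_0$ carries a logarithmic remainder of the form $t^2|\log t|$ in $t=\Phi(w)|x-y|$, so controlling its integral against the Rollnik weight $|V(x)||\mu^{(2)}(x-y)|^2|V(y)|$ requires exploiting the exponential decay $|\Phi(w)|\leq e^{-c/|w|}$ as $|w|\to 0$ in $\Omega$ (which holds since $|\Im(1/w)|<\tfrac14$ forces $\Re(1/w)\gtrsim 1/|w|$) in order to absorb auxiliary polynomial and logarithmic factors of $|x-y|$ that are not directly dominated by \eqref{def:Rollnik_space_main2}; the corresponding step in $d=1$ is immediate thanks to the clean bound $|e^{-s}-1+s|\leq|s|^2/2$.
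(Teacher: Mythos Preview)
Your approach to (i), (iii), and the algebraic skeleton of (ii) coincides with the paper's: the same spectral enclosure gives $z\in\disc{0}{\frac12}\cap\C_+$, the same Neumann expansion splits off the $\beta^3$-tail $\tau_\beta$, and the same bootstrap in (iii) localizes roots to $\disc{\funcU(\beta)}{C_\calU|\funcU(\beta)|^3}$. The $d=1$ remainder estimate is exactly the paper's (your bound $|e^{-s}-1+s|\le|s|^2/2$ is Lemma~\ref{lemma:ineq:weber_kernels}(i)(b)).

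The one place where your proposal has a genuine gap is the $d=2$ remainder in (ii). Your suggestion to ``absorb auxiliary polynomial factors of $|x-y|$'' via the exponential smallness $|\Phi(w)|\le e^{-c/|w|}$ cannot work as stated: the integral $\int_{\R^4}|V(x)|\,|x-y|^2\,|V(y)|\,\dmath(x,y)$ is in general \emph{infinite} under the assumptions \eqref{assumption:family_v2} and \eqref{def:Rollnik_space_main2} (the latter controls only $|\log|x-y||^2$), so no prefactor, however small, rescues it. The paper instead splits the domain at the $z$-dependent scale $|x-y|=|z|^{-1/2}$. On $\{|x-y|<|z|^{-1/2}\}$ the $K_0$-expansion gives the \emph{pointwise} (in $x,y$) bound $|\ResolventKernel{2}-\funch{2}|\le C|z||{\log(z)}|$ (Lemma~\ref{lemma:ineq:weber_kernels}(ii)(c)), so the integral is $\le C\|V\|_{L^1}^2|z||{\log(z)}|\le C/|{\log(z)}|=C|w|$ since $|z||{\log(z)}|^2$ is bounded on $\disc{0}{\frac12}\cap\C_+$. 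On the complementary region $\{|x-y|\ge|z|^{-1/2}\}$ one has $\log|x-y|\ge\tfrac12|\log|z||$, which allows the global bound $|\ResolventKernel{2}-\funch{2}|\le C(1+\log|x-y|)$ (Lemma~\ref{lemma:ineq:weber_kernels}(ii)(b)) to be rewritten as $C\log^2|x-y|\cdot|{\log|z|}|^{-1}$, and \emph{now} the Rollnik condition \eqref{def:Rollnik_space_main2} applies. Thus the factor $|w|$ is extracted from the logarithmic weight itself, not from the exponential smallness of $\Phi(w)$.
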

\begin{proof}
(i) From Proposition~\ref{prop:characterization_with_phi} we already know that for all $w\in\Omega$
such that $\Phi(w) \in \calN_\beta$ one has
\begin{align*}
    -\Phi(w)^2 \in \sigma_{\rm{p}}(H_\beta) \iff \fbeta{d}(\Phi(w)) = 0,
\end{align*}
in which case the multiplicity of $ w $ as a root of $ \fbeta{d} \circ \Phi $ coincides with
the algebraic multiplicity of the eigenvalue $ -\Phi(w)^2 \in \sigma_{\rm{p}}(\Hbeta) $.
Thus, by Lemma~\ref{lemma:M_eps_error_general} the inclusion $(\supset)$ in \eqref{EV.equiv.main} is clear. For the remaining inclusion $(\subset)$ in \eqref{EV.equiv.main}
it suffices to show that for every eigenvalue $-z^2$ of $ \Hbeta $, $ z \in \C_+ $,
there exists
\begin{align*}
    w \in \Omega \text{ with } \Phi(w) \in \calN_\beta,
\end{align*}
such that $ \Phi(w) = z $. So let $ z \in \C_+ $ such that $ -z^2 \in \sigma_{\rm{p}}(\Hbeta) $,
then \eqref{spectral_inclusion.new} implies $ z \in \disc{0}{\frac12} \cap \C_+ $
for all sufficiently small $ \beta \in \C $.  Since
$ \Phi : \Omega \rightarrow \disc{0}{\frac12} \cap \C_+ $
is bijective, there exists $ w \in \Omega $ such that $ \Phi(w) = z $.
Finally,
we have \eqref{D.Nb}
for all sufficiently small $ \beta$, and hence
$ z = \Phi(w) \in \calN_\beta $.

(ii)  Fix $ \epsilon_0 > 0 $ such that for all
$ \beta \in \disc{0}{\epsilon_0} $ we have $ \disc{0}{\frac12} \cap \C_+ \subset \calN_\beta $;
note that the latter also implies
$ \Phi(w) \in \calN_\beta $ for $w\in\Omega$.
From Proposition~\ref{prop:characterization_with_phi}(ii) we have \eqref{Lam.transf} where we further expand $ [1 - \beta M(\Phi(w))]^{-1} $ in order to show that
\begin{align*}
	U \beta + \beta
	\spf[big]{[1 - \beta M(\Phi(w))]^{-1}
		\beta M(\Phi(w)) |V|^\frac{1}{2}}{\smash{\overline{V}^{\frac{1}{2}}}}
		=
	\funcU(\beta) + r_\beta(w),
\end{align*}
where $ \funcU(\beta) $ is as in \eqref{def:a_eps} and $ r_\beta $ has the claimed properties.
To simplify the notation, we set $ z = \Phi(w) $; then
\begin{equation}
\begin{aligned} \label{proof:decomp_A_eq1}
	&U \beta  + \beta\spf[big]{[1 - \beta M(z)]^{-1}
		\beta M(z) |V|^\frac{1}{2}}{\smash{\overline{V}^{\frac{1}{2}}}}  \\
	&\hspace{0.5cm}=
	U \beta  + \beta\spf[big]{[1 - \beta M(z)]^{-1}
		[1 - \beta M(z) + \beta M(z)] \beta M(z) |V|^\frac{1}{2}}{\smash{\overline{V}^{\frac{1}{2}}}} 	\\	
	&\hspace{0.5cm}=
		U \beta  + \beta^2 \spf[big]{M(z)|V|^{\frac{1}{2}}}{\smash{\overline{V}^{\frac{1}{2}}}} + \beta \calI_{\beta}(z),
\end{aligned}
\end{equation}
where
\begin{align} \label{proof:decomp:calI}
	\calI_{\beta}(z) := \spf[big]{[1 - \beta M(z)]^{-1} (\beta M(z))^2 |V|^{\frac12}}{\smash{\overline{V}^{\frac{1}{2}}}}.
\end{align}
In order to expand \eqref{proof:decomp_A_eq1} further, recall that $ M(z) $ is an integral operator in $ L^2(\R^d) $ with
integral kernel
\begin{align} \label{proof:decomp:kernel_M}
	\KernelM{d}(x,y;z) = |V(x)|^{\frac12} \big( \ResolventKernel{d}(x,y;z) - \funcg{d}(z) \big) V(y)^{\frac12}.
\end{align}
Thus, we find
\begin{equation} \label{proof:decomp_A_eq3}
\begin{aligned}
	\beta^2 \spf[big]{M(z)|V|^{\frac{1}{2}}}{\smash{\overline{V}^{\frac{1}{2}}}}
		&=
	\beta^2 \int_{\R^{2d}} V(x) \big( \ResolventKernel{d}(x,y;z) - \funcg{d}(z) \big) V(y) \dmath(x,y) \\
		&=
	\beta^2 \int_{\R^{2d}} V(x) \big( \funch{d}(x,y;z) - \funcg{d}(z) \big) V(y) \dmath(x,y) \\
		&\qquad\qquad + \beta^2 \errorR{d}(z),
\end{aligned}
\end{equation}
where
\begin{align} \label{proof:decomp:errorR}
	\errorR{d}(z)
		:=
	\int_{\R^{2d}} V(x) \left( \ResolventKernel{d	}(x,y;z) - \funch{d}(x,y;z) \right) V(y) \dmath(x,y)
\end{align}
for the function
\begin{align*}
	\funch{d}(x,y;z)
		=
	\begin{cases}
		(2z)^{-1} \left( 1 - z\abs[]{x-y} \right) & \text{ if } d = 1, \\
		-(2\pi)^{-1} \left( \log(z|x-y|) - \log(2) + \gamma \right) & \text{ if } d = 2.
	\end{cases}
\end{align*}
Next, we see that the definition \eqref{def:g^d} of $ \funcg{d} $ implies
\begin{equation}
\begin{aligned}
	\funch{d}(x,y;z) - \funcg{d}(z)
		&=
	\begin{cases}
		-2^{-1} |x-y| & \text{ if } d = 1, \\
		-(2\pi)^{-1} \left( \log|x-y| - \log(2) + \gamma \right) & \text{ if } d = 2,
	\end{cases} \\
		&=
	\begin{cases}
		-\mu^{(1)}(x-y) & \text{ if } d = 1, \\
		-\left[ \mu^{(2)}(x-y) - (2\pi)^{-1}\left( \log(2) - \gamma\right) \right] & \text{ if } d = 2,
	\end{cases}
\end{aligned}
\end{equation}
which together with the definition \eqref{ujussi} of $ U_1 $ and
\eqref{def:a_eps} of $ \funcU(\beta) $ shows
\begin{equation*}
\begin{aligned}
	U \beta + \beta^2 &\int_{\R^{2d}} V(x) \big( \funch{d}(x,y;z) - \funcg{d}(z) \big) V(y) \dmath(x,y)
		=
	\funcU(\beta) .
\end{aligned}
\end{equation*}
The last identity, combined with \eqref{proof:decomp_A_eq3}, yields
\begin{equation*}
\begin{aligned}
	U \beta  + \beta^2 \spf[big]{M(z)|V|^{\frac{1}{2}}}{\smash{\overline{V}^{\frac{1}{2}}}}
		=
	\funcU(\beta) + \beta^2 \errorR{d}(z),
\end{aligned}
\end{equation*}
so together with \eqref{proof:decomp_A_eq1} we finally arrive at
\begin{equation*}
\begin{aligned}
	U\beta  + \beta\spf[big]{[1 - \beta M(z)]^{-1}
		\beta M(z) |V|^\frac{1}{2}}{\smash{\overline{V}^{\frac{1}{2}}}} =
	\funcU(\beta) + \beta^2 \errorR{d}(z) + \beta \calI_{\beta}(z)
\end{aligned}
\end{equation*}
which means that the claimed expansion \eqref{expansion:fbeta:Phi}
holds with
\begin{align} \label{proof:decomp:r_beta}
	r_{\beta}(w) := \beta^2 \errorR{d}(\Phi(w)) + \beta \calI_{\beta}(\Phi(w)),
\end{align}
where $ \calI_{\beta} $ and $ \errorR{d} $ are given by \eqref{proof:decomp:calI} and
\eqref{proof:decomp:errorR}, respectively, and we
resubstituted $ z = \Phi(w) $ with $ w \in \Omega$.

We now show that the function $ r_\beta $ in \eqref{proof:decomp:r_beta} has the claimed
properties.
First, it is clear that $ r_\beta $ is holomorphic on $ \Omega $ as \eqref{expansion:fbeta:Phi} can 
be rearranged to 
\begin{align*}
	r_\beta(w) = w\left(1-\fbeta{d}(\Phi(w))\right) - \funcU(\beta)
\end{align*}
and both $ \fbeta{d} : \calN_\beta\to\C $ and $ \Phi : \Omega \to \disc{0}{\frac12} \cap \C_+ \subset \calN_\beta $ 
are holomorphic on their respective domains of definition for all sufficiently small $ \beta $.

Thus, it remains to show the claimed upper bound \eqref{error:rbeta:Phi}.
To this end, let $ w \in \Omega $, then \eqref{proof:decomp:r_beta} implies
\begin{align} \label{proof:decomp:r_beta_triangle_inequality}
	|r_\beta(w)|
		\leq
	|\beta|^2 |\errorR{d}(\Phi(w))| + |\beta| |\calI_{\beta}(\Phi(w))|.
\end{align}
We start by deriving an upper bound for the second term above.
Recall that $ \Phi(w) \in \disc{0}{\frac12} \cap \C_+ \subset \calN_\beta$  for all $ \beta \in \disc{0}{\epsilon_0} $, thus $ \| \beta M(\Phi(w)) \| < 1/2$.
The definition \eqref{proof:decomp:calI} of $ \calI_{\beta}(z) $, the Cauchy-Schwarz inequality and \eqref{M.bdd} imply
\begin{equation} \label{proof:decomp:estimate_second_term_1}
\begin{aligned}
	|{\beta}||\calI_{\beta}(\Phi(w))|
	&\leq
	|\beta|\| V^\frac12 \|_{L^2} \| [1 - \beta M(\Phi(w))]^{-1} \| \| \beta M(\Phi(w)) \|^2
	\| |V|^\frac12 \|_{L^2} \\
		&\leq
	2 \| V\|_{L^1} C_M^2 |\beta|^3.
\end{aligned}
\end{equation}
Next, we find an upper bound for $ \errorR{d}(\Phi(w)) $. In fact, we show below
\begin{align} \label{estimate_r:t}
	|\errorR{d}(z)| \leq C|\funcg{d}(z)|^{-1},
	\quad  z \in \discBig{0}{\frac12} \cap \C_+,
\end{align}
and
since $ \funcg{d}(\Phi(w)) = 1/w $, $ w \in \Omega $, the latter inequality
implies
\begin{align} \label{estimate_r_phi}
	|\errorR{d}(\Phi(w))| \leq C|w|,
	\quad
	 w \in \Omega.
\end{align}
Hence, together with \eqref{proof:decomp:r_beta_triangle_inequality} and
the estimate \eqref{proof:decomp:estimate_second_term_1} we find
\begin{align*}
	|r_\beta(w)|
		\leq
	C |\beta|^2 |w| + C |\beta|^3
	\leq
	2 C |\beta|^2 \max\left\lbrace |\beta|, |w| \right\rbrace,
\end{align*}
which leads to claim \eqref{error:rbeta:Phi}.

We establish the remaining estimate \eqref{estimate_r:t}. For $ d = 1 $, Lemma~\ref{lemma:ineq:weber_kernels}(i)(b) yields
\begin{align*}
	|\ResolventKernel{1}(x,y;z) - \funch{1}(x,y;z)|
		\leq
	\frac{1}{4}|z||x-y|^2
		=
	\frac{1}{2}|\mu^{(1)}(x-y)|^2 |\funcg{1}(z)|^{-1}
\end{align*}
where we used $ \funcg{1}(z) = (2z)^{-1} $ and the definition \eqref{def:mu_main}
of $ \mu^{(1)} $; it follows that
\begin{align*}
	|\errorR{1}(z)|
		\leq
	\frac{1}{2} |\funcg{1}(z)|^{-1} \int_{\R^2} |V(x)||\mu^{(1)}(x-y)|^2 |V(y)|\dmath(x,y)
		\leq
	C |\funcg{1}(z)|^{-1}
\end{align*}
which is the claimed estimate \eqref{estimate_r:t} for $d=1$.

Suppose next that $ d = 2 $. By introducing the regions
\begin{align} \label{proof:def:omega_1D_errorR}
	\Omega_1(z) &= \lbrace (x,y) \in \R^{4} : |x - y| < |z|^{-\frac12} \rbrace, \quad \Omega_2(z) = \Omega_1(z)^c,
\end{align}
it is clear that
\begin{align*}
	|\errorR{2}(z)|
		\leq
	\calJ_1^{(2)}(z) + \calJ_2^{(2)}(z)
\end{align*}
where
\begin{align*}
	\calJ_k^{(2)}(z) := \int_{\Omega_k(z)} |V(x)| |\ResolventKernel{2}(x,y;z) - \funch{2}(x,y;z)| |V(y)| \dmath(x,y), \quad k \in \lbrace 1, 2 \rbrace.
\end{align*}

We start by estimating $ \calJ_1^{(2)}(z) $. First, note that
for $ (x,y) \in \Omega_1(z) $ we can apply Lemma~\ref{lemma:ineq:weber_kernels}(ii)(c)
and obtain the inequality
\begin{align*}
	|\ResolventKernel{2}(x,y;z) - \funch{2}(x,y;z)|
		\leq
	C |z||{\log(z)}|
		=
	\frac{C |z||{\log(z)}|^2}{|{\log(z)}|}
		\leq
	C|\funcg{d}(z)|^{-1},
\end{align*}
where we used in the last estimate that $ \funcg{2}(z) = -(2\pi)^{-1}\log(z) $
and that the function $ \disc{0}{\frac12} \cap \C_+ \ni z \mapsto |z||{\log(z)}|^2 $
is bounded. In particular,
\begin{align*}
	\calJ_1^{(2)}(z)
		\leq
	C\norm{V}_{L^1(\R^2)}^2 |\funcg{d}(z)|^{-1}.
\end{align*}

Next, since $ z \in \disc{0}{\frac12}$, we obtain for all $ (x,y) \in \Omega_2(z) $ that
\begin{align} \label{proof:r_t_estimate_log}
	\sqrt{2} < |z|^{-\frac{1}{2}} \leq |x-y|
	\iff
	\frac{1}{2} \log(2) < \frac{1}{2} \logabs{z} \leq \log|x-y|,
\end{align}
and hence
\begin{align*}
	\frac{1}{\logabs{x-y}}
		\leq
	\frac{2}{\logabs{z}}
		\leq
	\frac{C}{|{\log(z)}|}		
		\leq
	C|\funcg{2}(z)|^{-1},
	\quad  (x,y) \in \Omega_2(z),
\end{align*}
where we used the inequality \eqref{ineq:log(z)} in the second
estimate.
The last inequality, together with Lemma~\ref{lemma:ineq:weber_kernels}(ii)(b) implies
\begin{align*}
	|\ResolventKernel{2}(x,y;z) - \funch{2}(x,y;z)|
		&\leq
	C \left(1 + \log|x-y| \right)
		\leq
	C \log|x-y|  \\
		&=
	\frac{C \logabs{x-y}^2}{\logabs{x-y}}
		\leq
	C \logabs{x-y}^2 |\funcg{2}(z)|^{-1}
\end{align*}
for all $ (x,y) \in \Omega_2(z) $, where we also used \eqref{proof:r_t_estimate_log}
for the second inequality. Thus
\begin{align*}
	\calJ_2^{(2)}(z)
		\leq
	\frac{C}{|\funcg{2}(z)|} \int_{\Omega_2(z)} |V(x)| \logabs{x-y}^2 |V(y)| \dmath(x,y)
		\leq
	C |\funcg{2}(z)|^{-1};
\end{align*}
cf.~\eqref{def:Rollnik_space_main2}.
By summing up the respective estimates for $ \calJ_1^{(2)}(z) $ and $ \calJ_2^{(2)}(z) $,
\eqref{estimate_r:t} follows.

(iii) Note that $ \fbeta{d}(\Phi(w)) = 0 $ and item (ii) together imply
\begin{align} \label{proof:properties_zero_assump}
	w = \funcU(\beta) + r_\beta(w).
\end{align}
The inequality \eqref{error:rbeta:Phi}  and the fact that $\Omega$ is bounded (see Lemma~\ref{lemma:Phi})
imply
\begin{equation}\label{w0.est}
	|w|
	\leq
	|\funcU(\beta)| + |r_\beta(w)|
		\leq
	|\funcU(\beta)| + C_r |\beta|^2 \max\left\lbrace |\beta|, |w| \right\rbrace
	\leq 	|\funcU(\beta)| + \widetilde C_r |\beta|^2
\end{equation}
for all sufficiently small $ \beta \in \C $.
Next, since $ U \neq 0 $ by assumption, by \eqref{Ucal.beta.equiv} and \eqref{w0.est} we obtain that for all sufficiently small $ \beta \in \C $
\begin{align*}
	|w|
		\leq
	|\funcU(\beta)|( 1 + \widetilde C_r m_\calU^2 |\funcU(\beta)|)
		\leq
	2|\funcU(\beta)|.
\end{align*}
The last inequality, reinserted into the estimate \eqref{error:rbeta:Phi} for $ r_\beta $ and \eqref{proof:properties_zero_assump} shows that
\begin{align*}
	|w - \funcU(\beta)|
		=
	|r_\beta(w)|
		\leq
	C_r |\beta|^2 \max\left\lbrace |\beta|, |w| \right\rbrace
		\leq
	C_\mathcal U |\funcU(\beta)|^3,
\end{align*}
where we again used \eqref{Ucal.beta.equiv} to estimate $ |\beta| $ against $ |\funcU(\beta)| $.
\qedhere
\end{proof}

\section{Proofs of the main results} \label{section:main_results}

In the upcoming proofs, many of the appearing estimates are only true if $ \beta $ is sufficiently small
and confined to some particular area in the complex plane. Thus, when we write
\begin{align} \label{convention:beta}
    \beta \to 0,
\end{align}
we actually mean that $ |\beta|<\epsilon $ for some $ \epsilon > 0 $ that might change from line to line, and that
$ \beta $ is confined to a particular region, such as \eqref{cond:(i)} or \eqref{cond:(ii)}; which one will be clear from the context.

\subsection{Proof of Theorem \ref{theorem:s=2}}
The next lemma is a useful technical ingredient in the
proof of Theorem~\ref{theorem:s=2}.

\begin{lemma} \label{lemma:Omega.inclusion}
Let $ \funcU(\beta) $, $ \beta\in \C $, be defined
as in \eqref{def:a_eps} and let $ \Omega $ be given by \eqref{def:Omega_Phi}.
Then for every $ K > 0 $ there exist $ \const > 0 $ and $ \epsilon > 0 $ such that for all
$ |\beta| < \epsilon $ the following assertions hold.
\begin{enumerate}
    \item If $ \beta $ satisfies \eqref{cond:(i)}, that is
    \begin{equation} \label{cond(i).lemma}
    \begin{aligned}
        \Re\funcU(\beta) &> \const|{\Im\funcU(\beta)}|^{3}
            & \text{ if } d&=1,
            \\
        \Re\funcU(\beta) &> 2|{\Im\funcU(\beta)}|^{\frac12} \big( 1 + \const |{\Re\funcU(\beta)}|\big)
         & \text{ if } d&=2,
    \end{aligned}
    \end{equation}
    then $ \disc{\funcU(\beta)}{K|\funcU(\beta)|^3} \subset \Omega $.
    \item If $ \beta $ satisfies \eqref{cond:(ii)}, that is
    \begin{equation} \label{cond(ii).lemma}
    \begin{aligned}
        \Re\funcU(\beta) &< -\const|{\Im\funcU(\beta)}|^{3}     & \text{ if } d&=1,
        \\
        \Re\funcU(\beta) &< 2|{\Im\funcU(\beta)}|^{\frac12} \big( 1 - \const |{\Re\funcU(\beta)}|\big)     & \text{ if } d&=2,
    \end{aligned}
    \end{equation}
    then $ \disc{\funcU(\beta)}{K|\funcU(\beta)|^3} \cap \Omega = \emptyset $.
\end{enumerate}
\end{lemma}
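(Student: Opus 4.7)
Abbreviating $u := \funcU(\beta)$, $a := \Re u$, $b := \Im u$ (so $|u| \to 0$ as $|\beta|\to 0$), my plan is to translate the geometric statement about $\disc{u}{K|u|^3}$ using the explicit description of $\Omega$ from Lemma~\ref{lemma:Phi} and verify the resulting inequalities by elementary planar estimates, choosing $\const$ sufficiently large in terms of $K$.

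For $d=1$, $\Omega = \disc{0}{1}\cap\C_+$, and $\disc{u}{K|u|^3}\subset\disc{0}{1}$ holds automatically for small $|\beta|$; hence (i) reduces to $\Re u > K|u|^3$ and (ii) to $\Re u < -K|u|^3$. For (i) I would split into the cases $|a|\geq |b|$ (where $|u|^3 \leq 2\sqrt{2}|a|^3$, so $K|u|^3 \leq 2\sqrt{2}Ka^2\cdot a < a$ for small $|\beta|$) and $|a|<|b|$ (where $|u|^3\leq 2\sqrt{2}|b|^3$, so choosing $\const \geq 2\sqrt{2}K$ in \eqref{cond(i).lemma} gives $\Re u > \const|b|^3 \geq K|u|^3$); the argument for (ii) is symmetric.

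For $d=2$, I would use the alternative form $\Omega = \disc{\pi/\log(2)}{\pi/\log(2)} \cap \{w : |\Im(1/w)|<1/4\}$ from \eqref{omega_alt_d=2}, equivalent to the pair of inequalities $\Re w > (\log(2)/(2\pi))|w|^2$ and $4|\Im w| < |w|^2$. For (i), the first is non-binding in the relevant regime (since the hypothesis forces $a$ to dominate $|u|^2$); the second, after estimating $|\Im w|\leq |b|+K|u|^3$ and $|w|^2 \geq |u|^2 - 2K|u|^4$, reduces to a perturbation of $4|b| < a^2 + b^2$ with error $O(K|u|^3)$. Squaring \eqref{cond(i).lemma} yields $a^2 > 4|b|(1+\const a)^2 \geq 4|b| + 8\const a|b|$; since the hypothesis forces $|b| = O(a^2)$ and hence $|u|^3 = O(a^3)$, the margin $a^2 - 4|b| \geq 8\const a|b|$ is of order $\const a^3$ and beats the $O(Ka^3)$ error for $\const$ large. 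For (ii) I would show that every $w\in\disc{u}{K|u|^3}$ fails at least one defining inequality of $\Omega$: if $a \leq -K|u|^3$ then $\Re w \leq 0$ throughout the disc, while otherwise \eqref{cond(ii).lemma} (through squaring when $a \geq 0$, or trivially when $a\in(-K|u|^3,0]$ since then $a^2 \leq K^2|u|^6 \ll |b|$) produces a gap $4|b| - a^2 \geq C(\const)|b|^{3/2}$; since $|u|^3 = O(|b|^{3/2})$ in this regime, that gap forces $4|\Im w| > |w|^2$ throughout the disc for $\const$ chosen large enough.

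The main technical obstacle is the $d=2$ bookkeeping: the boundary $\partial\Omega$ meets the origin through the parabolic cusp $4|b| = |w|^2$, so in the relevant regime $a$ and $|b|^{1/2}$ are comparable while $|u|^2$ and $|b|$ are comparable. The $\const$-margin extracted from \eqref{cond(i).lemma}/\eqref{cond(ii).lemma} after squaring is of order $\const|b|^{3/2}$, precisely matching the $K|b|^{3/2}$ scale of the errors $K|u|^3$ generated by the disc radius; the entire argument then hinges on choosing $\const = \const(K)$ large enough to absorb all the implicit constants in these asymptotic expansions.
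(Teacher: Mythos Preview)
Your approach is essentially correct and matches the paper's strategy: translate membership in $\Omega$ into explicit real inequalities and show that the hypothesis provides enough margin (scaling with $R$) to absorb the $K|u|^3$ perturbation from the disc radius. For $d=1$ your case split $|a|\gtrless|b|$ is equivalent to the paper's use of $|u|^3\leq 4|a|^3+4|b|^3$.

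For $d=2$ there is a cosmetic difference: you work in $w$-coordinates with the conditions $\Re w > (\log 2/(2\pi))|w|^2$ and $4|\Im w|<|w|^2$, while the paper stays in $1/w$-coordinates and expands $\Re(1/w)$ and $|\Im(1/w)|$ around $1/\funcU(\beta)$. Both routes are valid; yours is arguably more direct, while the paper's makes the critical threshold $|\Im(1/w)|=1/4$ more transparent. One point to tighten in your sketch: the claim ``the margin $a^2-4|b|\geq 8Ra|b|$ is of order $Ra^3$'' is only literally true when $|b|$ is comparable to $a^2$; when $|b|\ll a^2$ the margin $8Ra|b|$ may be $\ll Ra^3$, but then $a^2-4|b|$ is itself of order $a^2\gg a^3$ and the inequality holds for a different reason. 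A clean way to phrase it is that $a^2-4|b|\geq a^2\bigl(1-(1+Ra)^{-2}\bigr)\sim 2Ra^3$, which gives the uniform lower bound you need. The same remark applies to the (ii) case.
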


\begin{proof}
We start with the case $ d = 1 $, where
$ {\Omega = \disc{0}{1} \cap \C_+} $ according to \eqref{Omegajussi1}.
If $ w \in \disc{\funcU(\beta)}{K|\funcU(\beta)|^3} $
it is clear that $|w|<1$ as $ \beta \to 0 $. Hence, (i) and (ii) follow if we show that \eqref{cond(i).lemma} and
\eqref{cond(ii).lemma} with a sufficiently large $R>0$ imply $ \Re w > 0 $ and $ \Re w < 0 $ as $ \beta\to 0 $, respectively.
To this end note that $ w \in \disc{\funcU(\beta)}{K|\funcU(\beta)|^3} $ and the elementary inequality
\begin{align*}
    |a|^3 \leq 4|{\Re a}|^3 + 4|{\Im a}|^3, \quad a \in \C,
\end{align*}
imply
\begin{equation} \label{proof:s2:Re_d=1}
    \begin{aligned}
        \Re w
        &=
        \Re\funcU(\beta) + \Re(w - \funcU(\beta)) \\
        &=
        \Re\funcU(\beta) + \calO(|\funcU(\beta)|^3)
        \\
        &=
        \Re\funcU(\beta) + \calO([\Re\funcU(\beta)]^3) + \calO([\Im\funcU(\beta)]^3) \\
            &=
        \Re\funcU(\beta) \bigg(
        1 + \calO([\Re\funcU(\beta)]^2) + \calO\bigg( \frac{[\Im\funcU(\beta)]^3}{\Re\funcU(\beta)} \bigg)
        \bigg)
        \\
        &
        = \Re\funcU(\beta) \left(1 + \calO([\Re\funcU(\beta)]^2) + \frac1 \const \calO\left(1\right) \right),
        \quad\beta\to 0,
    \end{aligned}
\end{equation}
where the last asymptotic estimate follows after noticing that \eqref{cond(i).lemma} and \eqref{cond(ii).lemma} can be rearranged
to obtain $ |{\Im\funcU(\beta)}|^3 / |{\Re\funcU(\beta)}| < 1/\const $. Thus, \eqref{proof:s2:Re_d=1} implies that for $ \const > 0 $
sufficiently large
the numbers $ \Re w $ and $ \Re\funcU(\beta) $ have the same sign as $ \beta\to 0 $. Hence, as $ \beta\to 0 $ we have
$ \Re w > 0 $ if \eqref{cond(i).lemma} holds, i.e.,~$ w \in \Omega $, whereas \eqref{cond(ii).lemma} implies
$ \Re w < 0 $ and thus $ w \notin \Omega $; this proves (i) and (ii) for $ d = 1 $.

Let us continue with $ d = 2 $, where
\begin{align} \label{proof:Omega}
    \Omega
    =
    \left\lbrace w \in \C_+ : \Re\left( \frac{1}{w} \right) > \frac{\log(2)}{2\pi}, \: \abs{\Im\left( \frac{1}{w} \right)}<\frac{1}{4} \right\rbrace
\end{align}
by \eqref{Omegajussi2}. We start with some preliminary observations and inequalities.
Note first that $ w \in \disc{\funcU(\beta)}{K|\funcU(\beta)|^3} $ implies
\begin{equation}\label{proof:s2:ineq_1/w}
  \frac{c_1}{|\funcU(\beta)|} \leq \frac{1}{|w|} \leq \frac{c_2}{|\funcU(\beta)|}, \quad \beta \to 0,
\end{equation}
for some $c_1,c_2>0$,
and hence also
\begin{equation}\label{proof:s2:ineq_1/w-}
 \abs{\frac{1}{w} - \frac{1}{\funcU(\beta)}} = \abs{\frac{\funcU(\beta)-w}{w\funcU(\beta)}}
    \leq \frac{c_2 K |\funcU(\beta)|^3}{\vert \funcU(\beta)\vert^2}
    = \calO(|{\funcU(\beta)}|), \quad \beta \to 0.
\end{equation}
In particular, for $ w \in \disc{\funcU(\beta)}{K|{\funcU(\beta)}|^3} $ one verifies for $ \beta\to 0 $
\begin{equation} \label{Re(1/w)}
\begin{aligned}
    \Re\left( \frac{1}{w} \right)
        &=
    \Re\left( \frac{1}{\funcU(\beta)} \right) + \Re\left( \frac{1}{w} - \frac{1}{\funcU(\beta)} \right)
        =
    \frac{\Re\funcU(\beta)}{|{\funcU(\beta)}|^2} + \calO(|{\funcU(\beta)}|),
\end{aligned}
\end{equation}
and in a similar fashion for $ \beta\to 0 $
\begin{equation} \label{Im(1/w)}
    \abs{\Im\left( \frac{1}{w} \right)}
        =
    \frac{|{\Im\funcU(\beta)}|}{|{\funcU(\beta)}|^2} + \calO(|{\funcU(\beta)}|).
\end{equation}
As another preparatory step, we claim the following: Whenever
\begin{align} \label{ImRe2}
    |{\Im\funcU(\beta)}| < (\Re\funcU(\beta))^2,\quad\beta\to 0,
\end{align}
then we also have
\begin{equation} \label{Re=U}
\begin{aligned}
    |\funcU(\beta)| &= |{\Re\funcU(\beta)}| \left( 1+  \calO( [\Re\funcU(\beta)]^2) \right),\quad\beta\to 0, \\
    \frac{1}{|{\funcU(\beta)}|^2} &=
    \frac{1}{(\Re\funcU(\beta))^2} \left( 1+  \calO( [\Re\funcU(\beta)]^2) \right),\quad\beta\to 0.
\end{aligned}
\end{equation}
Indeed, note that \eqref{ImRe2} implies
\begin{equation*}
 |\funcU(\beta)|^2=(\Re\funcU(\beta))^2+(\Im\funcU(\beta))^2=(\Re\funcU(\beta))^2
 \bigl(1+ \calO( [\Re\funcU(\beta)]^2)\bigr), \quad\beta\to 0,
\end{equation*}
so the claim follows after taking both sides above to the power of
$ \frac12 $ and $ -1 $, respectively, and then applying the expansion
\begin{equation}\label{taylor}
 (1+x)^b=1+b x+\calO(x^2),\quad b\in\mathbb R,
\end{equation}
with $ b = \frac12 $ and $ b = -1 $.

To conclude our preliminary observations, it is useful to notice that whenever \eqref{ImRe2} (and hence \eqref{Re=U}) hold, the latter can be
inserted into \eqref{Re(1/w)} and \eqref{Im(1/w)} to obtain the expansions
\begin{equation} \label{Re(1/w)Re=U}
\begin{aligned}
    \Re\left(\frac{1}{w}\right)
        &=
    \frac{1}{\Re\funcU(\beta)}\left( 1 + \calO([\Re\funcU(\beta)]^2) \right) + \calO(\Re\funcU(\beta)) \\
        &=
    \frac{1}{\Re\funcU(\beta)} + \calO(\Re\funcU(\beta)), \quad\beta\to 0,
    \\
\end{aligned}
\end{equation}
and
\begin{equation} \label{Im(1/w)Re=U}
	\begin{aligned}
	\abs{\Im\left( \frac{1}{w} \right)}
		&=
		\frac{|{\Im\funcU(\beta)}|}{(\Re\funcU(\beta))^2}
		\left( 1+  \calO( [\Re\funcU(\beta)]^2) \right)
		+ \calO(\Re\funcU(\beta)) \\
		&=
		\frac{|{\Im\funcU(\beta)}|}{(\Re\funcU(\beta))^2}
		+ \calO(\Im\funcU(\beta)) + \calO(\Re\funcU(\beta)) \\
		&=
		\frac{|{\Im\funcU(\beta)}|}{(\Re\funcU(\beta))^2}
		+ \calO(\Re\funcU(\beta)),\quad\beta\to 0.
	\end{aligned}
\end{equation}

Let us continue by proving (i) for $ d = 2 $, i.e.,~$ w \in \Omega $ as
$ \beta\to 0 $ if \eqref{cond(i).lemma} holds
for some sufficiently large $ R > 0 $.
Rearranging \eqref{cond(i).lemma} yields
\begin{align*}
    |{\Im\funcU(\beta)}|^{\frac12}< \frac{1}{2}\, \frac{\Re\funcU(\beta)}{1 + \const |{\Re\funcU(\beta)}\vert}
     < \Re\funcU(\beta),
\end{align*}
so we have \eqref{ImRe2}, and thus can use \eqref{Re(1/w)Re=U} and \eqref{Im(1/w)Re=U}. 
Since $ \Re\funcU(\beta) > 0 $ by \eqref{cond(i).lemma}, the expansion \eqref{Re(1/w)Re=U} implies 
$ \Re(1/w) \to +\infty $ as $ \beta\to 0 $, i.e.,~$ w $ satisfies the first condition in
\eqref{proof:Omega}.

Applying our assumption \eqref{cond(i).lemma} to \eqref{Im(1/w)Re=U} and using \eqref{taylor} with $ b = -2 $ in the next step leads to the estimate
(recall that $ \Re\funcU(\beta) > 0 $)
\begin{equation} \label{Im(1/w)<1/4}
\begin{aligned}
    \abs{\Im\left( \frac{1}{w} \right)}
        &<
    \frac{1}{4(1 + \const\Re\funcU(\beta))^2} + \calO(\Re\funcU(\beta)) \\
    &=
    \frac{1}{4}\left(1 - 2 \const \Re\funcU(\beta) + \calO( [\Re\funcU(\beta)]^2)\right)
     + \calO(\Re\funcU(\beta)) \\
&=
\frac{1}{4} - \frac{1}{2}\const \Re\funcU(\beta)  + \calO(\Re\funcU(\beta)) \\
    &=
\frac{1}{4} - \frac{1}{2}\Re\funcU(\beta) \left( \const + \calO(1) \right),\quad
\quad\beta \to 0.
\end{aligned}
\end{equation}
So for $ R > 0 $ sufficiently large $ w $ also satisfies the second condition in
\eqref{proof:Omega}, i.e.,~$ w \in \Omega $; the proof of
(i) is complete for $ d = 2 $.

It remains to prove (ii) in the case $d=2$, i.e., we show $ w \notin \Omega $ as $ \beta\to 0 $ if \eqref{cond(ii).lemma} is satisfied for a sufficiently large $ \const > 0 $.
For this it suffices to consider the situation where \eqref{ImRe2} holds, as the
converse inequality implies
\begin{align*}
    |\funcU(\beta)|^2 = (\Re\funcU(\beta))^2 + (\Im\funcU(\beta))^2 \leq |{\Im\funcU(\beta)}| + (\Im\funcU(\beta))^2,
\end{align*}
and hence with \eqref{Im(1/w)}
\begin{align*}
    \abs{\Im\left( \frac{1}{w} \right)}
        &\geq
    \frac{1}{1 + |{\Im\funcU(\beta)}|} + \calO(|{\funcU(\beta)}|)
    \to 1,
    \quad\beta\to 0,
\end{align*}
i.e.,~$ |{\Im(1/w)}| \geq \frac14 $ and hence $ w \notin \Omega $.

So suppose that \eqref{cond(ii).lemma} and \eqref{ImRe2} hold, in which case we may
use \eqref{Re(1/w)Re=U} and \eqref{Im(1/w)Re=U}. Note that \eqref{ImRe2} also ensures that 
$ \Re\funcU(\beta) \neq 0 $.
If $ \Re\funcU(\beta) < 0 $, then \eqref{Re(1/w)Re=U}
implies $ \Re(1/w) \to -\infty $ as $ \beta\to 0 $, that is $ w \notin \Omega $.
If $ \Re\funcU(\beta) > 0 $, then
\eqref{Im(1/w)Re=U} and the assumption \eqref{cond(ii).lemma} imply
\begin{equation*}
\begin{aligned}
\abs{\Im\left(\frac{1}{w}\right)}
    &>
\frac{1}{4(1 - \const\Re\funcU(\beta))^2} + \calO(\Re\funcU(\beta)) \\
    &=
\frac{1}{4}\left(1 + 2 \const \Re\funcU(\beta) + \calO( [\Re\funcU(\beta)]^2)\right)
+ \calO(\Re\funcU(\beta)) \\
    &=
\frac{1}{4} + \frac{1}{2}\Re\funcU(\beta) \left( \const + \calO(1) \right),\quad
\quad \beta \to 0,
\end{aligned}
\end{equation*}
where the second line follows from \eqref{taylor} with $ b = - 2 $;
cf.~\eqref{Im(1/w)<1/4}. Therefore $ \Im(1/w) \geq \frac{1}{4} $ as $ \beta\to 0 $ if
$ \const > 0 $ is chosen
sufficiently large, i.e.,~$ w \notin\Omega $, and the proof of (ii) for $ d = 2 $ is complete.
\end{proof}

We can now use Lemma~\ref{lemma:Omega.inclusion} and Lemma~\ref{lemma:rouche_eigenvalue_abstract}
to give a proof of Theorem~\ref{theorem:s=2}.

\subsubsection{Proof of \upshape{(i)}}

According to Proposition~\ref{prop:equivalence}(i) it suffices to prove the existence of
$ R > 0 $ such that $ {\fbeta{d} \circ \Phi} $ has exactly one simple zero for $ \beta\to 0 $
satisfying \eqref{cond:(i)}.
To this end, we use the expansion \eqref{expansion:fbeta:Phi} and Lemma~\ref{lemma:rouche_eigenvalue_abstract} with
\begin{align} \label{proof:s2:def_a_b_delta}
    a = \funcU(\beta),
    \quad
    b(w) = r_\beta(w),
    \quad
    \delta = K |\funcU(\beta)|^2,\quad\calO_a=\disc{\funcU(\beta)}{4K|\funcU(\beta)|^3},
\end{align}
where the constant $ K > 0 $ is sufficiently large and will be specified below. Notice that $\delta \to 0$ as $\beta \to 0$ and so Lemma~\ref{lemma:rouche_eigenvalue_abstract} will be applied for all sufficiently small $\beta$.

Choose $ R > 0 $ such that $ \disc{\funcU(\beta)}{4K|\funcU(\beta)|^3} \subset \Omega $
for $ \beta\to 0 $
satisfying \eqref{cond:(i)},
which is possible by Lemma~\ref{lemma:Omega.inclusion}(i). Assumption (i) in Lemma~\ref{lemma:rouche_eigenvalue_abstract} 
is obviously satisfied and
Assumption (ii) in Lemma~\ref{lemma:rouche_eigenvalue_abstract} holds since
$ r_\beta $ is holomorphic on $ \Omega $, and hence on
$\disc{\funcU(\beta)}{4K |\funcU(\beta)|^3} \subset \Omega $ for $\beta\rightarrow 0$.
Next, to verify assumption (iii) in Lemma~\ref{lemma:rouche_eigenvalue_abstract},
consider $ w \in \disc{\funcU(\beta)}{4K |\funcU(\beta)|^3} $ and use \eqref{error:rbeta:Phi} and
\eqref{Ucal.beta.equiv} (note that $m_\mathcal U\geq1$) to estimate
\begin{equation} \label{proof:s2:error_1}
    \begin{aligned}
        |r_\beta(w)|
        &\leq
        C_r |\beta|^2 \max\left\{ |\beta|,  |w| \right\} \\
        &\leq
        C_r m_\mathcal U^2 |\funcU(\beta)|^2 \max\left\{ m_\mathcal U|\funcU(\beta)|,  |\funcU(\beta)| + 4K |\funcU(\beta)|^3 \right\} \\
        &\leq
        C_r m_\mathcal U^3|\funcU(\beta)|^3 \left( 1 + 4K |\funcU(\beta)|^2 \right) .
    \end{aligned}
\end{equation}
Thus, by selecting $ K > 2 C_r m_\mathcal U^3 $ and noting that $1 + 4 K |\funcU(\beta)|^2 < 2$ for $\beta\rightarrow 0$ we conclude from \eqref{proof:s2:error_1} that
\begin{equation*}
    \begin{aligned}
        |r_\beta(w)|
        \leq
        2 C_r m_\mathcal U^3 |\funcU(\beta)|^3
        <
        K |\funcU(\beta)|^3, \quad \beta \to 0. 
    \end{aligned}
\end{equation*}
Hence, assumption (iii) in Lemma~\ref{lemma:rouche_eigenvalue_abstract} is satisfied and therefore
$ {\fbeta{d} \circ \Phi} $ has exactly one simple root $w_\beta$ in the disc $\disc{\funcU(\beta)}{K |\funcU(\beta)|^3}$.
Using Proposition~\ref{prop:equivalence}(i) it follows that $ \lambda_\beta = - \Phi(w_\beta)^2 $ is a simple eigenvalue
of $H_\beta$ for $\beta\rightarrow 0$.

Next we show that $ \lambda_\beta $ is the only discrete eigenvalues of
$ H_\beta $ as $ \beta\to 0 $.
For this it suffices to check that $ w_\beta $ is the only zero of $ {\fbeta{d}\circ\Phi} $ as $ \beta\to 0 $, according to 
Proposition~\ref{prop:equivalence}(i).
In fact, if $ \widetilde{w} \in \Omega$ is a root of
$ \fbeta{d} \circ \Phi $, then necessarily $\widetilde{w} \in \disc{\funcU(\beta)}{C_\mathcal{U} |\funcU(\beta)|^3}$ as $ \beta\rightarrow 0 $
by Proposition~\ref{prop:equivalence}(iii). However, selecting
$ K > \max\{ 2 C_r m_\mathcal U^3, C_\mathcal U \} $ above, we conclude that there exists  exactly one root in the disc
$ \disc{\funcU(\beta)}{K|\funcU(\beta)|^3} $, that is, $\widetilde w = w_\beta$.

It remains to verify the asymptotic expansion of $ \lambda_\beta = - \Phi(w_\beta)^2 $ claimed in \eqref{expansion:EV}.
Recall that we have shown above that the root $w_\beta$ of $ \fbeta{d} \circ \Phi $ lies in
$\disc{\funcU(\beta)}{K |\funcU(\beta)|^3}$ as $\beta \to 0$, hence by \eqref{Ucal.beta.equiv} we obtain
\begin{align} \label{proof:s2:expansion_w}
    w_\beta = \funcU(\beta) + \calO(\funcU(\beta)^3)
    = \funcU(\beta) + \calO(\beta^3), \quad \beta \to 0.
\end{align}

Therefore, if $ d = 1 $, then we have $\funcU(\beta) = U \beta - U_1 \beta^2$ (see \eqref{def:a_eps}) and
thus by \eqref{def:Phi},
\begin{align*}
\sqrt{-\lambda_\beta}
=
\Phi(w_\beta)
=
\frac{1}{2} w_\beta
=
\frac{U}{2}\beta  - \frac{U_1}{2} \beta^2
+ \calO(\beta^3),
\quad\beta \rightarrow 0.
\end{align*}
Similarly, if $ d = 2 $, then $\funcU(\beta)=U \beta  - D\beta^2$, where $D=U_1-(2\pi)^{-1} (\log(2) - \gamma)U^2$
(see \eqref{def:a_eps}), and again using \eqref{proof:s2:expansion_w} and \eqref{def:Phi} we find
\begin{align*}
    \log(-\lambda_\beta)& =\log(\Phi(w_\beta)^2)
    =-\frac{4\pi}{w_\beta} 
    =
    -\frac{4\pi}{U \beta  - D \beta^2 + \calO(\beta^3)} 
    \\
    &  =
    -\frac{4\pi}{U \beta } \frac{1}{1 - (D/U)\beta + \calO(\beta^2)} 
    =
    -\frac{4\pi}{U \beta } \left[ 1 +  \frac{D}{U} \beta  + \calO(\beta^2) \right] 
    \\
    &=
    -\frac{4\pi}{U \beta } -   4 \pi \frac{D}{U^2} + \calO(\beta)  
    =
    -\frac{4\pi}{U \beta } - 4\pi \frac{U_1}{U^2} + \log(4) - 2 \gamma + \calO(\beta)
\end{align*}
as $\beta\rightarrow 0$. The proof of Theorem~\ref{theorem:s=2}(i) is complete.

\subsubsection{Proof of \upshape{(ii)}}

By Proposition~\ref{prop:equivalence}(i) it suffices to show that there exists $ \const > 0 $ such that $ {\fbeta{d} \circ \Phi} $ does not
possess any zeros in $ \Omega $ for $ \beta\to 0 $ satisfying \eqref{cond:(ii)}.

The latter assertion is an easy consequence of Proposition~\ref{prop:equivalence}(iii), which shows that any zero $ w \in \Omega $ of
$ \fbeta{d} \circ \Phi $ necessarily satisfies $ w \in \disc{\funcU(\beta)}{C_\calU |\funcU(\beta)|^3} $ and Lemma~\ref{lemma:Omega.inclusion}(ii), which
allows us to choose $ \const > 0 $ such that
for $ \beta\to 0 $ satisfying \eqref{cond:(ii)} there holds $ {\disc{\funcU(\beta)}{C_\calU |{\funcU(\beta)}|^3} \cap \Omega = \emptyset} $.
\qedhere

\subsection{Proof of Corollary \ref{cor:s=2.V.real} for \texorpdfstring{$\mathbf{d=1}$}{d=1}}

In the situation of Corollary $ \ref{cor:s=2.V.real} $ it assumed that the coupling $ \beta $ is complex and $ V $
is  real-valued. The assertions essentially follow from Theorem~\ref{theorem:s=2}.
We use the same notation $\beta \to 0$ as in the proof of Theorem~\ref{theorem:s=2}.

Recall first that for $d=1$ we have $\funcU(\beta) = U \beta - U_1 \beta^2$, where now $U>0$ and $U_1\in\mathbb R$ (see \eqref{def:U_U_1_intro}, \eqref{ujussi}), and hence
\begin{equation} \label{proof:sc:d1:Re_Im}
\begin{aligned}
    &\Re\funcU(\beta) = U \Re\beta - U_1 (\Re\beta)^2 + U_1 (\Im\beta)^2, \\
    &\Im\funcU(\beta) = U \Im\beta - 2 U_1 (\Re\beta) (\Im\beta);
\end{aligned}
\end{equation}
cf.~\eqref{def:funcU_intro}, \eqref{def:a_eps}.
Moreover, it
follows from \eqref{proof:sc:d1:Re_Im} that
\begin{align} \label{proof:sc:d1:ineq_Im}
        |{\Im\funcU(\beta)}|
        = U|{\Im\beta}| \left( 1 + \calO(\Re\beta) \right),
        \quad \beta \rightarrow 0.
\end{align}

\subsubsection{Proof of \upshape{(i)} for \texorpdfstring{$d=1$}{d=1}}
It suffices to show that there exists $R'>0$ such that
\begin{align} \label{proof:sc:d1:assump(i)}
        \Re\beta
            >
        \left( -\frac{U_1}{U} + \constcor|{\Im\beta}| \right) (\Im\beta)^2
\end{align}
implies
\begin{align} \label{proof:sc:d1:cond(i)}
\Re\funcU(\beta)
   >
    \const |{\Im\funcU(\beta)}|^3,
    \quad \beta\to 0,
\end{align}
with the constant $ \const >0 $ in Theorem~\ref{theorem:s=2}(i); then 
Theorem~\ref{theorem:s=2}(i) leads to the assertion.

If $ \Im\beta = 0 $ we see that for every $ \constcor > 0 $ one has
$\Re\beta > 0$ by 
\eqref{proof:sc:d1:assump(i)}, and $ \Im\funcU(\beta) = 0 $ by \eqref{proof:sc:d1:Re_Im} as well as 
\begin{equation} \label{proof:sc:d1:Im.zero}
\Re\funcU(\beta) = \Re\beta (U- U_1 \Re\beta)>0, \quad \beta \to 0,
\end{equation}
by \eqref{proof:sc:d1:Re_Im}. This leads to $ \Re\funcU(\beta) > 0 = R|{\Im\funcU(\beta)}|^3 $ as $\beta \to 0$, which is exactly
\eqref{proof:sc:d1:cond(i)}.

We assume further that $ \Im\beta \neq 0 $. For $\beta \to 0$ we then conclude from \eqref{proof:sc:d1:ineq_Im} that
\begin{align} \label{proof:sc:d1:Re/Im}
    \frac{\Re\funcU(\beta)}{|{\Im\funcU(\beta)}|^3}
        =
    \frac{\Re\funcU(\beta)}{U^3|{\Im\beta}|^3( 1 + \calO(\Re\beta))^3}
    =\frac{\Re\funcU(\beta)}{U^3|{\Im\beta}|^3}\bigl( 1 + \calO(\Re\beta)\bigr).
\end{align}
\par\noindent
\textit{Case 1:} $  U_1 \leq 0  $ \textit{or} $  \Re\beta \leq 0  $.
Note first that \eqref{proof:sc:d1:assump(i)} is equivalent to the inequality
\begin{align*}
    U \Re\beta + U_1 (\Im\beta)^2 > \constcor U|{\Im\beta}|^{3};
\end{align*}
the latter, when combined with \eqref{proof:sc:d1:Re_Im}
directly implies
\begin{align} \label{proof:sc:d1:ineq_1}
    \Re\funcU(\beta) > \constcor U|{\Im\beta}|^{3} - U_1(\Re\beta)^2.
\end{align}
Now on the one hand, if $ U_1 \leq 0 $, then \eqref{proof:sc:d1:Re/Im} and \eqref{proof:sc:d1:ineq_1} together show
\begin{equation*}
\begin{split}
    \frac{\Re\funcU(\beta)}{|{\Im\funcU(\beta)}|^3}
        &>
    \frac{\constcor U|{\Im\beta}|^{3} - U_1(\Re\beta)^2}{U^3|{\Im\beta}|^3} \bigl(1 + \calO(\Re\beta)\bigr)\\
        &\geq
    \frac{\constcor}{U^2} \bigl(1 +  \calO(\Re\beta)\bigr),
    \quad\beta\to 0.
\end{split}
\end{equation*}
Thus we can choose some sufficiently large $\constcor>0$ so that \eqref{proof:sc:d1:cond(i)} holds.

On the other hand, if $ \Re\beta \leq 0 $, then \eqref{proof:sc:d1:assump(i)} gives us
\begin{align*}
    |{\Re\beta}| < \left( \frac{U_1}{U} - \constcor |{\Im\beta}| \right) (\Im\beta)^2,
    \quad\beta\to 0,
\end{align*}
and hence $({\Re\beta})^2=\calO([\Im\beta]^4)$.
Therefore, with \eqref{proof:sc:d1:Re/Im} and \eqref{proof:sc:d1:ineq_1} we obtain
\begin{equation*}
\begin{split}
    \frac{\Re\funcU(\beta)}{|{\Im\funcU(\beta)}|^3}
        &>
    \frac{\constcor U|{\Im\beta}|^{3} - U_1(\Re\beta)^2}{U^3|{\Im\beta}|^3} \bigl(1 + \calO(\Re\beta)\bigr)\\
    &= \frac{\constcor U|{\Im\beta}|^{3} + \calO([\Im\beta]^4)}{U^3|{\Im\beta}|^3} \bigl(1 + \calO(\Re\beta)\bigr)\\
    &=
    \frac{\constcor}{U^2} \bigl(1 +  \calO(\Re\beta)\bigr) + \calO(\Im\beta)\bigl(1 + \calO(\Re\beta)\bigr)\\
    &=
    \frac{\constcor}{U^2} \bigl(1 +  \calO(|\beta|)\bigr),
    \quad\beta\to 0,
\end{split}
\end{equation*}
that is, \eqref{proof:sc:d1:cond(i)} holds if $\constcor>0$ is chosen sufficiently large in \eqref{proof:sc:d1:assump(i)}.
\\ \\
\noindent
\textit{Case 2: $ U_1 > 0 $ and $ \Re\beta > 0 $}.
Using $ U \Re\beta>U_1 (\Re\beta)^2$ for $\beta\to 0$
it follows from \eqref{proof:sc:d1:Re_Im} that
\begin{align*}
    \Re\funcU(\beta)
        >
    U_1(\Im\beta)^2,
    \quad\beta\to 0,
\end{align*}
so together with \eqref{proof:sc:d1:Re/Im}
\begin{align*}
    \frac{\Re\funcU(\beta)}{|{\Im\funcU(\beta)}|^3}
        >
    \frac{U_1}{U^3 |{\Im\beta}|} \bigl(1 + \calO(\Re\beta)\bigr)
        \rightarrow \infty,
    \quad
    \beta\rightarrow 0,
\end{align*}
which yields \eqref{proof:sc:d1:cond(i)} for any choice of $\constcor >0$ in \eqref{proof:sc:d1:assump(i)}.

\subsubsection{Proof of \upshape{(ii)} for \texorpdfstring{$d=1$}{d=1}}
By the same reasoning as in the proof of (i), it suffices to prove that there exists $\constcor>0$ such that 
\begin{align} \label{proof:sc:d1:assump(ii)}
    \Re\beta
        <
    \left( -\frac{U_1}{U} - \constcor|{\Im\beta}| \right) (\Im\beta)^2
\end{align}
implies
\begin{align} \label{proof:sc:d1:cond(ii)}
\Re\funcU(\beta)
        <
    -R |{\Im\funcU(\beta)}|^3,
    \quad\beta\to 0,
\end{align}
with the constant $ \const >0$ in Theorem~\ref{theorem:s=2}(ii).

If $ \Im\beta = 0 $ one clearly has for any $ \constcor > 0 $ that $\Re\beta < 0$ by 
\eqref{proof:sc:d1:assump(ii)},
$ \Im\funcU(\beta) = 0 $ by \eqref{proof:sc:d1:Re_Im}, and moreover
\begin{equation*}
\Re\funcU(\beta) = \Re\beta (U- U_1 \Re\beta)<0, \quad \beta \to 0;
\end{equation*}
cf.~\eqref{proof:sc:d1:Im.zero}. Hence, \eqref{proof:sc:d1:cond(ii)} is satisfied.

We assume further that $ \Im\beta \neq 0 $. For $\beta \to 0$ we again have \eqref{proof:sc:d1:Re/Im},
that is,
\begin{align} \label{proof:sc:d1:Re/Imjussi}
    \frac{\Re\funcU(\beta)}{|{\Im\funcU(\beta)}|^3}
    =\frac{\Re\funcU(\beta)}{U^3|{\Im\beta}|^3}\bigl( 1 + \calO(\Re\beta)\bigr).
\end{align}
\\
\noindent
\textit{Case 1:} $ U_1 \geq 0 $ \textit{or} $ \Re\beta \geq 0 $.
Observe that \eqref{proof:sc:d1:assump(ii)} can be rewritten in the form
\begin{align*}
    U \Re\beta + U_1 (\Im\beta)^2 < -\constcor U|{\Im\beta}|^{3},
\end{align*}
and combined with \eqref{proof:sc:d1:Re_Im} we find
\begin{align} \label{proof:sc:d1:ineq_4}
    \Re\funcU(\beta)
        <
    -\constcor U|{\Im\beta}|^{3} - U_1(\Re\beta)^2;
\end{align}
cf.~\eqref{proof:sc:d1:ineq_1}.
If $ U_1 \geq 0 $, then \eqref{proof:sc:d1:Re/Imjussi} and \eqref{proof:sc:d1:ineq_4}
yield
\begin{equation*}
\begin{split}
\frac{\Re\funcU(\beta)}{|{\Im\funcU(\beta)}|^3}
        &<
    \frac{-\constcor U|{\Im\beta}|^{3} - U_1(\Re\beta)^2}{U^3|{\Im\beta}|^3} \bigl( 1 + \calO(\Re\beta)\bigr)\\
        &\leq
    -\frac{\constcor}{U^2} \bigl( 1 + \calO(\Re\beta)\bigr) ,
    \quad\beta\to 0,
    \end{split}
\end{equation*}
and \eqref{proof:sc:d1:cond(ii)} follows if $ \constcor > 0 $ is chosen sufficiently large.
If $ \Re\beta \geq 0 $, then \eqref{proof:sc:d1:assump(ii)} gives us
\begin{align*}
    |{\Re\beta}|
        <
    \abs{-\frac{U_1}{U} - \constcor |{\Im\beta}|} (\Im\beta)^2,
    \quad\beta \rightarrow 0,
\end{align*}
and hence $(\Re\beta)^2=\calO([\Im\beta]^4)$. Using
\eqref{proof:sc:d1:Re/Imjussi} and \eqref{proof:sc:d1:ineq_4} we conclude
\begin{equation*}
    \begin{split}
    \frac{\Re\funcU(\beta)}{|{\Im\funcU(\beta)}|^3}
    &<
    \frac{-\constcor U|{\Im\beta}|^{3} - U_1(\Re\beta)^2}{U^3|{\Im\beta}|^3} \bigl( 1 + \calO(\Re\beta)\bigr)\\
        &=
    \frac{-\constcor U|{\Im\beta}|^{3} + \calO([\Im\beta]^4)}{U^3|{\Im\beta}|^3}\bigl( 1 + \calO(\Re\beta)\bigr)\\
        &=
    -\frac{\constcor}{U^2} \bigl( 1 + \calO(\Re\beta)\bigr)+ \calO(\Im\beta)\bigl( 1 + \calO(\Re\beta)\bigr)\\
    &=
    -\frac{\constcor}{U^2} \bigl( 1 + \calO(|\beta|)\bigr),
    \quad\beta\to 0,
\end{split}
    \end{equation*}
    that is, \eqref{proof:sc:d1:cond(ii)} holds if $\constcor>0$ is chosen sufficiently large in \eqref{proof:sc:d1:assump(ii)}.
\\ \\
\textit{Case 2:} $ U_1 < 0 $ and $ \Re\beta < 0 $.
Using $U \Re\beta<U_1 (\Re\beta)^2$ for $\beta\to 0$
it follows from \eqref{proof:sc:d1:Re_Im} that
\begin{align*}
    \Re\funcU(\beta)
        <
    -|{U_1}|(\Im\beta)^2,
    \quad\beta \to 0,
\end{align*}
and with \eqref{proof:sc:d1:Re/Imjussi} we conclude
\begin{align*}
    \frac{\Re\funcU(\beta)}{|{\Im\funcU(\beta)}|^3}
        <
    -\frac{|{U_1}|}{U^3|{\Im\beta}|}\bigl( 1 + \calO(\Re\beta)\bigr)\rightarrow -\infty,
    \quad\beta\to 0,
\end{align*}
which yields \eqref{proof:sc:d1:cond(ii)} for any choice of $\constcor >0$ in \eqref{proof:sc:d1:assump(ii)}.

\subsection{Proof of Corollary \ref{cor:s=2.V.real} for \texorpdfstring{$\mathbf{d=2}$}{d=2}}

In this case we have $\funcU(\beta) = U \beta - D \beta^2$, where $ D = [U_1 - (2\pi)^{-1}( \log(2) - \gamma) U^2] $ and
now $U>0$ and $U_1\in\mathbb R$ (see \eqref{def:U_U_1_intro}, \eqref{ujussi}). Hence
\begin{equation} \label{proof:sc:d2:Re_Im}
\begin{aligned}
    &\Re\funcU(\beta) = U \Re\beta - D (\Re\beta)^2 + D (\Im\beta)^2, \\
    &\Im\funcU(\beta) = U \Im\beta - 2 D (\Re\beta) (\Im\beta);
\end{aligned}
\end{equation}
cf.~\eqref{def:funcU_intro}, \eqref{def:a_eps}.
Hence, for $ \beta\to 0 $ one has
\begin{equation} \label{proof:sc:d2:ineq_Re_Im}
\begin{aligned}
    \Re\funcU(\beta) &= U \Re\beta (1 + \calO(\Re\beta)) + \calO([\Im\beta]^2),  \\
    |{\Im\funcU(\beta)}|^{\frac12} &= U^{\frac12}|{\Im\beta}|^{\frac12} \bigl(1 + \calO(\Re\beta)\bigr).
\end{aligned}
\end{equation}
It is also helpful to notice that whenever $ \Im\beta\neq 0 $ as $ \beta\to 0 $, then 
\eqref{proof:sc:d2:ineq_Re_Im} and the expansion \eqref{taylor} for $ b = -1 $ imply
\begin{equation} \label{proof:sc:d2:expansion}
\begin{aligned}
    \frac{\Re\funcU(\beta)}{2|{\Im\funcU(\beta)}|^{\frac12}}
        &=
    \frac{U \Re\beta (1 + \calO(\Re\beta)) + \calO([\Im\beta]^2)}{2 U^{\frac12}|{\Im\beta}|^{\frac12} \bigl(1 + \calO(\Re\beta)\bigr)} \\
        &=
    \frac{U^{\frac12} \Re\beta}{2|{\Im\beta}|^{\frac12}} \big( 1 + \calO(\Re\beta) \big) + \calO(|{\Im\beta}|^\frac32),
    \quad\beta\to 0.
\end{aligned}
\end{equation}

\subsubsection{Proof of \upshape{(i)} for \texorpdfstring{$d=2$}{d=2}}
It suffices to prove that there exists $R'>0$ such that
\begin{align} \label{proof:sc:d2:assumption(i)}
    \Re \beta
        >
    \frac{2}{U^\frac12} |{\Im\beta}|^{\frac12} \big( 1 + \constcor |{\Re\beta}| \big)
\end{align}
implies
\begin{align} \label{proof:sc:d2:cond(i)}
    \Re\funcU(\beta) > 2|{\Im\funcU(\beta)}|^{\frac12} \big( 1 + \const |{\Re\funcU(\beta)}|\big),
    \quad\beta\to 0,
\end{align}
where $ \const >0 $ is the constant in Theorem~\ref{theorem:s=2}(i).

If $ \Im\beta = 0 $, then for any $ \constcor > 0 $ one has $\Re \beta>0$ by \eqref{proof:sc:d2:assumption(i)},
$\Im\funcU(\beta)=0$ and
\begin{align} \label{proof:sc:d2:Imnonzero}
	\Re\funcU(\beta) = \Re\beta \left( U - D\Re\beta \right) > 0,
	\quad
	\beta\to 0,
\end{align}
by \eqref{proof:sc:d2:Re_Im}, i.e.,~\eqref{proof:sc:d2:cond(i)} holds.

We assume further that $ \Im\beta \neq 0 $. From \eqref{proof:sc:d2:assumption(i)} we find
\begin{equation*}
 |{\Im\beta}|^{\frac12}<\frac{U^\frac12 \Re\beta}{2( 1 + \constcor |{\Re\beta}|)}<U^\frac12 \Re\beta,
\end{equation*}
and hence
$ |{\Im\beta}|^2 = \calO([\Re\beta]^4) $ and $ |{\Im\beta}|^\frac32 = \calO([\Re\beta]^3) $ as $ \beta \to 0 $. With
\eqref{proof:sc:d2:ineq_Re_Im} it follows that
\begin{align} \label{proof:sc:d2:ineq_Re_1}
    \Re\funcU(\beta) = U \Re\beta \bigl(1 + \calO(\Re\beta)\bigr),
    \quad\beta\to 0.
\end{align}
In particular, with \eqref{proof:sc:d2:expansion} and \eqref{proof:sc:d2:assumption(i)}
\begin{align*}
    \frac{\Re\funcU(\beta)}{2|{\Im\funcU(\beta)}|^{\frac12}}
        &=
    \frac{U^{\frac12} \Re\beta}{2|{\Im\beta}|^{\frac12}} \big( 1 + \calO(\Re\beta) \big) + \calO(|{\Im\beta|}^\frac32) \\
        &>
    \big( 1 + \constcor|{\Re\beta}| \big) \big( 1+\calO(\Re\beta) \big) + \calO([\Re\beta]^3) \\
        &=
    1 + |{\Re\beta}|(\constcor + \calO(1)) \\
        &=
    1 + |{\Re\funcU(\beta)}|\left(\frac{\constcor}{U} + \calO(1)\right),
    \quad\beta\to 0,
\end{align*}
which for some sufficiently large $ \constcor > 0 $ yields \eqref{proof:sc:d2:cond(i)}.

\subsubsection{Proof of \upshape{(ii)} for \texorpdfstring{$d=2$}{d=2}}
We prove that
\begin{align} \label{proof:sc:d2:assumption(ii)}
    \Re\beta
        <
    \frac{2}{U^\frac12} |{\Im\beta}|^{\frac12} \big( 1 - \constcor |{\Re\beta}| \big)
\end{align}
for some sufficiently large $ \constcor > 0 $ implies
\begin{align} \label{proof:sc:d2:cond(ii)}
    \Re\funcU(\beta) < 2|{\Im\funcU(\beta)}|^{\frac12} \big( 1 - \const |{\Re\funcU(\beta)}|\big),
    \quad\beta\to 0,
\end{align}
where $ \const > 0 $ is as in Theorem~\ref{theorem:s=2}(ii).

If $ \Im\beta = 0 $, then for any $ \constcor > 0 $ we see that $\Re\beta<0$ by \eqref{proof:sc:d2:assumption(ii)},
$ \Im\funcU(\beta) = 0 $ by \eqref{proof:sc:d2:Re_Im}, and $ \Re\funcU(\beta) < 0 $ for $\beta\rightarrow 0$;
cf.~\eqref{proof:sc:d2:Imnonzero}. From this we conclude \eqref{proof:sc:d2:cond(ii)}.

From here on we assume $ \Im\beta \neq 0 $, and hence are allowed to use the expansion \eqref{proof:sc:d2:expansion}. 
If $|{\Im\beta}| \geq |{\Re\beta}| $ as $\beta \to 0$, then $ \Re\beta = \calO(\Im\beta) $ so
\eqref{proof:sc:d2:expansion} yields
\begin{equation*}
\begin{split}
  \frac{\Re\funcU(\beta)}{2|{\Im\funcU(\beta)}|^\frac12}
    = 
  \frac{U^\frac12 \Re \beta}{2|{\Im \beta}|^\frac12}(1+ \calO(\Re \beta)) + \calO(|{\Im \beta}|^\frac 32) 
  = \calO(|{\Im \beta}|^\frac 12),  \quad\beta\to 0,
\end{split}
\end{equation*}
i.e.,~\eqref{proof:sc:d2:cond(ii)} holds for any choice of $\constcor>0$ in \eqref{proof:sc:d2:assumption(ii)}.

Thus we further assume that $\Im \beta \neq 0$ and
\begin{align} \label{proof:sc:d2:Im<Re}
|{\Im\beta}| < |{\Re\beta}|,
\quad\beta \to 0.
\end{align}
By \eqref{proof:sc:d2:Im<Re} we have $|{\Im\beta}|=\calO(\Re\beta)$ and from \eqref{proof:sc:d2:ineq_Re_Im} we then obtain
\begin{align*} 
    \Re\funcU(\beta) = U \Re\beta (1 + \calO(\Re\beta)),
    \quad\beta\to 0,
\end{align*}
which together with \eqref{proof:sc:d2:expansion}, \eqref{proof:sc:d2:assumption(ii)} and \eqref{proof:sc:d2:Im<Re}
implies 
\begin{align*}
    \frac{\Re\funcU(\beta)}{2|{\Im\funcU(\beta)}|^{\frac12}}
        &=
    \frac{U^{\frac12} \Re\beta}{2|{\Im\beta}|^{\frac12}} \big( 1 + \calO(\Re\beta) \big) + \calO(|{\Im\beta|}^\frac32) \\
        &<
    \big( 1 - \constcor|{\Re\beta}| \big) \big( 1+\calO(\Re\beta) \big) + \calO(|{\Re\beta|}^\frac32) \\
        &=
    1 - |{\Re\beta}|(\constcor + \calO(1)) \\
        &=
    1 - |{\Re\funcU(\beta)}|\left( \frac{\constcor}{U} + \calO(1) \right) ,
    \quad\beta\to 0.
\end{align*}
For some sufficiently large $ \constcor > 0 $ this implies \eqref{proof:sc:d2:cond(ii)}.

\subsection{Proof of Corollary \ref{cor:s=2.V.complex.beta.real} for \texorpdfstring{$\mathbf{d=1}$}{d=1}}

In Corollary $ \ref{cor:s=2.V.complex.beta.real} $ it is assumed that the coupling constant $ \beta $ is positive and $ V $
is complex-valued. The assertions are consequences of Theorem~\ref{theorem:s=2}.

For $d=1$ we have $\funcU(\beta) = U \beta - U_1 \beta^2$, where now $\beta>0$
(see \eqref{def:funcU_intro}, \eqref{def:a_eps}), and hence 
\begin{equation} \label{pf:cor:Vcomplex:d1:ReIm}
\begin{aligned}
    \Re\funcU(\beta) &= \beta\Re U - \beta^2 \Re U_1, \\
    |{\Im\funcU(\beta)}| &= |{\beta\Im U - \beta^2 \Im U_1}| = \calO(\beta), \quad {\beta \rightarrow 0+}.
\end{aligned}
\end{equation}

\subsubsection{Proof of \upshape{(i)} for \texorpdfstring{$d=1$}{d=1}}
It follows from \eqref{pf:cor:Vcomplex:d1:ReIm} that for $\beta \rightarrow 0+$ we have $|{\Im\funcU(\beta)}|^3= \calO(\beta^3)$ and 
\begin{align*}
    \Re\funcU(\beta)
  =
    \begin{cases}
    \beta \Re U (1+\calO(\beta)) & \text{ if } \Re U > 0, \\
     \beta^2 |{\Re U_1}| & \text{ if } \Re U = 0 \text{ and } \Re U_1 < 0.
    \end{cases}
\end{align*}
Thus for $R>0$ from Theorem~\ref{theorem:s=2}(i) (in fact, for any $R>0$)
\begin{align} \label{pf:cor:Vcomplex:d1:cond(i)}
	\Re\funcU(\beta) > \const |{\Im\funcU(\beta)}|^3, \quad \beta\rightarrow 0+,
\end{align}
so the claim follows from Theorem~\ref{theorem:s=2}(i).

\subsubsection{Proof of \upshape{(ii)} for \texorpdfstring{$d=1$}{d=1}}
Similar to the proof of (i) it follows from \eqref{pf:cor:Vcomplex:d1:ReIm} that for $\beta \rightarrow 0+$
we have $|{\Im\funcU(\beta)}|^3= \calO(\beta^3)$ and
\begin{align*}
    \Re\funcU(\beta)
    =
    \begin{cases}
   	- \beta |{\Re U}| (1+\calO(\beta)) & \text{ if } \Re U < 0, \\
   - \beta^2 \Re U_1 & \text{ if } \Re U = 0 \text{ and } \Re U_1 > 0.
\end{cases}
\end{align*}
Hence, for $R>0$ from Theorem~\ref{theorem:s=2}(ii) we obtain
\begin{align*}
    \Re\funcU(\beta) < -\const |{\Im\funcU(\beta)}|^3, \quad \beta\rightarrow 0+,
\end{align*}
and the claim follows from Theorem~\ref{theorem:s=2}(ii).

\subsection{Proof of Corollary \ref{cor:s=2.V.complex.beta.real} for \texorpdfstring{$\mathbf{d=2}$}{d=2}}
Now we have $\funcU(\beta) = U \beta - D \beta^2$, where $ D = [U_1 - (2\pi)^{-1}( \log(2) - \gamma) U^2] $ and
$\beta>0$ (see \eqref{def:funcU_intro}, \eqref{def:a_eps}), and hence
\begin{equation} \label{pf:cor:Vcomplex:d2:ReIm}
\begin{aligned}
    \Re\funcU(\beta) &= \beta \Re U - \beta^2 \Re D = \beta \Re U + \calO(\beta^2) , \\
    |{\Im\funcU(\beta)}|^\frac12 &= |{\beta \Im U - \beta^2 \Im D}|^\frac12 = \beta^\frac12 |{\Im U - \beta \Im D}|^\frac12.
\end{aligned}
\end{equation}
For later purposes, we also note that
\begin{align}  \label{pf:cor:Vcomplex:d2:ImD}
    \Im D
        =
    \Im U_1 - \frac{1}{\pi}(\log (2) - \gamma)(\Re U)(\Im U),
\end{align}
and, in particular, $ \Im D = \Im U_1 $ if $ \Im U = 0 $.

\subsubsection{Proof of \upshape{(i)} for \texorpdfstring{$d=2$}{d=2}}
Using the assumption $ \Im U = 0 $ in \eqref{pf:cor:Vcomplex:d2:ReIm}
we obtain
\begin{equation} \label{pf:cor:Vcomplex:d2:eq1}
    %\Re\funcU(\beta) &= \beta\Re U + \calO(\beta^2) = U \beta + \calO(\beta^2), \quad\beta\rightarrow 0,\\
    |{\Im\funcU(\beta)}|^\frac12 = \beta^\frac12 |{\beta\Im D}|^\frac12 = \beta|{\Im U_1}|^\frac12,
\end{equation}
where also \eqref{pf:cor:Vcomplex:d2:ImD} was used in the last equality.

If $ \Im U_1 \neq 0 $, then \eqref{pf:cor:Vcomplex:d2:ReIm} and \eqref{pf:cor:Vcomplex:d2:eq1} imply
\begin{equation}\label{hurra77}
    \frac{\Re\funcU(\beta)}{2|{\Im\funcU(\beta)}|^\frac12}
        =
    \frac{\Re U + \calO(\beta)}{2|{\Im U_1}|^\frac12}
        \rightarrow
    \frac{\Re U}{2|{\Im U_1}|^\frac12}
        > 1,
    \quad \beta \rightarrow 0+,
\end{equation}
where the assumption $\Re U > 2|{\Im U_1}|^\frac12 $ was used in the last inequality.
With $R>0$ from Theorem~\ref{theorem:s=2}(i) we conclude from \eqref{hurra77} that
\begin{align}  \label{pf:cor:Vcomplex:d2:cond(i)}
    \Re\funcU(\beta) > 2|{\Im\funcU(\beta)}|^\frac12 \big( 1 + \const |{\Re\funcU(\beta)}| \big),
    \quad {\beta\to 0+},
\end{align}
and hence the claim follows from Theorem~\ref{theorem:s=2}(i).
If $ \Im U_1 = 0 $, then $ \Im\funcU(\beta) = 0 $ by \eqref{pf:cor:Vcomplex:d2:eq1} and since $\Re U>2|{\Im U_1}|^\frac12=0$, we obtain
with \eqref{pf:cor:Vcomplex:d2:ReIm}
\begin{align}\label{super}
	\Re\funcU(\beta) = \beta \Re U + \calO(\beta^2) > 0, \quad {\beta\to 0+}.
\end{align}
Hence, \eqref{pf:cor:Vcomplex:d2:cond(i)} holds and again
the claim follows from Theorem~\ref{theorem:s=2}(i).

\subsubsection{Proof of \upshape{(ii)} for \texorpdfstring{$d=2$}{d=2}}
Assume first that $\Im U \neq 0$. Then we conclude from \eqref{pf:cor:Vcomplex:d2:ReIm} that
\begin{align*}
    \frac{\Re\funcU(\beta)}{2|{\Im\funcU(\beta)}|^\frac12}
    =
    \frac{\beta\Re U + \calO(\beta^2)}{2\beta^\frac12|{\Im U - \beta \Im D}|^\frac12}
    =
    \calO(\beta^\frac12 )\rightarrow 0,
    \quad\beta\to 0+.
\end{align*}
Therefore, if $R>0$ is as in Theorem~\ref{theorem:s=2}(ii), we find
\begin{align} \label{pf:cor:Vcomplex:d2:cond(ii)}
    \Re\funcU(\beta) < 2|{\Im\funcU(\beta)}|^\frac12 \big( 1 - \const |{\Re\funcU(\beta)}| \big),
    \quad {\beta\to 0+},
\end{align}
so the claim follows from Theorem~\ref{theorem:s=2}(ii).

It remains to treat the case $\Re U < 2|{\Im U_1}|^\frac12 $.
By the
first part of this proof we can assume that $ \Im U = 0 $, so that again \eqref{pf:cor:Vcomplex:d2:eq1} holds.
If $ \Im U_1 \neq 0 $, then \eqref{pf:cor:Vcomplex:d2:ReIm} and \eqref{pf:cor:Vcomplex:d2:eq1} imply
\begin{align*}
    \frac{\Re\funcU(\beta)}{2|{\Im\funcU(\beta)}|^\frac12}
        =
    \frac{\Re U + \calO(\beta)}{2|{\Im U_1}|^\frac12}
        \rightarrow
    \frac{\Re U}{2|{\Im U_1}|^\frac12}
        < 1,
    \quad \beta \to 0+,
\end{align*}
and this leads to \eqref{pf:cor:Vcomplex:d2:cond(ii)}, and hence
the claim follows from Theorem~\ref{theorem:s=2}(ii).
If $ \Im U_1 = 0 $, then \eqref{pf:cor:Vcomplex:d2:eq1} implies $ \Im\funcU(\beta) = 0 $ and
from $\Re U < 2|{\Im U_1}|^\frac12=0$ we have $ \Re\funcU(\beta) < 0 $ as $ \beta\to 0+ $ by \eqref{pf:cor:Vcomplex:d2:ReIm};
cf.~\eqref{super}. 
This again implies \eqref{pf:cor:Vcomplex:d2:cond(ii)} and together with Theorem~\ref{theorem:s=2}(ii)
the claim follows.

\subsection{Details on Example~\ref{example:s=2.V.real.theta}} \label{subsec.example.proofs.1}

Here it is assumed that $ V $ is a non-negative potential, $ V \not\equiv 0 $, that satisfies \eqref{assumption:family_v}
and \eqref{def:Rollnik_space_mainjussi}, and the coupling is of the form $ \beta(\epsilon) = e^{i\theta} \epsilon $ with
$ \theta \in (-\pi, \pi] $ and $\epsilon>0$. Then one has
\begin{align} \label{ReImExample}
    \Re\beta(\epsilon) = \cos(\theta)\epsilon
    \quad\text{and}\quad
    \Im\beta(\epsilon) = \sin(\theta)\epsilon,
\end{align}
and $ U = \| V \|_{L^1} > 0 $. Hence, we may fix $ \constcor > 0 $
as in Corollary \ref{cor:s=2.V.real} and check the conditions \eqref{cond:(i).cor.V.real} and
\eqref{cond:(ii).cor.V.real} to prove \eqref{theta}.

For $ d = 1 $ it is clear from \eqref{ReImExample} that \eqref{cond:(i).cor.V.real} becomes
\begin{equation} \label{cos(theta)}
 \cos(\theta)>\left(-\frac{U_1}{\| V \|_{L^1}}+ R'|{\sin(\theta)}|\epsilon \right)\sin^2(\theta)\epsilon
\end{equation}
and \eqref{cond:(ii).cor.V.real} becomes
\begin{equation*}
 \cos(\theta)<\left(-\frac{U_1}{\| V \|_{L^1}}- R'|{\sin(\theta)}|\epsilon \right)\sin^2(\theta)\epsilon.
\end{equation*}
The non-negativity of $ V $ yields $ U_1 > 0 $, and hence $-U_1/\| V \|_{L^1}<0$. Therefore, for $ \epsilon\to 0+ $ 
we see that
$ \beta(\epsilon) $ satisfies \eqref{cond:(i).cor.V.real} if
$ \theta \in \left[-\pi/2, \pi/2\right] $ and $ \eqref{cond:(ii).cor.V.real} $ otherwise, i.e.,~\eqref{theta} 
in the case $d=1$ holds.
If $ d = 2 $, then it follows that \eqref{cond:(i).cor.V.real} is satisfied if $ \theta = 0 $, whereas $ \theta\neq 0 $
implies \eqref{cond:(ii).cor.V.real}, i.e.,~\eqref{theta} in the case $d=2$ holds.

It remains to prove that the spectral enclosure \eqref{ineq:Davies} is sharp
in the weak coupling regime.
In fact, for $ V $ and $ \beta(\epsilon) $ as above \eqref{ineq:Davies} on the one hand
implies
\begin{align} \label{spectral_inclusion_strict}
    \lambda \in \discBig{0}{\frac{\epsilon^2}{4} \| V \|_{L^1}^2},
\end{align}
and on the other hand, by squaring both sides of the expansion \eqref{expansion:example}, we see that the weakly coupled eigenvalue
$ \lambda_{\beta(\epsilon)} $ of $ H_{\beta(\epsilon)} $ satisfies
\begin{equation}
\begin{aligned} \label{expansion:example.squared}
    \lambda_{\beta(\epsilon)}
        &=
    -\frac{e^{2i\theta} \epsilon^2}{4} \| V \|_{L^1}^2 + \calO(\epsilon^3) ,
    \quad{\epsilon\to 0+}.
\end{aligned}
\end{equation}
By varying $ \theta \in [-\pi/2, \pi/2] $ it is now clear that for every point $ \lambda $ on the boundary of the disc \eqref{spectral_inclusion_strict} there exists
a Schrödinger operator, namely $ H_{\beta(\epsilon)} $, that has a weakly coupled eigenvalue $ \lambda_{\beta(\epsilon)} $ such that
\begin{align*}
    |\lambda-\lambda_{\beta(\epsilon)}| = \calO(\epsilon^3),
    \quad{\epsilon\to 0+},
\end{align*}
i.e., the sharpness of the enclosure
\eqref{spectral_inclusion_strict} in the weak coupling regime follows.

\subsection{Details on Example~\ref{example_d=2}}
\label{subsec.example.proofs.2}

Let $V$ be some potential that satisfies \eqref{assumption:family_v} and \eqref{def:Rollnik_space_mainjussi},
and, in addition, assume that
\begin{equation}\label{uvhier}
    \Im U = \int_{\R^2} \Im V(x) \dx = 0
    \quad\text{ and }\quad
    \Re U = \int_{\R^2} \Re V(x) \dx > 0.
\end{equation}
In this example for $ d = 2 $ we consider the family
\begin{align*}
    V(\alpha) := \alpha\Re V + i\Im V,\quad \alpha \in \R,
\end{align*}
and the corresponding Schrödinger operators $ H_\beta(\alpha)=-\Delta-\beta V(\alpha) $ in \eqref{H.beta.def}
for $ \beta\to 0+ $. Note that each $V(\alpha)$, $\alpha\in\R$, also satisfies the conditions \eqref{assumption:family_v} and \eqref{def:Rollnik_space_mainjussi}.

We claim that
\begin{align} \label{assertion_V_alpha_1}
    V(\alpha) \text{ satisfies }
    \begin{cases}
        \eqref{cond:(i).cor.V.complex} & \text{ if } \alpha > \alpha^*, \\
        \eqref{cond:(ii).cor.V.complex} & \text{ if } \alpha < \alpha^*, \: \alpha \neq 0,
    \end{cases}
\end{align}
where
\begin{align} \label{alpha*}
    \alpha^* := \frac{4}{\pi (\Re U)^2} \abs{\int_{\R^4} (\Re V(x)) \log|x-y| (\Im V(y)) \dmath(x,y)}.
\end{align}
In fact, to verify this define for $\alpha\in\R$
\begin{align*}
    U(\alpha) &:= \int_{\R^2} V(\alpha)(x) \dx, \\
    \quad
    U_1(\alpha) &:= \frac{1}{2\pi} \int_{\R^4} V(\alpha)(x) \log|x-y| V(\alpha)(y) \dmath(x,y),
\end{align*}
and notice that $ U(\alpha) $ and $ U_1(\alpha) $ are as in \eqref{def:U_U_1_intro}. Using \eqref{uvhier} it
is clear that
\begin{equation*}
 U(\alpha)=\int_{\R^2} \alpha (\Re V(x)) + i(\Im V(x)) \dx =
    \alpha \Re U,
\end{equation*}
and hence $ \Im U(\alpha) = 0 $ for $ \alpha \in \R $ and $ \Re U(\alpha) \neq 0 $ for $ \alpha \neq 0 $. Next, set
\begin{align} \label{f(alpha)}
    f(\alpha) := \Re U(\alpha) - 2|{\Im U_1(\alpha)}|^\frac12
        = \alpha\Re U - 2|{\Im U_1(\alpha)}|^\frac12, \quad \alpha\in \R.
\end{align}
Observe that $ f(\alpha) > 0 $ implies \eqref{cond:(i).cor.V.complex} and that
$ f(\alpha) < 0 $ implies \eqref{cond:(ii).cor.V.complex} for the potential $ V(\alpha) $.
A short computation yields
\begin{align*}
    \Im U_1(\alpha)
        &=
    \frac{1}{\pi} \int_{\R^4} (\Re V(\alpha)(x)) \log|x-y| (\Im V(\alpha)(y)) \dmath(x,y)  \\
        &=
    \frac{\alpha}{\pi} \int_{\R^4} (\Re V(x)) \log|x-y| (\Im V(y)) \dmath(x,y),
\end{align*}
and hence
\begin{align*}
    f(\alpha)
    =
    \alpha\Re U
    -
    2 \frac{|\alpha|^\frac12}{\sqrt{\pi}}  \abs{\int_{\R^4} (\Re V(x)) \log|x-y| (\Im V(y)) \dmath(x,y)}^\frac12.
\end{align*}
It is now easily seen that the (at most) two zeros of $ f $ are given by $ \alpha = 0 $ and
\eqref{alpha*}
as well as that $ f(\alpha) < 0 $ if $ \alpha < \alpha^* $, $ \alpha \neq 0 $, and $ f(\alpha) > 0 $
if $ \alpha > \alpha^* $. By the definition \eqref{f(alpha)} of $ f $ this implies \eqref{assertion_V_alpha_1}.

\appendix

\section{} \label{section:appenidx}

\subsection{Inequalities related to the Green's function} \label{section:appenidx:inequalities}
Recall that for $ z \in \C_+ $  the integral kernel of the resolvent $ (-\Delta+z^2)^{-1} $
of the free Laplacian in $ L^2(\R^d) $ is given by
\begin{equation} \label{def:resolvent_appendix}
\begin{aligned}
    \ResolventKernel{d}(x,y;z)
        =
    \begin{cases}
        (2z)^{-1} e^{-z\abs[]{x-y}} & \text{ if } d = 1, \\
        (2\pi)^{-1} K_0(z\abs[]{x-y}) & \text{ if } d = 2,
    \end{cases}
    \qquad x,y \in \R^d,
\end{aligned}
\end{equation}
where $ K_0 : \C \setminus (-\infty,0] \rightarrow \C $
denotes the modified Bessel function of the second kind of order zero.

For the convenience of the reader, let us briefly recall some properties of $ K_0 $ (for an extensive treatment, we refer to 
\cite[Chap.~9.6]{AbramowitzStegun}).
First, $ K_0 : \C \setminus (-\infty, 0] \rightarrow \C $
is a holomorphic function, that by \cite[Eq.~(9.6.13)]{AbramowitzStegun} admits the expansion
\begin{align}    \label{powerseries:K0}
    K_0(w) = -\log(w) + \log(2) - \gamma + \calO( w^2 \log(w) ),
    \quad w \to 0, \: w \in \C_+,
\end{align}
where $ \log : \C \setminus (-\infty, 0] \rightarrow \C $ is the principal branch of the complex logarithm, and
$ \gamma $ is the Euler-Masceroni constant. In particular, $ K_0 $ has a logarithmic singularity at $ w = 0 $.
For large $ w $ with non-negative real part, it follows from \cite[Eq.~(9.7.2)]{AbramowitzStegun} that $ K_0 $ has
the asymptotic behavior
\begin{align}    \label{property:K0_decays_exponentially}
    K_0(w) = \left( \frac{\pi}{2w} \right)^{\frac12} e^{-w} \left( 1 + \calO(w^{-1}) \right),
    \quad w \to \infty, \: \Re w \geq 0,
\end{align}
so $ K_0(w) $ tends to zero uniformly if $ w \rightarrow \infty $
with $ \Re w \geq 0 $.

Since we have to deal with the complex logarithm, let us briefly summarize
some properties of its principal branch. First, there holds
\begin{align}    \label{ineq:log(z)+pi}
    \logabs{w} \leq |{\log(w)}| < \logabs{w} + \pi,
    \quad w \in \C \setminus (-\infty, 0],
\end{align}
and as a consequence (using $\pi < C |{\log(1/2)}|< C|{\log |w|}|$ for $|w|<1/2$)
\begin{align}    \label{ineq:log(z)}
    \logabs{w} \leq |{\log(w)}| \leq C\logabs{w},
    \quad w \in \discBig{0}{\frac12} \setminus (-\infty, 0].
\end{align}
Furthermore, it is not difficult to verify the identity
\begin{equation}     \label{identitiy:log(z)}
\begin{aligned}
    \log(w_1 w_2) = \log(w_1) + \log(w_2) \spacer{if}{\quad} |{\arg(w_1) + \arg(w_2)}| < \pi
\end{aligned}
\end{equation}
for all $ w_1, w_2 \in \C \setminus (-\infty, 0] $,
where $ \arg(w) \in (-\pi, \pi) $ denotes the argument of a complex number
$ w \in \C \setminus (-\infty, 0] $.

In the following, recall that for $ z \in \C_+ $ we have
\begin{align} \label{def:funcg}
    \funcg{d}(z)
        =
    \begin{cases}
        (2z)^{-1} & \text{if } d = 1, \\
        -(2\pi)^{-1} \log (z) & \text{if } d = 2,
    \end{cases}
\end{align}
and, in addition, define for $ x, y \in \R^d $ (with $ x \neq y $ if $ d = 2 $),
\begin{align} \label{def:funch}
    \funch{d}(x,y;z)
        =
    \begin{cases}
        (2z)^{-1} \left( 1 - z\abs[]{x-y} \right) & \text{ if } d = 1, \\
        -(2\pi)^{-1} \left( \log(z|x-y|) - \log(2) + \gamma \right) & \text{ if } d = 2.
    \end{cases}
\end{align}

In the next lemma, we collect useful inequalities for certain integral kernels that appear in the analysis
of the Schrödinger operator $ H_\beta $.

\begin{lemma}  \label{lemma:ineq:weber_kernels}
For $ z \in \C_+ $ and $ x, y \in \R^d $, let $ \ResolventKernel{d}(x,y;z) $ be as in \eqref{def:resolvent_appendix}. Let $ \funcg{d}(z) $ and $ \funch{d}(x,y;z) $ be given by \eqref{def:funcg} and \eqref{def:funch}, respectively. Then, the following inequalities hold.
\begin{enumerate}
    \item If $ d = 1 $, then
    \begin{enumerate}[\upshape (a)]
        \item $ |\ResolventKernel{1}(x,y;z) - \funcg{1}(z)| \leq \frac{1}{2}\abs{x-y} $,
        \item $ |\ResolventKernel{1}(x,y;z) - \funch{1}(x,y;z)| \leq \frac{1}{4}\abs[]{z}\abs[]{x-y}^{2} $.
    \end{enumerate}
    \item If $ d = 2 $, then for $ |z| < 1/2 $ there exists a constant
    $ C > 0 $, such that
    \vspace{-0.05cm}
    \begin{enumerate}[\upshape(a)]
        \item $ |\ResolventKernel{2}(x,y;z) - \funcg{2}(z)|^2 \leq C(1 + \logabs{x-y}^2) $,
        \item $ |\ResolventKernel{2}(x,y;z) - \funch{2}(x,y;z)| \leq C(1 + \logabs{x-y}) $,
        \item $ |\ResolventKernel{2}(x,y;z) - \funch{2}(x,y;z)| \leq C |z||{\log(z)}|
        $ if $ |x-y| < |z|^{-\frac12} $.
    \end{enumerate}
\end{enumerate}
\end{lemma}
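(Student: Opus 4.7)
The plan is to reduce each bound to elementary estimates on $e^{-z|x-y|}$ (if $d=1$) or on $K_0(z|x-y|)$ (if $d=2$), using the asymptotic information \eqref{powerseries:K0}--\eqref{property:K0_decays_exponentially} and the logarithm properties \eqref{ineq:log(z)+pi}--\eqref{identitiy:log(z)} from the beginning of the appendix.

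For $d=1$, set $w:=-z|x-y|$, so that $\Re w\leq 0$ since $z\in\C_+$. The integral representations $e^w-1=\int_0^1 w\,e^{tw}\,\dt$ and $e^w-1-w=\int_0^1(1-t)w^2 e^{tw}\,\dt$, combined with the pointwise bound $|e^{tw}|\leq 1$, give $|e^w-1|\leq|w|$ and $|e^w-1-w|\leq|w|^2/2$. Dividing by $2|z|$ and inserting the explicit forms \eqref{def:resolvent_appendix} and \eqref{def:funch} yields (i)(a) and (i)(b).

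For $d=2$, I would split the analysis into a \emph{near} regime $|z||x-y|\leq 1$ and a \emph{far} regime $|z||x-y|>1$. In the near regime, the expansion \eqref{powerseries:K0} together with $\log(z|x-y|)=\log(z)+\log|x-y|$ (a consequence of \eqref{identitiy:log(z)} since $|x-y|>0$ and $z\in\C_+$) yields
\begin{equation*}
    K_0(z|x-y|)+\log(z|x-y|)-\log(2)+\gamma=\calO\bigl((z|x-y|)^2\log(z|x-y|)\bigr),
\end{equation*}
which gives (ii)(b) via $|{\log(z|x-y|)}|\leq C(1+\logabs{x-y}+|{\log(z)}|)$ and $|z||x-y|\leq 1$; (ii)(a) then follows by adding back the $-\log|x-y|$ coming from $\funch{2}-\funcg{2}$ and invoking \eqref{ineq:log(z)}. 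In the far regime $K_0(z|x-y|)$ is uniformly bounded by \eqref{property:K0_decays_exponentially}, while $|x-y|>1/|z|$ forces $\logabs{x-y}\geq\logabs{z}\geq c\,|{\log(z)}|$ for $|z|<1/2$, so the logarithmic terms arising from $\funcg{2}$ and $\funch{2}$ are absorbed into the right-hand sides of (ii)(a) and (ii)(b), respectively.

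The delicate case is (ii)(c), where the right-hand side $C|z|\,|{\log(z)}|$ vanishes with $|z|$. Since $|x-y|<|z|^{-1/2}$ forces $|z||x-y|<|z|^{1/2}<1/\sqrt{2}$, the near-regime expansion above applies and the task reduces to proving
\begin{equation*}
    |z|^2|x-y|^2\bigl(|{\log(z)}|+\logabs{x-y}\bigr)\leq C|z|\,|{\log(z)}|.
\end{equation*}
The first summand is immediate from $|z|^2|x-y|^2\leq|z|$. For the second I would split once more: on $|x-y|\leq 1$ the boundedness of $r\mapsto r^2|\log r|$ on $(0,1]$ reduces the contribution to $\calO(|z|^2)\subset\calO(|z|\,|{\log(z)}|)$, while on $1<|x-y|<|z|^{-1/2}$ the inequalities $\logabs{x-y}<\tfrac12\logabs{z}$ and $|x-y|^2<|z|^{-1}$ deliver the desired bound directly. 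The main technical point is precisely this careful distribution of the $\calO(w^2\log w)$ remainder of $K_0$ between the $|{\log(z)}|$ and $\logabs{x-y}$ contributions.
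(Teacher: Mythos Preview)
Your outline matches the paper's: Taylor remainders for $d=1$, and for $d=2$ a near/far split on $|z|\,|x-y|$ exploiting \eqref{powerseries:K0} and \eqref{property:K0_decays_exponentially}. Two remarks.

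Your near-regime justification of (ii)(b), as written, does not close: bounding $|(z|x-y|)^2\log(z|x-y|)|$ via $|\log(z|x-y|)|\le C(1+\logabs{x-y}+|\log(z)|)$ and then only using $|z|\,|x-y|\le 1$ for the prefactor leaves a stray $|\log(z)|$ on the right, which is unbounded as $z\to 0$. The fix is immediate and you in fact use it later in (ii)(c): on $|w|\le 1$ the quantity $|w|^2|\log w|$ is uniformly bounded, so the remainder is simply $\calO(1)\le C(1+\logabs{x-y})$. The paper incidentally takes the reverse route here, first proving (ii)(a) in both regimes and then deducing (ii)(b) from (ii)(a) by a triangle inequality.

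For (ii)(c) the paper avoids your secondary split $|x-y|\le 1$ versus $1<|x-y|<|z|^{-1/2}$: since $|w|^2\le |z|<1/2$ one may apply \eqref{ineq:log(z)} to $w^2$ to obtain $|w^2\log w|\le C|w|^2\,\bigl|\log|w|^2\bigr|$, after which the monotonicity of $(0,1)\ni t\mapsto t\bigl(|\log t|+1\bigr)$ together with $|w|^2\le|z|$ yields $\le C|z|\,|\log|z||\le C|z|\,|\log(z)|$ directly. Your case-split argument is correct; the paper's monotonicity trick is just one step shorter.
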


\begin{proof}
(i)
For $ d = 1 $, there holds
\begin{align*}
    &\ResolventKernel{1}(x,y;z) - \funcg{1}(z) = \frac{e^{-z|x-y|}-1}{2z}, \\
    &\ResolventKernel{1}(x,y;z) - \funch{1}(x,y;z) = \frac{e^{-z|x-y|}-1+z|x-y|}{2z}.
\end{align*}
In order to see (a), notice that for any $ z \in \C_+ $ one has
\begin{align*}
    \abs[]{\frac{e^{-z|x-y|}-1}{2z}}
        =
    \abs[]{\frac{1}{2z} \int_{0}^{\abs[]{x-y}} \frac{\dmath}{\dt} e^{-zt} \dt}
        =
    \frac{1}{2} \abs[]{\int_{0}^{\abs[]{x-y}} e^{-zt} \dt}
        \leq
    \frac{1}{2}|x-y|,
\end{align*}
i.e., the claimed inequality.
Estimate (b) follows from integration by parts,
\begin{align*}
        \frac{e^{-z\abs[]{x-y}} - 1 + z \abs[]{x-y}}{2z}
    &=
        \frac{1}{2z} \int_{0}^{\abs[]{x-y}} (\abs[]{x-y}-t) \frac{\dmath^2}{\dt^2} e^{-zt} \dt \\
    &=
        \frac{z}{2} \int_{0}^{\abs[]{x-y}} (\abs[]{x-y}-t)  e^{-zt} \dt,
\end{align*}
and hence
\begin{align*}
    \abs[]{\frac{e^{-z\abs[]{x-y}} - 1 + z \abs[]{x-y}}{2z}}
        \leq
    \frac{|z|}{2} \int_{0}^{\abs[]{x-y}} (\abs[]{x-y} - t) \dt
        =\frac{1}{4}
    |z||x-y|^2.
\end{align*}
\noindent
(ii) Consider $ x, y \in \R^2 $ with $ x \neq y $  and set
\begin{align} \label{proof:def:w}
    w := z|x-y|
\end{align}
for $ z \in \C_+ $ with $ |z| < 1/2 $; it follows that $ w \neq 0 $ and $ w \in \C_+ $.
With $ w $ in \eqref{proof:def:w}, the kernels we have to estimate read as
\begin{align}
    &\ResolventKernel{2}(x,y;z) - \funcg{2}(z) = \frac{K_0(w)+\log (z)}{2\pi} \label{proof:weber_2D_1}, \\
    &\ResolventKernel{2}(x,y;z) - \funch{2}(x,y;z) = \frac{K_0(w)+\log(w)-\log(2)+\gamma}{2\pi}. \label{proof:weber_2D_2}
\end{align}

For the inequalities below recall that we use the convention that 
$ C $ denotes a positive constant that may change in
between the estimates.
Let us start by proving the claim (a).
If $ |w| < 1 $, then \eqref{powerseries:K0} implies
\begin{align*}
    | K_0(w) + \log(w) |^2
        \leq
    C \left( 1 + |w|^2 |{\log(w)}| \right)^2
        \leq
    C.
\end{align*}
The last inequality together with (see \eqref{identitiy:log(z)})
\begin{align} \label{proof:logarithmic_identity}
    \log (z) = \log(w) - \log |x-y|
\end{align}
and the elementary inequality $ (|a|+|b|)^2 \leq 2|a|^2 + 2|b|^2 $ yield 
\begin{align*}
    |K_0(w) + \log (z)|^2
        =
    |K_0(w) + \log(w) - \log|x-y||^2
        \leq
    C(1 + \logabs{x-y}^2).
\end{align*}
If $ |w| \geq 1 $, then \eqref{property:K0_decays_exponentially} and \eqref{ineq:log(z)} yield (recall that $|z|<1/2$)
\begin{equation*}
\begin{split}
        \abs[]{K_0(w) + \log (z)}
    \leq
        |K_0(w)| + |{\log (z)}|
    \leq
        C + |{\log (z)}|
        \leq
    C (1+\log{(|z|^{-1})}).
\end{split}
    \end{equation*}
Since $ |z| < 1/2$ and $ 2< |z|^{-1} \leq |x-y| $ if $ |w| \geq 1 $ by \eqref{proof:def:w}, we arrive at
\begin{equation*}
    \abs[]{K_0(w) + \log (z)}^2
    \leq
    C (1+|{\log|x-y|}|^2).
\end{equation*}
In summary, the inequality (a) holds.

It is not difficult to see that (b) follows from (a);
indeed, taking the square root in (a) and applying
the elementary inequality $ (|a|^2+|b|^2)^\frac12 \leq |a| + |b| $ gives us
\begin{align*}
    \abs{K_0(w) + \log (z)} \leq C( 1 + |{\log|x-y|}|),
\end{align*}
so with \eqref{proof:logarithmic_identity} and the triangle inequality
\begin{align*}
     |{K_0(w)+\log(w)-\log(2)+\gamma}|
         &\leq
     |{K_0(w)+\log (z)}| + |{\log|x-y|}| + C \\
         &\leq
     C(1 + |{\log|x-y|}|) + |{\log|x-y|}| + C \\
         &=
     C(1 + |{\log|x-y|}|),
\end{align*}
which is the claim of (b).

Finally, to show (c), suppose that $|x-y| \leq |z|^{-\frac12}$, which is equivalent to
\begin{align} \label{proof:t}
 |w| \leq |z|^{\frac12}.
\end{align}
By rearranging \eqref{powerseries:K0} and using \eqref{identitiy:log(z)} (we have $ |{\arg{w}}| < \pi/2 $) one verifies
\begin{align*}
    |{K_0(w) + \log(w) - \log(2) + \gamma}|
        \leq
    C|{w^2 \log(w)}|
        =
    C\abs{\frac{w^2}{2} \log(w^2)}
            \leq
    C|w|^2 |{\log(w^2)}|
\end{align*}
and hence with \eqref{ineq:log(z)} (note that $ |w|^2 \leq |z| < 1/2 $ by \eqref{proof:t})
\begin{align*}
    |K_0(w) + \log(w) - \log(2) + \gamma|
        \leq
    C |w|^2 |{\log|w|^2}|.
\end{align*}
Finally, a combination of \eqref{proof:t} and the fact that $ (0,1) \ni x \mapsto x^2 \left( |{\log(x^2)}| + 1 \right) $ is monotonously increasing yields
\begin{align*}
    |K_0(w) + \log(w) - \log(2) + \gamma|
        &\leq
        C |w|^2 \left( |{\log|w|^2}| + 1 \right) \\
            &\leq
        C |z| \left( |{\log|z|}| + 1 \right) \\
            &\leq
        C |z| |{\log|z|}|  \\
            &\leq
        C|z||{\log(z)}|,
\end{align*}
where we again used \eqref{ineq:log(z)} for the last inequality;
(c) is shown. \qedhere
\end{proof}

\subsection{Eigenvalues of operator functions} \label{section:eigenvalues_operator_functions}

This section follows the presentation of \cite[Chap.~XI.9]{Gohberg}.
We assume that $ \calH $ and $ \calG $ are Hilbert spaces and that $ \Omega \subset \C $ is a non-empty, open, and connected set.

For an analytic operator function $ W : \Omega \rightarrow \mathcal{L}(\calH) $
we call $ z_0 \in \Omega $ an $ \textit{eigenvalue} $ of $ W $ if
$
    \ker{W(z_0)} \neq \lbrace 0 \rbrace.
$
For an eigenvalue $ z_0 \in \Omega $ the quantity
$ \dim\ker W(z_0) $ is called the \textit{geometric multiplicity} of $ W $ at $ z_0 $.
If $ z_0 \in \Omega $ is an eigenvalue of $W$ and, in addition,
 $ W(z_0) $ is Fredholm (that is, $ \ran{ W(z_0) } $ is closed and both
    $ \dim \ker{W(z_0)} $ and $ \codim\ran W(z_0) $
    are finite) and $ W(z) $ is boundedly invertible for all
    $ 0 < \abs[]{z-z_0} < \epsilon $ for some $ \epsilon > 0 $, then
 $ z_0 $ is said to be an \textit{eigenvalue of finite type} of $ W $.
If $ z_0 $ is an eigenvalue of finite type of $ W $, then by \cite[Thm.~XI.8.1]{Gohberg} $ W(z) $ admits a factorization
\begin{align} \label{operator_function_equivalence}
    W(z) = E(z) \left( P_0 + (z-z_0)^{\kappa_1} P_1 + \ldots + (z-z_0)^{\kappa_r} P_r \right) F(z)
\end{align}
in a small neighborhood $ D $ of $ z_0 $, where $ E : D \rightarrow \calL(\calH) $ and $ F : D \rightarrow \calL(\calH) $ are analytic operator functions,
    boundedly invertible on $ D $,
    $$ 1 \leq \kappa_1 \leq \kappa_2 \leq \ldots \leq \kappa_r,$$
    are positive integers, $ P_1, \ldots, P_r $ are mutually disjoint projections of rank one, and
    $$ P_0 = I_{\calH} - P_1 - \ldots - P_r;$$
here $ I_\calH$ denotes the identity operator in  $ \calH $.
The sum $ \kappa_1 + \ldots + \kappa_r $
is called the \textit{algebraic multiplicity} of
$ W $ at $ z_0 $, and will be denoted by
$ m_a(z_0; W) $.

\begin{remark} \label{remark:algebraic_multiplicity_root}
Observe that in the scalar case, that is, $W : \Omega \rightarrow \C $ is a complex-valued
holomorphic function, the notions above simplify accordingly. More precisely, eigenvalues are zeros of $W$ and each eigenvalue
is an eigenvalue of finite type, its geometric multiplicity is one, and its algebraic multiplicity
coincides with the
multiplicity of the zero of $W$.
\end{remark}

It follows from the factorization \eqref{operator_function_equivalence} that the eigenvalues of finite type  of an operator function
and their respective algebraic multiplicities are invariant under multiplications by boundedly invertible operator functions.

\begin{lemma} \label{lemma:invertible_operator_eigenvalues}
Let $ W : \Omega \rightarrow \calL(\calH) $ and $ R : \Omega \rightarrow \calL(\calH) $
be analytic operator functions, and suppose that $ R(z) $ is boundedly invertible for all
$ z \in \Omega $.
Then $ z_0 \in \Omega $ is an eigenvalue of finite type of $ W $ if and only if
it is an eigenvalue of finite type of $ R W $ and $ W R $, in which case
\begin{align*}
    m_a(z_0; W)
        =
    m_a(z_0; RW)
    		=
    	m_a(z_0; WR).
\end{align*}
\end{lemma}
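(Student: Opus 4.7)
The plan is to reduce everything to the factorization \eqref{operator_function_equivalence} that defines the algebraic multiplicity. It suffices to treat $RW$, since the argument for $WR$ is symmetric, and to treat only one implication, since the other then follows by replacing $(R,W)$ by $(R^{-1},RW)$; note that $R^{-1}$ is well-defined, analytic, and boundedly invertible on $\Omega$ because $R$ is.

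Suppose therefore that $z_0\in\Omega$ is an eigenvalue of finite type of $W$. By \cite[Thm.~XI.8.1]{Gohberg} there exist an open neighborhood $D$ of $z_0$, analytic operator functions $E,F:D\to\mathcal{L}(\mathcal{H})$ that are boundedly invertible on $D$, mutually disjoint rank-one projections $P_1,\dots,P_r$ with $P_0=I_\mathcal{H}-P_1-\cdots-P_r$, and positive integers $1\leq\kappa_1\leq\cdots\leq\kappa_r$ such that
\begin{align*}
W(z) = E(z)\bigl(P_0 + (z-z_0)^{\kappa_1} P_1 + \cdots + (z-z_0)^{\kappa_r} P_r\bigr)F(z),\qquad z\in D,
\end{align*}
with $m_a(z_0;W)=\kappa_1+\cdots+\kappa_r$. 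Multiplying by $R$ on the left yields
\begin{align*}
(RW)(z) = \widetilde{E}(z)\bigl(P_0 + (z-z_0)^{\kappa_1} P_1 + \cdots + (z-z_0)^{\kappa_r} P_r\bigr)F(z),\qquad z\in D,
\end{align*}
where $\widetilde{E}(z):=R(z)E(z)$ is analytic on $D$ and boundedly invertible on $D$ as a product of two boundedly invertible analytic operator functions. Hence this is again a factorization of the form \eqref{operator_function_equivalence}, which directly shows that $z_0$ is an eigenvalue of finite type of $RW$ and that $m_a(z_0;RW)=\kappa_1+\cdots+\kappa_r=m_a(z_0;W)$. The case of $WR$ is identical, this time absorbing $R$ into the factor on the right via $\widetilde{F}(z):=F(z)R(z)$.

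The main point to be careful with is that we do not have to separately verify the defining properties of an eigenvalue of finite type (nontrivial kernel at $z_0$, Fredholmness, bounded invertibility in a punctured neighborhood) for $RW$ and $WR$: any factorization of the shape \eqref{operator_function_equivalence} with $E,F$ analytic and boundedly invertible automatically encodes all of these, and \cite[Thm.~XI.8.1]{Gohberg} also guarantees that $m_a$ read off from such a factorization is independent of the particular choice. The only mild obstacle is ensuring that $R^{-1}$ (used for the converse implication) is analytic on $\Omega$, but this is immediate from the identity $R(z)^{-1}-R(z_0)^{-1}=-R(z)^{-1}(R(z)-R(z_0))R(z_0)^{-1}$ together with the analyticity of $R$.
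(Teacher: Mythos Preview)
Your proof is correct and follows exactly the approach the paper indicates: the paper does not spell out a proof of this lemma but simply remarks, immediately before stating it, that ``It follows from the factorization \eqref{operator_function_equivalence} that the eigenvalues of finite type of an operator function and their respective algebraic multiplicities are invariant under multiplications by boundedly invertible operator functions.'' Your argument is the natural elaboration of that sentence --- absorb $R$ into one of the outer invertible factors $E$ or $F$ and read off the unchanged partial multiplicities --- so there is no meaningful difference between your approach and the paper's.
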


In the following lemma $ I_\calH$ and $I_\calG$ denote the identity operators in the Hilbert spaces $ \calH $
and $\calG$, respectively. It turns out that operator functions of the special form
$ I_\calG + AB $ and $ I_\calH + BA $ have the same eigenvalues of finite type. 
For the proof we make use of Fredholm determinants; for a definition and basic properties of the
latter we refer to \cite{SimonTraceIdeals} and 
for further details to \cite[App.~C]{GesztesyTraces}.

\begin{lemma} \label{lemma:algebraic_multiplicity_A_B}
Let $ A : \Omega \rightarrow \calL(\calH,\calG) $ and $ B : \Omega \rightarrow \calL(\calG, \calH) $ be
analytic families of Hilbert-Schmidt operators. 
Then $ z_0\in\Omega $ is an eigenvalue of finite of $ I_\calG + AB $ if and only
if it is an eigenvalue of finite type of $ I_\calH + BA $, in which case
\begin{align*}
    m_a(z_0; I_\calG + AB) = m_a(z_0; I_\calH + BA).
\end{align*}
\end{lemma}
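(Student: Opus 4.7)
The plan is to reduce the statement to a scalar identity for Fredholm determinants. Since $A(\cdot)$ and $B(\cdot)$ are analytic families of Hilbert-Schmidt operators, the products $A(z)B(z) \in \calL(\calG)$ and $B(z)A(z) \in \calL(\calH)$ are trace class and depend analytically on $z \in \Omega$ with respect to the trace norm. Consequently, the scalar Fredholm determinants
\[
D_\calG(z) := {\det}_\calG\bigl(I_\calG + A(z)B(z)\bigr),
\qquad
D_\calH(z) := {\det}_\calH\bigl(I_\calH + B(z)A(z)\bigr)
\]
are well-defined and holomorphic on $\Omega$.

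The first step is to invoke the classical commutation identity $D_\calG(z) = D_\calH(z)$ for every $z \in \Omega$, which holds in this general Hilbert-Schmidt setting; see \cite[Thm.~3.5]{SimonTraceIdeals} or \cite[App.~C]{GesztesyTraces}. This reduces the problem to comparing the eigenvalues of finite type of $I_\calG + AB$ and $I_\calH + BA$ with the zeros of a single scalar holomorphic function.

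The second step uses the following standard characterization: whenever $W : \Omega \to \calL(\mathcal{K})$ has the form $W(z) = I_\mathcal{K} + K(z)$ with $K$ an analytic trace class-valued family, then $z_0$ is an eigenvalue of finite type of $W$ if and only if ${\det}_\mathcal{K} W(z_0) = 0$, and in that case
\[
m_a(z_0; W) = \mathrm{ord}_{z_0}\bigl({\det}_\mathcal{K} W\bigr).
\]
Granted this, applying the characterization to both $I_\calG + AB$ and $I_\calH + BA$ and combining it with $D_\calG = D_\calH$ yields simultaneously the equivalence of being an eigenvalue of finite type and the equality of the algebraic multiplicities, proving the lemma.

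The main obstacle is the justification of the characterization in the second step. This proceeds by inserting the local factorization \eqref{operator_function_equivalence} into ${\det}_\mathcal{K}(I_\mathcal{K} + K(z))$: since $E(z)$ and $F(z)$ are holomorphic and boundedly invertible near $z_0$, a short trace class computation shows that they contribute a nowhere-vanishing holomorphic factor to the determinant, while the middle factor $P_0 + \sum_{j=1}^{r}(z-z_0)^{\kappa_j} P_j$ (a finite-rank perturbation of $I_\mathcal{K}$) contributes exactly $\prod_{j=1}^{r}(z-z_0)^{\kappa_j}$, so that $\mathrm{ord}_{z_0}({\det}_\mathcal{K} W) = \kappa_1 + \cdots + \kappa_r = m_a(z_0; W)$. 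This is classical and carried out in detail in \cite[Chap.~XI]{Gohberg} and \cite[App.~C]{GesztesyTraces}, and at this point of our paper it suffices to invoke it.
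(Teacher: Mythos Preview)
Your proof is correct and follows essentially the same approach as the paper: both arguments observe that $A(z)B(z)$ and $B(z)A(z)$ are trace class, invoke the Fredholm determinant identity $\det(I_\calG + AB) = \det(I_\calH + BA)$, and then use that the zeros of the determinant coincide (including multiplicities) with the eigenvalues of finite type of the corresponding operator function. The paper's proof is slightly more terse, simply citing this last correspondence rather than sketching its derivation from the factorization \eqref{operator_function_equivalence} as you do.
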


\begin{proof}
Recall that for an analytic family $ W : \Omega \rightarrow \calL(\calH) $ of trace class operators
the Fredholm determinant
\begin{align} \label{det}
	\Omega\ni z \mapsto \det(I_\calH + W(z))
\end{align}
is an analytic function. Moreover, the zeros of \eqref{det} are
exactly the eigenvalues of finite type of $ I_\calH + W $ and their respective algebraic
multiplicities coincide with their multiplicity as a root of $ {\det(I_\calH+W)} $. Hence,
the claim follows from the well-known identity (see, e.g., \cite[Thm.~C.4.4(iv)]{GesztesyTraces}
\begin{align*}
	\det(I_\calG + A(z)B(z)) = \det(I_\calH + B(z)A(z)),
	\quad z \in \Omega,
\end{align*}
which is satisfied since by assumption $ A(z)B(z) $ and $ B(z)A(z) $ are trace class for every $ z \in \Omega $;
cf.~\cite[Satz~3.23]{WeidmannLineareOperatorenTeil1}.
\end{proof}

\subsection{Useful tools from complex analysis}

We collect useful results from complex analysis. The first lemma concerns the
 multiplicity of roots under substitutions.

\begin{lemma} \label{lemma:multiplicity}
Let $ \Omega_1, \Omega_2 \subset \C $ be open, let $ h : \Omega_1 \rightarrow \C $ be holomorphic, let
$ {\Psi : \Omega_2 \rightarrow \Omega_1} $ be holomorphic and bijective, and assume that
$ h(\Psi(w_0)) = 0 $ for some $ w_0 \in \Omega_2 $.
Then the multiplicity of $ \Psi(w_0) $ as a root of the function $ h $ coincides with the multiplicity of $ w_0 $ as a root of the function $ h \circ \Psi $.
\end{lemma}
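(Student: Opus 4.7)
The plan is to exploit the local factorization of holomorphic functions around a zero together with the fact that a holomorphic bijection has nowhere vanishing derivative. Set $z_0 := \Psi(w_0)$ and let $m \geq 1$ denote the multiplicity of $z_0$ as a root of $h$. By definition, on a small neighborhood $U \subset \Omega_1$ of $z_0$ one can write
\begin{equation*}
	h(z) = (z-z_0)^m g(z),
\end{equation*}
where $g : U \to \C$ is holomorphic and $g(z_0) \neq 0$.

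Next, I would invoke the key fact that $\Psi'(w_0) \neq 0$. This follows from a standard result in complex analysis: if a holomorphic function has a zero of order $k \geq 2$ in its derivative at some point, then it is locally $(k+1)$-to-one near that point, which would contradict the injectivity of $\Psi$. Therefore $\Psi'(w) \neq 0$ for every $w \in \Omega_2$, and, in particular, $\Psi'(w_0) \neq 0$. Consequently, on a sufficiently small neighborhood $V \subset \Omega_2$ of $w_0$ with $\Psi(V) \subset U$, one can factorize
\begin{equation*}
	\Psi(w) - \Psi(w_0) = (w - w_0) \widetilde{g}(w),
\end{equation*}
where $\widetilde{g} : V \to \C$ is holomorphic with $\widetilde{g}(w_0) = \Psi'(w_0) \neq 0$.

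Combining the two factorizations yields, for all $w \in V$,
\begin{equation*}
	h(\Psi(w)) = \bigl( \Psi(w) - \Psi(w_0) \bigr)^m g(\Psi(w)) = (w-w_0)^m \, \widetilde{g}(w)^m\, g(\Psi(w)).
\end{equation*}
The function $w \mapsto \widetilde{g}(w)^m\, g(\Psi(w))$ is holomorphic on $V$ and its value at $w_0$ equals $\Psi'(w_0)^m\, g(z_0) \neq 0$. By the definition of root multiplicity, this shows that $w_0$ is a root of $h \circ \Psi$ of multiplicity exactly $m$, which is the claim.

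The only non-trivial step is the justification that $\Psi'(w_0) \neq 0$; everything else is a direct computation. One can alternatively bypass this by invoking the holomorphicity of the inverse map $\Psi^{-1}$ (which is a classical consequence of the bijectivity and holomorphicity of $\Psi$) and running a symmetric argument to obtain both inequalities between the multiplicities.
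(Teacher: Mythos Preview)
Your proof is correct and takes a genuinely different route from the paper. The paper uses the argument principle: it writes the multiplicity of $w_0$ as a root of $h\circ\Psi$ as the contour integral $\frac{1}{2\pi i}\int_\gamma (h\circ\Psi)'/(h\circ\Psi)\,\mathrm{d}w$ over a small circle $\gamma$ around $w_0$, applies the chain rule and the substitution $z=\Psi(w)$, and recognizes the result as $\frac{1}{2\pi i}\int_{\Psi(\gamma)} h'/h\,\mathrm{d}z$, which equals the multiplicity of $\Psi(w_0)$ as a root of $h$; here the bijectivity of $\Psi$ is used only to ensure that $\Psi(\gamma)$ is again a simple positively oriented curve around $\Psi(w_0)$.

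Your argument instead uses the local factorization of a holomorphic function at a zero together with the fact that a holomorphic bijection has nonvanishing derivative. This is arguably more elementary, since it avoids contour integration entirely and works directly from the definition of multiplicity. The paper's approach, on the other hand, is a one-line computation once the argument principle is granted, and it packages the role of bijectivity slightly differently (orientation and simplicity of the image curve rather than $\Psi'(w_0)\neq 0$). Both arguments are standard and equally rigorous; the choice is a matter of taste.
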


\begin{proof}
Let $ \Gamma = \lbrace w \in \C : |w-w_0| = r \rbrace $, where $ r > 0 $ is chosen such that
$ \Gamma $ and its interior lie in $ \Omega_2 $, and suppose that $ \gamma $ is a simple, positively oriented parametrization of $ \Gamma $.
The argument principle for holomorphic functions then implies
\begin{align*}
    m_a(w_0; h\circ\Psi)
        &=
    \frac{1}{2\pi i} \int_{\gamma} \frac{(h \circ \Psi)'(w)}{(h \circ \Psi)(w)} \dmath w
        =
    \frac{1}{2\pi i} \int_{\gamma} \frac{h'(\Phi(w))\Psi'(w)}{h(\Psi(w))} \dmath w \\
        &=
    \frac{1}{2\pi i} \int_{\Psi(\gamma)} \frac{h'(z)}{h(z)} \dmath z
        =
    m_a(\Psi(w_0); h),
\end{align*}
where in the last equality we used that $ \Psi(\gamma) $ is a simple, smooth and positively oriented curve around $ \Psi(w_0) $;
this, in turn, holds as
$ \Psi $ is assumed to be holomorphic and bijective.
\end{proof}

The following lemma, based on Rouche's theorem, is an important ingredient
to find zeros of the  function $\fbeta{d} \circ \Phi $ in Proposition~\ref{prop:equivalence}.

\begin{lemma} \label{lemma:rouche_eigenvalue_abstract}
Let $ a \in \C \setminus \lbrace 0 \rbrace $, let $\calO_a$ be an open neighborhood of $a$ such that
$0\not\in\calO_a$, let $ b:\calO_a\rightarrow\C $, and consider the function
\begin{align} \label{f_eps_rouche}
    h(w) = 1 - \frac{1}{w} (a + b(w)),\quad w\in\calO_a.
\end{align}
If for some $\delta \in(0, \tfrac 1{12})$ the following conditions hold,
\begin{enumerate}
    \item $ \disc{a}{4|a|\delta} \subset \calO_a$,
    \item $ b $ is holomorphic on $ \disc{a}{4|a|\delta} $,
    \item $ |b(w)| < \delta |a| $ for all $ w \in \disc{a}{4|a|\delta} $,
\end{enumerate}
then the function $ h $ has exactly one root in the disc $ \disc{a}{|a|\delta} $ and this root is simple.
% and
%\begin{align*}
%    w = a (1 + \calO(\delta)),
%    \quad {\delta\to 0+}.
%\end{align*}
\end{lemma}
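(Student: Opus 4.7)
The statement reduces to a direct application of Rouché's theorem. The plan is to rewrite the zero problem for $h$ in a form where the relevant linear model has a single simple zero at $w=a$, and then verify that the perturbation is strictly dominated on the boundary of the smaller disc.

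First I would multiply through by $w$. Since the hypothesis $\delta < 1/12$ yields $4|a|\delta < |a|/3 < |a|$, the closed disc $\overline{\disc{a}{4|a|\delta}}$ does not contain the origin; together with assumptions (i) and (ii) this makes the function
\begin{equation*}
 F(w) := w\, h(w) = (w - a) - b(w)
\end{equation*}
holomorphic on $\disc{a}{4|a|\delta}$. Moreover, since $w \neq 0$ throughout this disc, a zero $w_0$ of $h$ coincides with a zero of $F$, and the two zero orders agree. Hence it suffices to count zeros of $F$ in $\disc{a}{|a|\delta}$.

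Next I would apply Rouché's theorem to the splitting $F = f_1 + f_2$ with $f_1(w) = w - a$ and $f_2(w) = -b(w)$ on the boundary circle $\partial\disc{a}{|a|\delta}$. There one has $|f_1(w)| = |a|\delta$ identically, while assumption (iii) gives the strict inequality $|f_2(w)| = |b(w)| < \delta |a| = |f_1(w)|$ for every $w \in \overline{\disc{a}{|a|\delta}} \subset \disc{a}{4|a|\delta}$. Thus Rouché's theorem yields that $F$ and $f_1$ have the same number of zeros in $\disc{a}{|a|\delta}$ counted with multiplicity. Since $f_1$ has exactly one zero there, namely a simple zero at $w = a$, the function $F$ (and hence $h$) has exactly one zero in $\disc{a}{|a|\delta}$, which is simple.

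There is no real obstacle in this argument; the only minor points to check carefully are that $0 \notin \overline{\disc{a}{|a|\delta}}$ (so that the multiplication by $w$ is a bijection between the zero sets and preserves multiplicities) and that the inequality between $|b(w)|$ and $|w-a|$ is strict on the boundary. Both are immediate from $\delta<1/12$ and from assumption (iii) together with the fact that the bound on $b$ is assumed on the larger disc $\disc{a}{4|a|\delta}$, which comfortably contains the closed disc on which we invoke Rouché.
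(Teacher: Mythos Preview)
Your argument is correct and noticeably more direct than the paper's. The paper instead performs a change of variable $\Psi(\alpha)=a/(1-\alpha)$, applies Rouch\'e's theorem to $h\circ\Psi$ against $g(\alpha)=-\alpha$ on $\partial\disc{0}{2\delta}$, invokes a separate lemma on preservation of root multiplicities under biholomorphic substitutions, and then carries out an additional uniqueness argument to confirm that the resulting zero actually lies in the smaller disc $\disc{a}{|a|\delta}$ and is the only one there. Your idea of simply multiplying through by $w$ to obtain $F(w)=(w-a)-b(w)$ and comparing directly with $w\mapsto w-a$ on $\partial\disc{a}{|a|\delta}$ avoids all of that machinery: the single Rouch\'e estimate $|b(w)|<\delta|a|=|w-a|$ on the boundary simultaneously establishes existence, uniqueness, and simplicity in the target disc. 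The paper's route is not wrong, but your approach is the cleaner one here; the only advantage of the paper's parametrization is that it might generalize more readily to situations where $h$ does not factor so conveniently.
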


\begin{proof}
We start by proving the existence of a root, which is essentially a consequence of Rouche's theorem.
Consider the mapping
\begin{align*}
    \Psi: \disc{0}{3\delta} \rightarrow \C,
    \quad
    \alpha \mapsto \Psi(\alpha)=\frac{a}{1-\alpha};
\end{align*}
since $|\alpha|< 1/4$, $\Psi$ is well-defined, injective, and holomorphic.
Moreover, $|\alpha|< 1/4$ also implies
\begin{align*}
    |a - \Psi(\alpha)|
        =
    \abs{a - \frac{a}{1-\alpha}}
        \leq
    |a| \frac{|\alpha|}{1 - |\alpha|}
        <
	|a| \frac{3 \delta}{1 - \frac{1}{4}}
        =
    4 |a| \delta,
\end{align*}
which shows that $ \Psi $ maps $ \disc{0}{3\delta} $ into $ \disc{a}{4|a|\delta} $.
In particular, the function $f=h \circ \Psi $ is holomorphic on $ \disc{0}{3\delta} $ due to assumption (i) and (ii).
A direct computation shows
\begin{align*}
    f(\alpha)=h(\Psi(\alpha))
        =
     1 - \frac{1}{\Psi(\alpha)} (a + b(\Psi(\alpha)))
        =
    \alpha - (1 - \alpha) \frac{b(\Psi(\alpha))}{a},\quad\alpha\in\disc{0}{3\delta} ,
\end{align*}
and setting $ g(\alpha) = -\alpha $ we obtain for $ \alpha \in \partial\disc{0}{2\delta} $ the estimate
\begin{align*}
    |f(\alpha)+g(\alpha)|
        \leq
    |1 - \alpha| \frac{|b(\Psi(\alpha))|}{|a|}
        <
    (1 + 2\delta)\delta
        <
    2\delta
        =
    |\alpha|\leq|f(\alpha)|+|g(\alpha)|,
\end{align*}
where we used assumption (iii) for the second inequality and $2\delta<1/6$ for the third inequality.
Therefore, we can apply
Rouche's theorem (see, e.g., \cite[Chap.~V, Thm.~3.8]{Conway}) to the functions $ f =h \circ \Psi$
and $ g $ on the disc
$ \disc{0}{2 \delta} $, and conclude that $ f= h \circ \Psi $ has exactly one zero $ \alpha_\delta $ in $ \disc{0}{2\delta} $,
which, in addition, is simple. Therefore, it follows that $ w_\delta := \Psi(\alpha_\delta) $ is a zero
of $ h $ in $ \disc{a}{4|a|\delta} $. Since the function
$ \Psi : \disc{0}{3\delta} \rightarrow \disc{a}{4|a|\delta} $ is holomorphic and injective,
thus bijective onto its range, $ w_\delta $ is a simple zero of $ h $ by Lemma~\ref{lemma:multiplicity}.

It remains to prove that $ w_\delta $ belongs to the smaller disc $ \disc{a}{|a|\delta} $ and that it
is the only zero of $ h $ inside this disc.
To see the first claim, note that $ h(w_\delta) = 0 $ and assumption (iii) imply
\begin{align*}
    |a - w_\delta|
        =
    |b(w_\delta)|
        <
    \delta |a|.
\end{align*}
To show that $ w_\delta $ is the only zero of $h$ in $ \disc{a}{|a|\delta} $ consider
$ \widetilde{w} \in \disc{a}{|a|\delta} $
such that $ h(\widetilde  w)=0 $ and set
\begin{align*}
    \widetilde{\alpha} := \frac{\widetilde{w} - a}{\widetilde{w}}.
\end{align*}
Then we have $ \Psi(\widetilde{\alpha}) = \widetilde{w} $ and
\begin{equation*}
|\widetilde{\alpha}|=\frac{|\widetilde{w} - a|}{|\widetilde{w}|}\leq \frac{|a|\delta}{|a|-|a|\delta}
=\frac{\delta}{1-\delta}<2\delta,
\end{equation*}
that is, $\tilde{\alpha} \in \disc{0}{2\delta}$.
Moreover, since $ \widetilde{w} $ is a root of $ h $, it follows that $ \widetilde{\alpha} $ is a root of $ h \circ \Psi $.
However, since $ \alpha_\delta $ is the only zero of $ h \circ \Psi $ in $ \disc{0}{2\delta} $, this implies
$ \widetilde{\alpha} = \alpha_\delta $ and thus $ \widetilde{w} = w_{\delta} $.
\end{proof}

\bibliography{../literatur}

\begin{thebibliography}{10}

\bibitem{AbramovAslanyianDavies2001}
{\sc Abramov, A.~A., Aslanyan, A., and Davies, E.~B.}
\newblock Bounds on complex eigenvalues and resonances.
\newblock {\em J. Phys. A 34\/} (2001), 57--72.

\bibitem{AbramowitzStegun}
{\sc Abramowitz, M., and Stegun, I.~A.}
\newblock {\em Handbook of Mathematical Functions with Formulas, Graphs, and
  Mathematical Tables}.
\newblock Dover Publications: New York, 1972.

\bibitem{Almog-2016-48}
{\sc Almog, Y., and Henry, R.}
\newblock {Spectral analysis of a complex Schr\"{o}dinger operator in the
  semiclassical limit}.
\newblock {\em SIAM J. Math. Anal. 48\/} (2016), 2962--2993.

\bibitem{Blankenbecler1977}
{\sc Blankenbecler, R., Goldberger, M.~L., and Simon, B.}
\newblock The {B}ound {S}tates of {W}eakly {C}oupled {L}ong-{R}ange
  {O}ne-{D}imensional {Q}uantum {H}amiltonians.
\newblock {\em Ann. Physics 108\/} (1977), 69--78.

\bibitem{Boegli-2016-352}
{\sc B{\"o}gli, S.}
\newblock {Schrödinger Operator with Non-Zero Accumulation Points of Complex
  Eigenvalues}.
\newblock {\em Comm. Math. Phys. 352\/} (2016), 629--639.

\bibitem{Boegli-2022-398}
{\sc B{\"o}gli, S., and Cuenin, J.-C.}
\newblock {Counterexample to the Laptev--Safronov Conjecture}.
\newblock {\em Comm. Math. Phys. 398\/} (2022), 1349--1370.

\bibitem{Boegli-2021-11}
{\sc B{\"o}gli, S., and {\v S}tampach, F.}
\newblock {On Lieb–Thirring inequalities for one-dimensional non-self-adjoint
  Jacobi and Schr\" odinger operators}.
\newblock {\em J. Spectr. Theory 11\/} (2021), 1391--1413.

\bibitem{Borisov-2008-62}
{\sc Borisov, D., and Krej{\v{c}}i{\v{r}}{\'i}k, D.}
\newblock {PT-symmetric waveguides}.
\newblock {\em Integral Equations Operator Theory 62\/} (2008), 489--515.

\bibitem{BullaGesztesyRengerSimon1997}
{\sc Bulla, W., Gesztesy, F., Renger, W., and Simon, B.}
\newblock Weakly coupled bound states in quantum waveguides.
\newblock {\em Proc. Amer. Math. Soc. 125\/} (1997), 1487--1495.

\bibitem{Conway}
{\sc Conway, J.~B.}
\newblock {\em Functions of {O}ne {C}omplex {V}ariable {I}}.
\newblock Springer New York, NY, 1978.

\bibitem{Cossetti-2019-379}
{\sc Cossetti, L., Fanelli, L., and Krej{\v c}i{\v r}{\' i}k, D.}
\newblock {Absence of eigenvalues of Dirac and Pauli Hamiltonians via the
  method of multipliers}.
\newblock {\em Comm. Math. Phys. 379\/} (2019), 633--691.

\bibitem{Cuenin-2019-376}
{\sc Cuenin, J.-C.}
\newblock {Improved Eigenvalue Bounds for Schr\"odinger Operators with Slowly
  Decaying Potentials}.
\newblock {\em Comm. Math. Phys. 376\/} (2019), 2147--2160.

\bibitem{Cuenin-2022-392}
{\sc Cuenin, J.-C.}
\newblock {Schr\"odinger Operators with Complex Sparse Potentials}.
\newblock {\em Comm. Math. Phys. 392\/} (2022), 951--992.

\bibitem{CueninMerz2021}
{\sc Cuenin, J.-C., and Merz, K.}
\newblock Weak coupling limit for {S}chrödinger-type operators with degenerate
  kinetic energy for a large class of potentials.
\newblock {\em Lett. Math. Phys. 111\/} (2021), 29pp.

\bibitem{CueninSiegl2018}
{\sc Cuenin, J.-C., and Siegl, P.}
\newblock Eigenvalues of one-dimensional non-self-adjoint {D}irac operators and
  applications.
\newblock {\em Lett. Math. Phys. 108\/} (2018), 1757--1778.

\bibitem{Davies-2007}
{\sc Davies, E.~B.}
\newblock {\em {Linear operators and their spectra}}.
\newblock Cambridge University Press, 2007.

\bibitem{Davies-2002-148}
{\sc Davies, E.~B., and Nath, J.}
\newblock {Schr\"odinger operators with slowly decaying potentials}.
\newblock {\em J. Comput. Appl. Math. 148\/} (2002), 1--28.

\bibitem{Demuth-2009-257}
{\sc Demuth, M., Hansmann, M., and Katriel, G.}
\newblock {On the discrete spectrum of non-selfadjoint operators}.
\newblock {\em J. Funct. Anal. 257\/} (2009), 2742--2759.

\bibitem{Demuth-2013-232}
{\sc Demuth, M., Hansmann, M., and Katriel, G.}
\newblock {Eigenvalues of non-selfadjoint operators: a comparison of two
  approaches}.
\newblock In {\em Mathematical physics, spectral theory and stochastic
  analysis}, vol.~232 of {\em Oper. Theory Adv. Appl.} Birkh\"auser, Basel,
  2013, pp.~107--163.

\bibitem{DS3}
{\sc Dunford, N., and Schwartz, J.~T.}
\newblock {\em {Linear Operators, Part 3}}.
\newblock John Wiley \& Sons, Inc., New York, 1988.

\bibitem{EdmundsEvans1987}
{\sc Edmunds, D.~E., and Evans, W.}
\newblock {\em Spectral Theory and Differential Operators}.
\newblock Clarendon Press Oxford, 1987.

\bibitem{Enblom-2015-106}
{\sc Enblom, A.}
\newblock {Estimates for Eigenvalues of Schrödinger Operators with
  Complex-Valued Potentials}.
\newblock {\em Lett. Math. Phys. 106\/} (2015), 197--220.

\bibitem{ExnerKondejLotoreichik2018}
{\sc Exner, P., Kondej, S., and Lotoreichik, V.}
\newblock Asymptotics of the bound state induced by $\delta$-interaction
  supported on a weakly deformed plane.
\newblock {\em J. Math. Phys. 59\/} (2018), 17pp.

\bibitem{ExnerKondejLotoreichik2024}
{\sc Exner, P., Kondej, S., and Lotoreichik, V.}
\newblock Bound states of weakly deformed soft waveguides.
\newblock {\em Asymptot. Anal. 138\/} (2024), 151--174.

\bibitem{Fanelli-2018-8}
{\sc Fanelli, L., Krej{\v{c}}i{\v{r}}{\'{i}}k, D., and Vega, L.}
\newblock {Spectral stability of Schr\"odinger operators with subordinated
  complex potentials}.
\newblock {\em J. Spectr. Theory 8\/} (2018), 575--604.

\bibitem{Fanelli-2018-275}
{\sc Fanelli, L., Krejčiřík, D., and Vega, L.}
\newblock {Absence of eigenvalues of two-dimensional magnetic Schrödinger
  operators}.
\newblock {\em J. Funct. Anal. 275\/} (2018), 2453--2472.

\bibitem{FassariKlaus1998}
{\sc Fassari, S., and Klaus, M.}
\newblock Coupling constant thresholds of perturbed periodic {H}amiltonians.
\newblock {\em J. Math. Phys. 39\/} (1998), 4369--4416.

\bibitem{Faupin-2023-6}
{\sc Faupin, J., and Frantz, N.}
\newblock {Spectral decomposition of some non-self-adjoint operators}.
\newblock {\em Ann. H. Lebesgue 6\/} (2023), 1115--1167.

\bibitem{Fialova-2025-37}
{\sc Fialová, M., and Krejčiřík, D.}
\newblock {Virtual bound states of the Pauli operator with an Aharonov-Bohm
  potential}.
\newblock {\em Rev. Math. Phys. 37\/} (2025).

\bibitem{Frank2011}
{\sc Frank, R.~L.}
\newblock Eigenvalue bounds for {S}chrödinger operators with complex
  potentials.
\newblock {\em Bull. London Math. Soc. 43\/} (2011), 745--750.

\bibitem{Frank2018}
{\sc Frank, R.~L.}
\newblock Eigenvalue bounds for {S}chrödinger operators with complex
  potentials. {III}.
\newblock {\em Trans. Amer. Math. Soc. 370\/} (2018), 219--240.

\bibitem{Frank-2006-77}
{\sc Frank, R.~L., Laptev, A., Lieb, E., and Seiringer, R.}
\newblock {Lieb-Thirring Inequalities for Schr\"odinger Operators with
  Complex-valued Potentials}.
\newblock {\em Lett. Math. Phys. 77\/} (2006), 309--316.

\bibitem{FrankMorozovVugalter2011}
{\sc Frank, R.~L., Morozov, S., and Vugalter, S.}
\newblock Weakly coupled bound states of {P}auli operators.
\newblock {\em Calc. Var. Partial Diff. Eq. 40\/} (2011), 253--271.

\bibitem{Frank2017}
{\sc Frank, R.~L., and Simon, B.}
\newblock Eigenvalue bounds for {S}chrödinger operators with complex
  potentials. {II}.
\newblock {\em J. Spectr. Theory 7\/} (2017), 633--658.

\bibitem{GesztesyHolden1987}
{\sc Gesztesy, F., and Holden, H.}
\newblock A unified approach to eigenvalues and resonances of {S}chrödinger
  operators using {F}redholm determinants.
\newblock {\em J. Math. Anal. Appl. 123\/} (1987), 181--198.

\bibitem{GesztesyTraces}
{\sc Gesztesy, F., Nichols, R., and Zinchenko, M.}
\newblock {\em Sturm-{L}iouville {O}perators, {T}heir {S}pectral {T}heory, and
  {S}ome {A}pplications}, vol.~67.
\newblock Amer. Math. Soc., 2024.

\bibitem{Gohberg}
{\sc Gohberg, I., Goldberg, S., and Kaashoek, M.~A.}
\newblock {\em Classes of Linear Operators Vol. I}.
\newblock Birkhäuser Verlag, 1990.

\bibitem{Hansmann-2011-98}
{\sc Hansmann, M.}
\newblock {An eigenvalue estimate and its application to non-selfadjoint
  {J}acobi and {S}chr\"odinger operators}.
\newblock {\em Lett. Math. Phys. 98\/} (2011), 79--95.

\bibitem{HansmannKrejcirik2022}
{\sc Hansmann, M., and Krejčiřík, D.}
\newblock The abstract {B}irman-{S}chwinger principle and spectral stability.
\newblock {\em J. Anal. Math. 148\/} (2022), 361–398.

\bibitem{Helffer-2013}
{\sc Helffer, B.}
\newblock {\em {Spectral theory and its applications}}.
\newblock Cambridge University Press, 2013.

\bibitem{HoangHundertmarkRichterVugalter2023}
{\sc Hoang, V., Hundertmark, D., Richter, J., and Vugalter, S.}
\newblock Quantitative bounds versus existence of weakly coupled bound states
  for {S}chrödinger type operators.
\newblock {\em Ann. Henri Poincaré 24\/} (2023), 783--842.

\bibitem{Holden1985}
{\sc Holden, H.}
\newblock On coupling constant thresholds in two dimensions.
\newblock {\em J. Operator Theory 14\/} (1985), 263--276.

\bibitem{Herau-2025-77}
{\sc Hérau, F., Krejčiřík, D., and Raymond, N.}
\newblock {Semiclassical asymptotics of the Bloch–Torrey operator in two
  dimensions}.
\newblock {\em J. Math. Soc. Japan 77\/} (2025).

\bibitem{IonescuJerison2003}
{\sc Ionescu, A.~D., and Jerison, D.}
\newblock On the absence of positive eigenvalues of {S}chrödinger operators
  with rough potentials.
\newblock {\em Geom. Funct. Anal. 13\/} (2003), 1029--1081.

\bibitem{Kato-1966-162}
{\sc Kato, T.}
\newblock {Wave operators and similarity for some non-selfadjoint operators}.
\newblock {\em Math. Ann. 162\/} (1966), 258--279.

\bibitem{Klaus1977}
{\sc Klaus, M.}
\newblock On the {B}ound {S}tate of {S}chrödinger {O}perators in {O}ne
  {D}imension.
\newblock {\em Ann. Physics 108\/} (1977), 288--300.

\bibitem{Klaus1979}
{\sc Klaus, M.}
\newblock A remark about weakly coupled {S}chrödinger operators.
\newblock {\em Helv. Phys. Acta 52\/} (1979), 223--229.

\bibitem{Klaus1982}
{\sc Klaus, M.}
\newblock Some applications of the {B}irman-{S}chwinger principle.
\newblock {\em Helv. Phys. Acta 55\/} (1982), 49--68.

\bibitem{KlausSimon1980}
{\sc Klaus, M., and Simon, B.}
\newblock Coupling constant thresholds in nonrelativistic quantum mechanics.
  {I}. {S}hort-range two-body case.
\newblock {\em Ann. Phys. 130\/} (1980), 251--281.

\bibitem{KochTataru2006}
{\sc Koch, H., and Tataru, D.}
\newblock Carleman estimates and abscence of embedded eigenvalues.
\newblock {\em Comm. Math. Phys. 267\/} (2006), 419--449.

\bibitem{KondejLotoreichik2014}
{\sc Kondej, S., and Lotoreichik, V.}
\newblock Weakly coupled bound state of 2-d {S}chrödinger operator with
  potential-measure.
\newblock {\em J. Math. Anal. Appl. 420\/} (2014), 1416--1438.

\bibitem{Lakaev1980}
{\sc Lakaev, S.~N.}
\newblock Discrete spectrum and resonances of a one-dimensional {S}chrödinger
  operator for small values of the coupling constants.
\newblock {\em Theoret. Math. Phys. 44\/} (1980), 810--814.

\bibitem{Laptev-2009-292}
{\sc Laptev, A., and Safronov, O.}
\newblock {Eigenvalue Estimates for Schr{\"o}dinger Operators with Complex
  Potentials}.
\newblock {\em Comm. Math. Phys. 292\/} (2009), 29--54.

\bibitem{Melgaard2002}
{\sc Melgaard, M.}
\newblock On bound states for systems of weakly coupled {S}chrödinger
  equations in one space dimension.
\newblock {\em J. Math. Phys. 43\/} (2002), 5365--5385.

\bibitem{MolchanovVainberg2023}
{\sc Molchanov, S., and Vainberg, B.}
\newblock Negative eigenvalues of non-local {S}chrödinger operators with
  sign-changing potentials.
\newblock {\em Proc. Amer. Math. Soc. 151\/} (2023), 4757--4770.

\bibitem{Novak-2016-96}
{\sc Nov{\'a}k, R.}
\newblock {Bound states in waveguides with complex Robin boundary conditions}.
\newblock {\em Asymptot. Anal. 96\/} (2016), 251--281.

\bibitem{Patil1980}
{\sc Patil, S.~H.}
\newblock T-matrix analysis of one-dimensional weakly coupled bound states.
\newblock {\em Phys. Rev. A 22\/} (1980), 1655--1663.

\bibitem{Patil1982}
{\sc Patil, S.~H.}
\newblock Wave functions for weakly-coupled bound states.
\newblock {\em Phys. Rev. A 25\/} (1982), 2467--2472.

\bibitem{Pavlov-1962-146}
{\sc Pavlov, B.~S.}
\newblock {On the spectral theory of non-selfadjoint differential operators}.
\newblock {\em Dokl. Akad. Nauk SSSR 146\/} (1962), 1267--1270.

\bibitem{Rauch1980}
{\sc Rauch, J.}
\newblock Perturbation theory for eigenvalues and resonances of {S}chrödinger
  {H}amiltonians.
\newblock {\em J. Funct. Anal. 35\/} (1980), 304--315.

\bibitem{ReedSimonMethodsIV}
{\sc Reed, M., and Simon, B.}
\newblock {\em Methods of Modern Mathematical Physics. IV: Analysis of
  Operators}.
\newblock Academic Press, New York, 1978.

\bibitem{Semoradova-2022-54}
{\sc Semor{\'a}dov{\'a}, I., and Siegl, P.}
\newblock {Diverging eigenvalues in domain truncations of Schr\"odinger
  operators with complex potentials}.
\newblock {\em SIAM J. Math. Anal. 54\/} (2022), 5064–5101.

\bibitem{Simon1976}
{\sc Simon, B.}
\newblock The {B}ound {S}tate of {W}eakly {C}oupled {S}chrödinger {O}perators
  in {O}ne and {T}wo {D}imensions.
\newblock {\em Ann. Physics 97\/} (1976), 279--288.

\bibitem{Simon1977}
{\sc Simon, B.}
\newblock On the absorption of eigenvalues by continuous spectrum in regular
  perturbation theory.
\newblock {\em J. Funct. Anal. 25\/} (1977), 338--344.

\bibitem{SimonTraceIdeals}
{\sc Simon, B.}
\newblock {\em Trace ideals and their applications}, 2nd~ed.
\newblock Amer. Math. Soc., Providence, 2005.

\bibitem{Sjoestrand-2019-14}
{\sc Sj{\"o}strand, J.}
\newblock {\em {Non-self-adjoint differential operators, spectral asymptotics
  and random perturbations}}, vol.~14 of {\em Pseudo-Differential Operators.
  Theory and Applications}.
\newblock Birkh\"{a}user/Springer, 2019.

\bibitem{Someyama-2019-83}
{\sc Someyama, N.}
\newblock {Number of Eigenvalues of Non-Self-Adjoint Schrödinger Operators
  with Dilation Analytic Complex Potentials}.
\newblock {\em Rep. Math. Phys. 83\/} (2019), 163--174.

\bibitem{Stepin-2014-89}
{\sc Stepin, S.~A.}
\newblock {An upper bound for the number of eigenvalues of a non-self-adjoint
  Schrödinger operator}.
\newblock {\em Dokl. Math. 89\/} (2014), 202--205.

\bibitem{Wang-2011-96}
{\sc Wang, X.~P.}
\newblock {Number of eigenvalues for dissipative Schrödinger operators under
  perturbation}.
\newblock {\em J. Math. Pures Appl. (9) 96\/} (2011), 409--422.

\bibitem{Weidl-1999-24}
{\sc Weidl, T.}
\newblock {Remarks on virtual bound states for semi-bounded operators}.
\newblock {\em Comm. Partial Differential Equations 24\/} (1999), 25--60.

\bibitem{WeidmannLineareOperatorenTeil1}
{\sc Weidmann, J.}
\newblock {\em Lineare Operatoren in Hilberträumen. Teil 1}.
\newblock Teubner Stuttgart, 2000.

\end{thebibliography}
\bibliographystyle{acm}

\end{document}